\documentclass[11pt]{amsart}
\usepackage{amssymb,amsfonts, color,epsf}
\usepackage [cmtip,arrow]{xy}
\xyoption{all}
\usepackage {pb-diagram,pb-xy}
\usepackage{enumerate}
\usepackage{accents}
\usepackage[noautoscale]{youngtab}
\usepackage{subfigure}
\ifx\pdfpageheight\undefined
   \usepackage[dvips,colorlinks=true,linkcolor=blue,citecolor=red,%
      urlcolor=green]{hyperref}
   \usepackage[dvips]{graphicx}
   \makeatletter
   \edef\Gin@extensions{\Gin@extensions,.mps}
   \DeclareGraphicsRule{.mps}{eps}{*}{}
   \makeatother
\else
 \usepackage[pdftex]{graphicx}
   \usepackage[bookmarksopen=false,pdftex=true,breaklinks=true,%
      backref=page,pagebackref=true,plainpages=false,%
      hyperindex=true,pdfstartview=FitH,colorlinks=true,%
      pdfpagelabels=true,colorlinks=true,linkcolor=blue,%
      citecolor=red,urlcolor=green,hypertexnames=false%
      ]%
    {hyperref}
\fi
\usepackage{bm}
\usepackage{tcolorbox}

\usepackage[margin=1in]{geometry}

\usepackage{amsmath,amssymb,amsfonts}
\newtheorem{theorem}{Theorem}
\newtheorem{lemma}{Lemma}

\newtheorem{proposition}{Proposition}
\newtheorem{claim}{Claim}
\newtheorem*{claim*}{Claim}
\newtheorem{conjecture}{Conjecture}

\newtheorem*{theorem*}{Theorem}
\newtheorem*{corollary*}{Corollary}
\theoremstyle{definition}
\newtheorem{definition}{Definition}

\newtheorem{notation}{Notation}

\newtheorem{problem}{Problem}
\newtheorem*{problem*}{Problem}

\usepackage{algorithm}
\usepackage{algpseudocode}

\usepackage{tikz}

\algnewcommand\algorithmicinput{\textbf{Input:}}
\algnewcommand\INPUT{\item[\algorithmicinput]}
\algnewcommand\algorithmicoutput{\textbf{Output:}}
\algnewcommand\OUTPUT{\item[\algorithmicoutput]}

\algnewcommand\algorithmicproc{\textbf{Procedure:}}
\algnewcommand\PROCEDURE{\item[\algorithmicproc]}
\algnewcommand\algorithmiccomplexity{\textbf{Complexity:}}
\algnewcommand\COMPLEXITY{\item[\algorithmiccomplexity]}

\newlength{\continueindent}
\setlength{\continueindent}{2em}
\usepackage{etoolbox}
\makeatletter
\newcommand*{\ALG@customparshape}{\parshape 2 \leftmargin \linewidth \dimexpr\ALG@tlm+\continueindent\relax \dimexpr\linewidth+\leftmargin-\ALG@tlm-\continueindent\relax}
\apptocmd{\ALG@beginblock}{\ALG@customparshape}{}{\errmessage{failed to patch}}
\makeatother

\theoremstyle{remark}
\newtheorem{remark}{Remark}

\theoremstyle{observation}

\definecolor{DarkBlue}{rgb}{0,0.1,0.55}

\numberwithin{equation}{section}




\newcommand {\hide}[1]{}
 \newcommand {\sign} {\mbox{\bf sign}}

\newcommand {\junk}[1]{}

\newcommand {\R} {\mathrm{R}}

\newcommand {\D}     {\mbox{\rm D}}

\newcommand {\C}     {\mathrm{C}}

\newcommand {\Sphere}{\mbox{${\bf S}$}}     

\newcommand {\Z}  {\mathbb{Z}}

\newcommand {\RR} {{\mathcal R}}

\newcommand {\Der} {{\rm Der}}

\newcommand {\la}   {{\langle}}
\newcommand {\ra}   {{\rangle}}
\newcommand {\eps} {{\varepsilon}}
\newcommand {\E} {{\rm ext}}


\newcommand {\B} {\mbox {\rm B}}

\newcommand{\card}{\mathrm{card}}

\def\addots{\mathinner{\mkern1mu
\raise1pt\vbox{\kern7pt\hbox{.}}
\mkern2mu\raise4pt\hbox{.}\mkern2mu
\raise7pt\hbox{.}\mkern1mu}}

\newcommand{\HH}  {\mbox{\rm H}}

\newcommand{\x}{\mathbf{x}}

\newcommand{\y}{\mathbf{y}}

\newcommand{\z}{\mathbf{z}}

\newcommand{\FF}{\mathbb{F}}

\newcommand{\Top}{\mathbf{Top}}

\newcommand{\Simp}{\mathbf{Simp}}

\newcommand{\ass}{\mathrm{ass}}

\newcommand{\clos}{\mathrm{clos}}

\newcommand{\head}{\mathrm{head}}
\newcommand{\tail}{\mathrm{tail}}

\begin{document}
\title[Efficient computation of a semi-algebraic basis]
{
Efficient computation of a semi-algebraic basis of the first homology group of a semi-algebraic set
}
\author{Saugata Basu}
\address{Department of Mathematics,
Purdue University, West Lafayette, IN 47907, U.S.A.}
\email{sbasu@math.purdue.edu}

\author{Sarah Percival}
\address{Department of Mathematics, 
Purdue University, West Lafayette, IN 47907, U.S.A.}
\email{sperciva@purdue.edu}

\subjclass{Primary 14F25; Secondary 68W30}
\date{\textbf{\today}}
\keywords{semi-algebraic sets, singly exponential, homology groups, basis}
\thanks{
Basu was  partially supported by NSF grants
CCF-1618918, DMS-1620271 and CCF-1910441.
}

\begin{abstract}
Let $\R$ be a real closed field and $\C$ the algebraic closure of $\R$.
We give an algorithm for computing a semi-algebraic basis for the first homology group,
$\HH_1(S,\mathbb{F})$,  with coefficients in a field $\FF$,
of any given semi-algebraic set $S \subset \R^k$ defined by a closed formula.
The complexity of the algorithm is bounded singly 
exponentially. More precisely, if the given quantifier-free formula involves $s$ polynomials whose degrees are
bounded by $d$, the complexity of the algorithm is bounded by $(s d)^{k^{O(1)}}$.
This algorithm generalizes well known algorithms having singly exponential complexity for computing a semi-algebraic 
basis of the zero-th homology group of semi-algebraic sets,  which is equivalent to
the problem of computing a set of points meeting every semi-algebraically connected component of the given semi-algebraic set at a unique point. It is not known how to compute such a basis for the higher homology groups with singly exponential complexity.

As an intermediate step in our algorithm we construct a semi-algebraic subset 
$\Gamma$ of the given semi-algebraic set $S$, such that $\HH_q(S,\Gamma) = 0$
for $q=0,1$. We relate this construction to a basic theorem in complex algebraic geometry stating that for any affine variety
$X$ of dimension $n$, there exists  Zariski closed subsets 
\[
Z^{(n-1)} \supset \cdots \supset Z^{(1)} \supset Z^{(0)} 
\]
with $\dim_\C Z^{(i)} \leq  i$, and $\HH_q(X,Z^{(i)}) = 0$ for $0 \leq q \leq i$.
We conjecture a quantitative version of this result in the semi-algebraic category, with $X$ and $Z^{(i)}$ replaced by closed semi-algebraic sets.
We make initial progress on this conjecture by proving the existence of $Z^{(0)}$ and $Z^{(1)}$ 
with complexity bounded singly exponentially (previously, 
such an algorithm was known only for constructing 
$Z_0$).
\end{abstract}

\maketitle
\tableofcontents

\section{Introduction}
We fix a real closed field $\R$, and denote by $\D \subset \R$ a fixed ordered
domain. We will denote by $\C = \R[i]$ the algebraic closure of $\R$.
For example, one can take $\R = \mathbb{R}$, and $\D = \mathbb{Z}$.

Semi-algebraic sets are subsets of $\R^k, k \geq 0$, which are defined by
quantifier-free first-order formulas with atoms of the form $P=0, P > 0, P \in \R[X_1,\ldots,X_k]$.
Algorithmic semi-algebraic geometry deals with computing geometric and
topological invariants of semi-algebraic subsets of $\R^k$, and is a very well-developed 
topic. Examples of algorithmic problems in semi-algebraic geometry that have been
investigated include effective quantifier-elimination, the decision problem of the first
order theory of the reals, computing topological invariants, such as the dimension,
number of semi-algebraically connected components, the Euler-Poincar\'e characteristic (appropriately defined), and more generally the Betti numbers of a given semi-algebraic set.

\subsection{Background and history}
\label{subsec:history}
The problem of computing the Betti numbers (i.e. the ranks of homology groups) of 
semi-algebraic sets has a long history and is an active area of current research.
Since doubly exponential complexity algorithms for computing the Betti numbers follow from effective triangulation algorithms for semi-algebraic sets, the emphasis has been on
obtaining algorithms with singly exponential complexity (see Section~\ref{subsec:single-double} below).
Singly exponential algorithms for computing the zero-th Betti number (i.e. the number of connected components) of semi-algebraic sets via construction of roadmaps
was obtained by several authors and
the complexity of the algorithms successively improved over the years 
\cite{Canny93a, GR92,  GV92, HRS93, BPR99}.

An algorithm with singly exponential complexity is known for
computing the first Betti number of semi-algebraic sets and is given in \cite{BPRbettione}, and then extended to the first
 $\ell$ (for any fixed $\ell$) Betti numbers in \cite{Bas05-first}.
The Euler-Poincar\'e characteristic, which is the alternating sum of the Betti numbers, is easier to compute,
and a singly exponential algorithm for computing it is known \cite{Basu1,BPR-euler-poincare}.
While many advances have been made in recent years \cite{Bas05-first, BPRbettione},
the best algorithm for computing \emph{all}  the Betti numbers of any given semi-algebraic set $S \subset \R^k$ 
still has doubly exponential (in $k$) complexity, even in the case where the degrees of the defining polynomials
are assumed to be bounded by a constant ($\geq 2$) \cite{SS} (here we are talking about exact algorithms, see Section~\ref{subsubsec:condition} below for a different model). 
The existence of algorithms with \emph{singly exponential complexity}  for computing all the Betti numbers of 
a given semi-algebraic set is considered to be a major open question in algorithmic semi-algebraic geometry
(see the survey \cite{Basu-survey}). \\

Unlike the singly exponential complexity algorithms for computing the zero-th
Betti numbers, the algorithms for computing the higher Betti numbers do not produce
a semi-algebraic basis. 
Obtaining such a basis efficiently and of small complexity is of interest in geometric
applications. 
In classical algebraic geometry over algebraically closed fields, representing homology classes by \emph{algebraic} cycles is a well studied problem with deep connections to 
Hodge theory. The existence of a \emph{semi-algebraic} basis with 
singly exponential complexity for the higher homology groups of a given semi-algebraic 
set is not known (other than in the zero-th homology case discussed above).
In this paper we remedy this deficiency by proving the existence of such a basis
in the case of the first homology group (cf. Theorem~\ref{thm:main}).

We note here that in the category of finite simplicial complexes, obtaining 
optimal representatives (as cycles) of homology classes is a well-studied problem. 
Early work on efficient algorithms for obtaining a shortest set of loops generating the first homology group
of a two dimensional oriented manifold (given as a simplicial complex) appears 
in \cite{Erickson05}
(see for example, \cite{DHM2020,Obayashi2018} for recent work on this topic). 
However, these algorithms are primarily combinatorial in nature and the main difficulty
in the semi-algebraic version of the problem is precisely that a triangulation of the given semi-algebraic set is not available (at least not known to be computable within
the complexity we are aiming for).

\subsubsection{Exact vs numeric}
\label{subsubsec:condition}
We remark here that  by the word ``algorithm''  in the previous paragraphs we are referring only to algorithms that work correctly for all inputs and whose complexity is uniformly bounded, i.e. bounded in terms of the degrees
and the number of input polynomials and independent of the actual coefficients of the polynomials (so in particular they always terminate). 
In contrast to this exact/symbolic model which is valid over arbitrary
real closed fields, in numerical analysis it is common to consider algorithms whose
complexity do depend on the coefficients (via a condition number). Such algorithms
work only over the field of real numbers and might not terminate on ill-conditioned
inputs (i.e. if the condition number is infinite). In this latter model, 
algorithms
with singly exponential complexity for computing all the 
Betti numbers of semi-algebraic sets have been developed \cite{BCL2019, BCT2020.1, BCT2020.2}. As noted above, these algorithms will fail to produce any result on certain inputs. Also, they do not produce semi-algebraic bases for the homology groups. 
In this paper we will be concerned only with exact algorithms that work for all possible inputs.

\subsection{Model of computation and definition of complexity}
There are several models of computation that one can consider while dealing with semi-algebraic sets (and also several notions of what constitutes an algorithm). If
the real closed field $\R =\mathbb{R}$, and $\D = \mathbb{Z}$, one can consider these
algorithmic problems in the classical Turing model and measure the bit complexity of 
the algorithms. In this paper, we will follow the book \cite{BPRbook2} and take a more general approach valid over arbitrary real closed fields. In the particular case, when
$\D = \mathbb{Z}$, our method will yield bit-complexity bounds. The precise notion of
complexity that we use is defined in Definition~\ref{def:complexity} below.

\subsubsection{Definition of complexity}
We will use the following notion of ``complexity''  in this paper. We follow the same definition as used in the book \cite{BPRbook2}. 
  
\begin{definition}[Complexity of algorithms]
\label{def:complexity}
In our algorithms we will usually take as input quantifier-free first order formulas whose terms
are  polynomials with coefficients belonging to an ordered domain $\D$ contained in a real closed field $\R$.
By \emph{complexity of an algorithm}  we will mean the number of arithmetic operations and comparisons in the domain $\D$.
If $\D = \mathbb{R}$, then
the complexity of our algorithm will agree with the  Blum-Shub-Smale notion of real number complexity \cite{BSS}.
\footnote{
In case $\D = \Z$, it is possible to deduce the bit-complexity of our algorithms in terms of the bit-sizes of the coefficients
of the input polynomials, and this will agree with the classical (Turing) notion of complexity. We do not state the bit complexity separately in our algorithms, but note that it is always bounded by a polynomial in the bit-size of the input times the complexity upper bound stated in the paper.} 
\end{definition}  
It is also useful for what follows to introduce the following mathematical definition
of ``complexity'' of formulas and semi-algebraic sets.

\subsubsection{$\mathcal{P}$-formulas, $\mathcal{P}$-semi-algebraic sets, realizations}
\begin{notation}[$\mathcal{P}$-formulas, $\mathcal{P}$-closed
formulas and their realizations]
  \label{not:sign-condition} 
  For any finite set of polynomials $\mathcal{P}
  \subset \R [ X_{1} , \ldots ,X_{k} ]$, 
  we call a quantifier-free first order formula $\Phi$ with atoms $P =0, P < 0, P>0, P \in \mathcal{P}$, to
  be a \emph{$\mathcal{P}$-formula}. 
  Given any semi-algebraic subset $Z \subset \R^k$,
  we call the \emph{realization of $\Phi$ in $Z$},
  namely the semi-algebraic set
  \begin{eqnarray*}
    \RR(\Phi,Z) & := & \{ \mathbf{x} \in Z \mid
    \Phi (\mathbf{x})\}
  \end{eqnarray*}
  a \emph{$\mathcal{P}$-semi-algebraic subset of $Z$}.
  If $Z = \R^k$, we often denote the realization of $\Phi$ in $\R^k$ by
  $\RR(\Phi)$.
  
  We say that a quantifier-free formula $\Phi$ is \emph{closed} if it is a formula in disjunctive normal form with no negations, and with atoms of the form $P \geq 0, P \leq 0$, where $P \in \R[X_1,\ldots,X_k]$. If the set of polynomials appearing in a closed formula is contained in a finite set $\mathcal{P}$, we will call such a formula a $\mathcal{P}$-closed formula,
 and we call the realization, $\RR(\Phi,\R^k)$, a \emph{$\mathcal{P}$-closed semi-algebraic set}.
  \end{notation}

\begin{definition}[Complexity of semi-algebraic sets]
For $\mathcal{P} \subset \D[X_1,\ldots,X_k]$, and 
$\mathcal{P}$-formula (resp. $\mathcal{P}$-closed formula) $\Phi$, 
we say that the complexity of $\Phi$
is bounded by $C$, where $C = \card(\mathcal{P})\cdot d$, where $d = \max_{P \in \mathcal{P}} \deg(P)$. If $S = \RR(\Phi,\R^k)$, then we will say that the complexity of $S$ is bounded by $\card(\mathcal{P})\cdot d$. 
\end{definition}

For the rest of the paper we fix a field $\FF$.

\begin{notation}
\label{not:homology}
For any closed semi-algebraic set
$X$, we will denote by $\HH_i(X) = \HH_i(X,\FF)$ 
the $i$-th homology group of $X$ with coefficients in 
$\FF$ (we refer the reader to \cite[Chapter 6]{BPRbook2} for definition of homology
groups of semi-algebraic subsets of $\R^k$, where $\R$ is an arbitrary real closed field).
\end{notation}


\begin{notation}
\label{not:balls-spheres}
Given $\x \in \R^k, r > 0$,
we will denote by $B_k(\x,r) \subset \R^k$ the (open) euclidean ball of radius $r$
centered at $\x$, and by $\Sphere^{k-1}(\x,r) \subset \R^k$, the sphere of radius $r$
centered at $\x$. Note that these are semi-algebraic subsets of $\R^k$.
\end{notation}

\subsection{Singly vs doubly exponential}
\label{subsec:single-double}
The problem of computing topological invariants (such as the number of semi-algebraically connected
components) of semi-algebraic sets in general is a hard problem (known to be
$\mathrm{PSPACE}$-hard in the Turing model).

From the point of view upper bounds on the complexity, these problems can be solved
by combinatorial means if we have in hand a triangulation of the given semi-algebraic set.
A semi-algebraic triangulation of a closed and bounded semi-algebraic set $S \subset \R^k$,
consists of a finite simplicial complex $K$, and a semi-algebraic homeomorphism 
$h: |K| \rightarrow S$ (where $|\cdot|$ denotes the \emph{geometric realization} functor).
Moreover, the homology group $\HH_*(S)$ is isomorphic to the simplicial homology groups
$\HH_*(K)$ which can be computed using standard linear algebra with complexity 
polynomial in the size of $K$.

Closed and bounded semi-algebraic sets admit semi-algebraic triangulations, and
more pertinently such triangulations can be effectively computed. However, the algorithms with the best complexity for computing such triangulations have \emph{doubly exponential} complexity (doubly exponential in $k$). More precisely, if $S \subset \R^k$ is defined by a quantifier-free formula involving $s$ polynomials of degrees at most $d$, the best algorithm for computing a semi-algebraic triangulation of $S$ is bounded by $(sd)^{2^{O(k)}}$.

It is a common belief in algorithmic semi-algebraic geometry that topological invariants satisfying
a certain bound (say singly exponential) should in fact be computable by algorithms with complexity reflecting the mathematical bound. So invariants which are bounded
singly exponential should in fact be computable by algorithms with singly exponentially bounded complexity. The intuition behind this belief is that the
natural way to compute
a topological invariant (such as the the zero-th Betti number of a semi-algebraic set) 
is often by computing a semi-algebraic representative or witnessing set (for example, a semi-algebraic
basis of $\HH_0(S)$ in the case of the zero-th homology -- see Problem~\ref{prob:H0'} below), and the cardinality
of this witnessing set is the invariant to be computed. One expects the complexity of 
the algorithm for computing the witnessing set should 
reflect the complexity of this set -- which includes the cardinality but also its 
``algebraic complexity'' as well.
The Betti numbers (ranks of homology groups)  
of semi-algebraic sets admit singly exponential upper bounds
\cite{OP,T,Milnor2}.
From this point of view one expects that there should exist algorithms for computing the Betti numbers of semi-algebraic sets with complexity bounded singly exponentially. 
Indeed, algorithms for computing the zero-th Betti number (i.e. the number of semi-algebraically connected components 
\footnote{The reason behind insisting on ``semi-algebraically'' connected instead of just connected (in the Euclidean topology) is that over an arbitrary real closed field these two notions are distinct. On the other hand if $\R = \mathbb{R}$, then being semi-algebraically connected is equivalent to being connected (see for example \cite[Theorem 5.22]{BPRbook2}).})
of semi-algebraic sets have been investigated in depth, and nearly optimal algorithms are known for this problem. 

\subsection{Computing a basis for $\HH_0(S)$}
\label{subsec:outline-H0}
As mentioned above, all algorithms for computing the zero-th Betti number (i.e. the number of semi-algebraically connected components) of a given semi-algebraic set actually solve the following more general problem.

\begin{problem}
\label{prob:H0}
Given a quantifier-free formula $\Phi$ defining a semi-algebraic subset $S\subset \R^k$, 
compute a finite subset $\Gamma \subset S$, such that for each connected component $C$
of $S$, $\card(\Gamma \cap C) = 1$ (i.e. $\Gamma$ contains a unique representative from each connected component of $S$).
\end{problem}

\begin{remark}
Note that one needs to allow points in $\Gamma$ whose coordinates are algebraic over
the ring generated by the coefficients of the polynomials appearing 
in the formula  $\Phi$. At this point we ignore the question of representation of such points (cf. Definition~\ref{def:Thom-encoding} below) other than commenting that any algorithm for solving this problem needs to address this issue.
\end{remark}

Problem~\ref{prob:H0} has been studied in depth and we now have very close to optimal algorithms for solving it. The solution is in two steps. 

\begin{enumerate}[Step 1.]
\item
\label{itemlabel:prob-H0:1}
The first and easier step is solving a weaker problem of computing a finite subset $\Gamma \subset S$ 
of ``sample points" with the property that
for each connected component $C$ of $S$, $\card(\Gamma \cap C) \geq  1$.
(The above property is equivalent to the property that the zero-th homology, $\HH_0(S,\Gamma)$, of the pair $(S,\Gamma)$ is trivial.)
There are now very efficient algorithms for solving this problem (see for example, \cite{GV,BPR95b}, \cite[Algorithm 13.3 (Sample points on a variety)]{BPRbook2}). In fact, such algorithms form the basic building block for efficient algorithms for solving the quantifier-elimination algorithms in the theory of the reals. 

\item
\label{itemlabel:prob-H0:2}
The second step is more complicated and involves solving the problem of deciding 
whether two given points in a semi-algebraic set $S$ 
belong to the same semi-algebraically connected component of $S$ efficiently.
This problem has a long history. 
The key idea is that of a roadmap of a semi-algebraic set (cf. Definition~\ref{def:roadmap} for a precise definition).

If we have a roadmap $\Gamma$ of $S$ 
which contains $\x,\y \in S$, then it is easy to decide whether $\x$ and $\y$ belong to the same semi-algebraically connected component of $\Gamma$, which tells us if they are in the same semi-algebraically connected component of $S$ itself because of the defining property of a roadmap. 
There exists singly exponential complexity algorithms for construction of roadmaps of semi-algebraic sets
\cite{Canny93a, GR92,  GV92, HRS93, BPR99},  
and hence for the  problem of deciding whether two given points belong to the same
semi-algebraically connected component of a semi-algebraic set.
Once we can decide if two points of the set of sample points $\Gamma$ belong to the same semi-algebraically connected component of $S$, we can then select exactly one point in every semi-algebraically connected component  of $S$ and thus obtain a singly-exponential algorithm for solving Problem~\ref{prob:H0}.
\end{enumerate}

\subsubsection{Interpretation in terms of a homology basis}
Note that Problem~\ref{prob:H0} can be reformulated in terms of homology
as follows. 

\begin{problem}
\label{prob:H0'}
Given a quantifier-free formula $\Phi$ defining a semi-algebraic subset $S\subset \R^k$, compute a semi-algebraic basis of $\HH_0(S)$. More precisely, compute a 
semi-algebraic subset $\Gamma = \{\x_1,\ldots, \x_N\} \subset S$, such that
the subspaces $[\x_1],\ldots,[\x_N] \subset \HH_0(S)$ are linearly independent and
span $\HH_0(S)$. Here $[\x_i]$ is the image of $\HH_0(\{\x_i\}) (\cong \FF)$ in $\HH_0(S)$
under the linear map induced by the inclusion $\{\x_i\} \hookrightarrow S$.
\end{problem}

The discussion in the beginning of this subsection yields the following theorem.

\begin{theorem}\cite{Canny93a, GR92,  GV92, HRS93, BPR99}
\label{thm:H0}
There exists an algorithm that takes as input a finite set 
\[
\mathcal{P} \subset \D[X_1,\ldots,X_k],
\]
and a $\mathcal{P}$-formula $\Phi$ whose  complexity is bounded by $C$, 
and outputs a semi-algebraic
basis, $\{\x_1,\ldots,\x_N\}$,  of $\HH_0(\RR(\Phi))$. The complexity of each $\x_i$ (as a semi-algebraic subset of $\R^k$ defined over $\D$), as well as the complexity of the algorithm, are both bounded by $C^{k^{O(1)}}$.
\end{theorem}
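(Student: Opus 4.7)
The plan is to follow the two-step outline already sketched in Section~\ref{subsec:outline-H0}, combining an efficient sample-point procedure with a roadmap construction. Since $\HH_0(S) = \HH_0(S,\FF)$ is the free $\FF$-vector space on the set of semi-algebraically connected components of $S$, exhibiting a semi-algebraic basis is equivalent to choosing exactly one point from each component.

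First I would invoke an existing sampling algorithm (for instance \cite[Algorithm 13.3]{BPRbook2}) applied to $\mathcal{P}$ and the formula $\Phi$ to produce a finite set $\Gamma_0 \subset S = \RR(\Phi)$ which meets every semi-algebraically connected component of $S$ at least once, together with algebraic encodings (e.g.\ Thom encodings) of the points of $\Gamma_0$ over $\D$. The complexity of this step, and the complexity of each point in $\Gamma_0$, is bounded by $C^{k^{O(1)}}$.

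Next I would run one of the singly exponential roadmap algorithms of \cite{Canny93a, GR92, GV92, HRS93, BPR99} on the input $(\mathcal{P},\Phi)$ together with the input points $\Gamma_0$, producing a semi-algebraic set $M \subset S$ of dimension at most one with the defining property that $M \cap C$ is non-empty and semi-algebraically connected for every semi-algebraically connected component $C$ of $S$, and such that $\Gamma_0 \subset M$. The output is a finite graph-like combinatorial description of $M$ (vertices represented by algebraic points, edges by semi-algebraic arcs) whose size and description complexity are bounded by $C^{k^{O(1)}}$. From this combinatorial description I can compute, in time polynomial in the size of the roadmap, the equivalence relation on $\Gamma_0$ of lying in the same connected component of $M$; by the defining property of the roadmap this coincides with the equivalence relation of lying in the same semi-algebraically connected component of $S$.

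Finally, I select one representative $\x_i$ from each equivalence class, obtaining $\Gamma = \{\x_1,\ldots,\x_N\}$. By construction, each semi-algebraically connected component of $S$ contains exactly one $\x_i$, so the classes $[\x_1],\ldots,[\x_N]$ form a basis of $\HH_0(S)$. Summing complexities across all three steps yields the bound $C^{k^{O(1)}}$ for the total running time and for the complexity of each $\x_i$. The only substantive ingredient is the roadmap construction itself, which is the deep part of the cited results; everything else is sampling, graph traversal, and bookkeeping on algebraic encodings, all of which fit comfortably within the stated singly exponential budget.
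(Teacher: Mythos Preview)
Your proposal is correct and follows essentially the same approach as the paper: the paper does not give a self-contained proof of Theorem~\ref{thm:H0} but simply refers back to the two-step outline in Section~\ref{subsec:outline-H0} (sample points plus roadmap-based connectivity testing) and cites the roadmap literature, which is exactly what you do. Your write-up is in fact more detailed than the paper's own treatment.
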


\section{New Results}
\label{sec:new}
We now state the new results proved in the paper.

\subsection{Generalization to the first homology group}
The main goal of this paper is to prove an analog of Theorem~\ref{thm:H0} with the
zero-th homology group replaced by the first homology group. 
The following theorem is the main result of the paper.

\begin{theorem}
\label{thm:main}
There exists an algorithm that takes as input a finite set 
\[
\mathcal{P} \subset \D[X_1,\ldots,X_k],
\]
and a $\mathcal{P}$-closed formula $\Phi$ whose  complexity is bounded by $C$, 
and outputs a finite set $\mathcal{Q} \subset \D[X_1,\ldots,X_k]$, 
as well as a finite tuple $(\Psi_j)_{j \in J}$, in which each $\Psi_j$ is a $\mathcal{Q}$-formula, such that
the realizations $\Gamma_j = \RR(\Psi_j,\R^k)$ have the following properties:
\begin{enumerate}[1.]
\item For each $j \in J$, 
$\Gamma_j \subset S$ and  $\Gamma_j$ is semi-algebraically homeomorphic to 
$\Sphere^1$ ( $ = \Sphere^1(\mathbf{0},1)$);
\item the inclusion map $\Gamma_j \hookrightarrow S$ induces an injective map
$\FF \cong \HH_1(\Gamma_j) \rightarrow \HH_1(S)$, whose image we denote by $[\Gamma_j]$;
\item
the tuple $([\Gamma_j])_{j \in J}$ forms a basis of $\HH_1(S)$.
\end{enumerate}
The complexity of each $\Gamma_j, j \in J$ (as a semi-algebraic subset of $\R^k$ defined over $\D$) is bounded by $C^{O(k^2)}$,
and the complexity of this algorithm is bounded by $C^{k^{O(1)}}$.
\end{theorem}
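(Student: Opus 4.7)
The strategy is to reduce the computation of a semi-algebraic basis of $\HH_1(S)$ to the combinatorial computation of a cycle basis of a one-dimensional semi-algebraic subset of $S$, and then transport the answer along the inclusion. First I would invoke the intermediate construction promised in the abstract: a closed semi-algebraic subset $\Gamma\subset S$ of dimension at most one, computable with singly exponential complexity, such that $\HH_q(S,\Gamma)=0$ for $q=0,1$. From the long exact sequence of the pair $(S,\Gamma)$,
\[
\HH_1(\Gamma)\longrightarrow \HH_1(S)\longrightarrow \HH_1(S,\Gamma)=0,
\]
the inclusion induces a surjection $\iota_{\ast}:\HH_1(\Gamma)\twoheadrightarrow \HH_1(S)$, so any basis of $\HH_1(\Gamma)$ maps onto a spanning set of $\HH_1(S)$; after selecting a linearly independent subfamily of the right size, one obtains the desired basis.

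Because $\dim\Gamma\le 1$, the set $\Gamma$ is semi-algebraically a graph. Using the standard singly exponential algorithms for sample points and critical points on curves (cf.\ \cite{BPRbook2}), I would compute a finite vertex set $V\subset \Gamma$ containing every singular, branch, and extremal point, together with the finite collection of smooth semi-algebraic arcs $E$ into which $\Gamma\setminus V$ decomposes. On the abstract graph $G=(V,E)$ a spanning forest $T\subset E$ can be extracted in time polynomial in $|V|+|E|$, and for every non-tree edge $e$ joining vertices $u,v$ the fundamental cycle
\[
C_e \defeq e \;\cup\; P_T(u,v)
\]
is a simple closed curve, hence semi-algebraically homeomorphic to $\Sphere^1$; a $\mathcal{Q}$-formula $\Psi_e$ defining $C_e$ is assembled from the defining formulas of its constituent arcs, which controls the per-loop complexity. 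The family $([C_e])_{e\notin T}$ is a basis of $\HH_1(\Gamma)$, so by the preceding step its images generate $\HH_1(S)$.

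It remains to select a subfamily whose images are linearly independent in $\HH_1(S)$. The first Betti number $b_1(S)$, and more generally the rank of the image in $\HH_1(S)$ of any singly exponentially presented one-dimensional subset, can be computed with singly exponential complexity by the algorithms of \cite{BPRbettione, Bas05-first}. A greedy loop then works: maintain an accepted set $\mathcal{B}$ (initially empty), and for each candidate $C_e$ accept it precisely if $\rank\,\iota_\ast\langle \mathcal{B}\cup \{C_e\}\rangle > \rank\,\iota_\ast\langle \mathcal{B}\rangle$; the procedure terminates with $|\mathcal{B}|=b_1(S)$ simple loops $\Gamma_j\cong\Sphere^1$ whose classes form a basis of $\HH_1(S)$ and are, in particular, individually non-null-homologous. \textbf{The main obstacle is Step 1}: producing $\Gamma$ with the stronger property $\HH_1(S,\Gamma)=0$ rather than the standard roadmap property $\HH_0(S,\Gamma)=0$, while keeping the complexity singly exponential in $k$. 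A roadmap alone captures connectivity but not loops; killing the relative first homology requires enriching the roadmap by one-dimensional arcs placed inside fibers over critical values of a generic projection and glued via a Mayer--Vietoris analysis, in such a way that each enrichment incurs only a polynomial (not exponential) blow-up in the accumulated size of $\Gamma$.
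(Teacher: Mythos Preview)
Your overall strategy matches the paper's two-step plan: (1) invoke the one-dimensional subset $\Gamma\subset S$ with $\HH_q(S,\Gamma)=0$ for $q=0,1$ (the paper's Algorithm~\ref{alg:surjection}, whose construction via Morse-type partitions, recursive fiber calls, and a Mayer--Vietoris/Five-lemma argument you sketch accurately in your final paragraph), then (2) extract the underlying graph, compute a basis of simple cycles, and prune to a subfamily whose images form a basis of $\HH_1(S)$. The paper's $\Gamma$ is in fact output directly as a union of curve-segment representations with explicit endpoint data, so the graph $G$ comes for free and no separate singular-point computation is needed.

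Where you diverge is the pruning step, and here there is a gap. You assert that the algorithms of \cite{BPRbettione,Bas05-first} compute ``the rank of the image in $\HH_1(S)$ of any singly exponentially presented one-dimensional subset,'' but those algorithms compute only the Betti numbers of a single semi-algebraic set; they do not return the homomorphism induced by an inclusion, nor its rank, so your greedy test has no stated implementation within the claimed complexity. The paper handles this differently: it calls the simplicial-replacement algorithm of \cite{Basu-Karisani} (Algorithm~\ref{alg:simplicial-replacement}) with $\ell=1$ on the tuple $(\Psi_1,\dots,\Psi_N,\Phi)$, obtaining a simplicial complex $\Delta$ together with subcomplexes $\Delta_h$ that are \emph{diagrammatically} homologically $1$-equivalent to $(\RR(\Psi_h)\hookrightarrow S)_h$; the maps $\HH_1(|\Delta_h|)\to\HH_1(|\Delta|)$ are then explicit simplicial maps, and ordinary Gauss--Jordan elimination on their images selects the basis. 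Your route could be repaired (for instance, cone off the accepted loops in $\R^{k+1}$ and read the rank from $b_1$ of the result), but as written that reduction is missing.
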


\subsection{Connections with the ``basic lemma'' in complex algebraic geometry}
\label{subsec:basic}
As mentioned previously,  designing an efficient algorithm 
(i.e. with singly exponential complexity) for computing semi-algebraic triangulations of 
semi-algebraic sets is one of the most important open problems in 
algorithmic semi-algebraic geometry. In the absence of such an algorithm, several
low dimensional work-arounds have been designed to compute important topological invariants without having to compute a full triangulation. For example, there are
algorithms with singly exponential complexity for computing the first $\ell$ Betti numbers (for any fixed $\ell \geq 0$) 
which do not use triangulations \cite{Bas05-first}. More recently,
in \cite{Basu-Karisani},  the authors give an algorithm with singly exponential
complexity for computing a simplicial complex which is homologically 
$\ell$-equivalent (see Definition~\ref{def:ell-equivalent} below) to a given closed
semi-algebraic set (for any fixed $\ell \geq 0$). 

Over the complex numbers, algebraic varieties cannot be triangulated (in the usual topological sense) using algebraic sets and maps. 
The following result  (see for example \cite{Nori2002} where it is called the 
``Basic Lemma'') serves as a substitute and can be considered a weak analog in complex algebraic geometry
of the property that real semi-algebraic sets can be triangulated using semi-algebraic maps.

\begin{lemma}[Basic Lemma - first form \cite{Nori2002}]
\label{lem:basic}
Let $K$ be a subfield of $\C$. Let $W$ be Zariski closed in an affine variety $X$ defined over $K$. Assume $\dim W < \dim_\C X$. Then, there is a Zariski closed  $Z$ in $X$ so that
$\dim_\C Z < \dim_\C X$ with $W_\C \subset Z$, and
\[
\HH_q(X,Z) = 0, 
\]
whenever $q \neq \dim_\C X$.
\end{lemma}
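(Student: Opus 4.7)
The plan is to choose $Z$ as an enlargement of $W_\C$ by a suitably generic hypersurface section of $X$, and to deduce the vanishing of $\HH_q(X, Z)$ for $q \neq n := \dim_\C X$ from an affine Lefschetz-type theorem. The two key topological inputs are the Andreotti--Frankel theorem (an affine complex variety of complex dimension $n$ has the homotopy type of a CW complex of real dimension $\leq n$, so $\HH_q(X) = 0$ for $q > n$) and Artin's vanishing theorem (constructible sheaves on affine complex varieties of dimension $n$ have cohomological dimension $\leq n$).

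To construct $Z$, I first choose a regular function $g$ on $X$ vanishing on $W$ but not identically on any irreducible component of $X$; such a $g$ exists because $\dim W < n$. Then $\{g = 0\} \cap X$ is Zariski closed in $X$, has dimension $\leq n-1$, and contains $W$. To enforce the desired vanishing, I perturb $g$ within the ideal of $W_\C$ in $\C[X]$: write $g_{\mathbf{t}} = g + \sum_i t_i h_i$ with $h_i$ in the ideal of $W_\C$, and take $Z = \{g_{\mathbf{t}} = 0\} \cap X$ for a sufficiently generic parameter $\mathbf{t} \in \C^r$. The field of definition of $Z$ may enlarge from $K$ to $\C$, which the statement permits.

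To prove $\HH_q(X, Z) = 0$ for $q \neq n$, I invoke the affine Lefschetz theorem: for $X$ affine of complex dimension $n$ and $f : X \to \C$ sufficiently generic, the inclusion $f^{-1}(0) \hookrightarrow X$ is $(n-1)$-connected, hence $\HH_q(X, f^{-1}(0)) = 0$ for $q \leq n - 1$ by relative Hurewicz. Combined with Andreotti--Frankel applied both to $X$ and to the affine subvariety $Z$ of dimension $\leq n-1$ (giving $\HH_q(X) = 0$ for $q > n$ and $\HH_q(Z) = 0$ for $q > n-1$), the long exact sequence of the pair forces $\HH_q(X, Z) = 0$ also for $q > n$. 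The affine Lefschetz statement itself is a consequence of Artin vanishing applied to $j_! \underline{\FF}$ with $j : X \setminus f^{-1}(0) \hookrightarrow X$, together with duality and the fact that $X \setminus f^{-1}(0)$ is still affine of dimension $n$.

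The main obstacle is the genericity step: showing that the family $\{g_{\mathbf{t}}\}$ meets the hypotheses of the affine Lefschetz theorem for generic $\mathbf{t}$, despite the constraint that $g_{\mathbf{t}}$ must vanish on $W_\C$. This is a Bertini-type transversality statement inside a constrained linear system, and must accommodate singularities of both $X$ and $W_\C$. One route is a stratified Bertini theorem; another is to invoke directly the sheaf-theoretic formulation of Artin vanishing, which only asks that the family avoid a proper ``bad locus'' in parameter space. Verifying that this bad locus is indeed proper -- so that a generic $\mathbf{t}$ avoids it -- is the technical heart of the argument.
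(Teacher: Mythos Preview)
The paper does not prove this lemma: it is quoted as background from \cite{Nori2002} and used only to motivate Conjecture~\ref{conj:basic} and Theorem~\ref{thm:basic}. So there is no ``paper's own proof'' to compare against.

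Your sketch is the Beilinson-style proof of Nori's Basic Lemma: take $Z$ to be a generic hypersurface section of $X$ (inside the linear system cutting out $W$), get vanishing above degree $n$ from Andreotti--Frankel applied to $X$ and to $Z$, and get vanishing below degree $n$ from an affine Lefschetz statement driven by Artin vanishing on the affine complement $X \setminus Z$. This is a correct and standard route. One small sharpening: rather than framing the genericity as a stratified Bertini theorem for the hypersurface, it is cleaner to observe that what you need is that $X \setminus Z$ be affine (immediate, since $Z$ is a hypersurface section) and that the constant sheaf on it, extended by zero to $X$, satisfy the Artin bound; the ``bad locus'' in your parameter space is then governed by constructibility of the relevant higher direct images over the parameter base, which gives properness of the bad set without delicate transversality arguments on the singular strata. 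You correctly flag this as the crux but stop short of carrying it out; since the paper treats the lemma as a black-box citation, that is consistent with the paper's level of detail.
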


\begin{remark}
\label{rem:basic}
Note also that by applying Lemma~\ref{lem:basic} repeatedly to an affine variety
$X$ of dimension $n$, one obtains Zariski closed subsets of $X$,
\[
Z^{(n-1)} \supset \cdots Z^{(i)} \supset \cdots \supset Z^{(1)} \supset Z^{(0)} 
\]
with $\dim Z^{(i)} \leq i$, and $\HH_q(X,Z^{(i)}) = 0$ for $0 \leq q \leq  i$.
(This follows from applying the basic lemma with $W = \emptyset$, first to $X$ to obtain
$Z^{(n-1)}$, and then to $Z^{(n-1)}$ to obtain $Z^{(n-2)}$ and so on, and the homology long exact sequence of the various  triples $(X,Z^{(i)}, Z^{(i-1)})$.)
\end{remark}

One could ask for a semi-algebraic version of the basic lemma, where $W,X,Z$ are closed semi-algebraic subsets (or even Zariski closed subsets) of $\R^k$, where $\R$
is a real closed field. The proof of the basic lemma (actually of a stronger version)
given in \cite{Nori2002} is valid over real closed  fields.
However, the proof is not effective in the sense that no bound on the
complexity of $Z$ is given (in terms of the complexities of $X$ and $W$). However, since
the proof depends on iterated projections, the complexity of the construction is 
likely to be doubly exponential. 

Notice that in the semi-algebraic case (taking $W = \emptyset$), one
can take $Z = Z^{(i)} = h(|\mathrm{sk}_{i}(K)|)$, where $h:|K| \rightarrow X$ is a semi-algebraic
triangulation of $X$, 
and $\mathrm{sk}_{q}(K)$ denotes the $q$-dimensional skeleton of $K$. 
The sequence of semi-algebraic subsets $Z^{(\dim X -1 )} \supset \cdots \supset Z^{(0)}$
would then satisfy the properties of Remark~\ref{rem:basic}.
However, it is clear that
the best complexity one can obtain in this way is doubly exponential.
The existence of the sequence $Z^{(i)}$ over algebraically closed fields
(cf. Remark~\ref{rem:basic}) inspires the following the question.\\

Is it possible
to prove in the semi-algebraic case the existence of a similar sequence
satisfying the homological property in Remark~\ref{rem:basic}, but where the
the subsets $Z^{(i)}$ do not correspond to skeleta of some triangulation -- and hence
could be potentially of smaller complexity ? \\

In fact, it makes sense to ask
for the existence of the sequence $Z^{(i)}$ whose complexity is graded in terms of $i$ i.e. $Z^{(0)}$ has the smallest complexity, followed by $Z^{(1)}$, and so on.
We formulate below a quantitative conjecture which is a version of the  basic lemma in the semi-algebraic case as follows.

\begin{conjecture}
\label{conj:basic}
 Let $X \subset \R^k$ be a semi-algebraic set defined by a closed formula
 of complexity bounded by $C$, and let $\dim X = n$.
Then there exists closed semi-algebraic subsets of $X$,
\[
 Z^{(n-1)} \supset \cdots \supset Z^{(1)} \supset Z^{(0)} 
\]
with $\dim Z^{(i)} \leq  i$, and $\HH_q(X,Z^{(i)}) = 0$ for $0 \leq q \leq i$, such that
for each ${i, 0 \leq i \leq n-1}$,
the complexity of $Z^{(i)}$ is bounded by $C^{O(k^{i+1})}$.  

Moreover, there  exists an algorithm for computing closed formulas describing $Z^{(0)}, \ldots,Z^{(\ell)}, 0 \leq \ell \leq n-1$, whose complexity is bounded by $C^{k^{O(\ell)}}$.
\end{conjecture}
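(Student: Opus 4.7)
The plan is to prove the conjecture only in the cases $i = 0$ and $i = 1$, which is what current singly exponential technology allows; the main obstruction to $i \geq 2$ is isolated at the end. The construction will be built inductively so that nestedness $Z^{(1)} \supset Z^{(0)}$ is automatic.

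For $Z^{(0)}$ I would invoke the standard sample point algorithm (e.g.\ \cite[Algorithm 13.3]{BPRbook2}) to obtain, in complexity $C^{k^{O(1)}}$, a finite semi-algebraic subset of $X$ meeting every semi-algebraically connected component of $X$. Since the induced map $\HH_0(Z^{(0)}) \to \HH_0(X)$ is then surjective, the long exact sequence of the pair gives $\HH_0(X, Z^{(0)}) = 0$.

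For $Z^{(1)}$ the plan is to assemble three ingredients, each of singly exponential complexity. First, a roadmap $R$ of $X$, computed via one of the singly exponential roadmap algorithms \cite{BPR99}; by definition $R$ is one-dimensional, closed, contains $Z^{(0)}$ (by feeding $Z^{(0)}$ to the algorithm as a set of input points), and has connected components in bijection with those of $X$. Second, the collection of loops $(\Gamma_j)_{j \in J}$ produced by Theorem~\ref{thm:main}, whose homology classes form a basis of $\HH_1(X)$ and whose individual complexities are bounded by $C^{O(k^2)}$. Third, for each $j \in J$, a semi-algebraic path $\gamma_j \subset X$ connecting a chosen sample point of $\Gamma_j$ to the unique connected component of $R$ lying in the same connected component of $X$; such a path can be extracted by augmenting the input to the roadmap algorithm with that sample point and invoking the connection property of roadmaps. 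I would then set
\[
Z^{(1)} \;=\; R \;\cup\; \bigcup_{j \in J} \bigl( \Gamma_j \cup \gamma_j \bigr).
\]
By construction $Z^{(1)}$ is one-dimensional, contains $Z^{(0)}$, and has exactly one connected component per connected component of $X$, so $\HH_0(Z^{(1)}) \to \HH_0(X)$ is an isomorphism. The image of $\HH_1(Z^{(1)}) \to \HH_1(X)$ contains $\{[\Gamma_j]\}_{j \in J}$ and is therefore surjective. The long exact sequence of the pair $(X, Z^{(1)})$ then gives $\HH_q(X, Z^{(1)}) = 0$ for $q \in \{0,1\}$, and the complexity bound $C^{O(k^2)}$ follows from the complexities of the three ingredients.

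The hard part, and the reason the conjecture is left open for $i \geq 2$, is that ensuring $\HH_i(X, Z^{(i)}) = 0$ requires surjectivity of $\HH_i(Z^{(i)}) \to \HH_i(X)$, which in turn requires an explicit semi-algebraic basis of $\HH_i(X)$ of singly exponential complexity represented by $i$-dimensional semi-algebraic chains. This is precisely the open problem of computing higher homology bases in singly exponential time referenced in Section~\ref{subsec:history}: without an analog of Theorem~\ref{thm:main} for degrees above $1$, the strategy above does not extend, and only the partial cases $i = 0, 1$ can be established unconditionally.
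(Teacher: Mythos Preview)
The statement is a \emph{conjecture} that the paper explicitly leaves open; the paper only establishes the cases $i=0,1$ (Theorem~\ref{thm:basic}), which is exactly what you prove, and you correctly isolate why $i\geq 2$ is out of reach. So in scope you match the paper.

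Your route for $Z^{(1)}$, however, is genuinely different from the paper's and in fact logically reversed. In the paper, $Z^{(1)}$ is obtained \emph{first}, directly, as the output $\Gamma$ of Algorithm~\ref{alg:surjection}: a roadmap-style recursive construction, but with the stronger Morse-type partition (Algorithm~\ref{alg:filtration}) at each level, so that Proposition~\ref{prop:alg:surjection:correctness} yields $\HH_q(X,\Gamma)=0$ for $q=0,1$ via a Five-lemma/Mayer--Vietoris induction. Only \emph{afterwards} is Theorem~\ref{thm:main} proved, by extracting simple cycles from the graph underlying $\Gamma$ and pruning them with the simplicial replacement algorithm. You instead take Theorem~\ref{thm:main} as a black box and glue its loops onto a standard roadmap. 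This is not circular, since Theorem~\ref{thm:main} is already established, but it is redundant: the very proof of Theorem~\ref{thm:main} that you cite already produces a valid $Z^{(1)}$ as its intermediate step, so your construction does strictly more work for the same conclusion. What your approach buys is modularity---it shows that $Z^{(1)}$ follows formally from any semi-algebraic basis of $\HH_1$ plus any roadmap---whereas the paper's approach is self-contained and shows that $Z^{(1)}$ is in fact the \emph{prerequisite} for such a basis, not a consequence of it.

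One small gap: the conjecture asks for $Z^{(1)}$ to be described by a \emph{closed} formula. Standard roadmap algorithms output curve-segment representations, not closed formulas, and the conversion is nontrivial (the paper devotes Algorithm~\ref{alg:convert-to-closed} and Claim~\ref{claim:alg:convert-to-closed} to it). Your roadmap $R$ and connecting paths $\gamma_j$ would need this conversion step, which you should mention.
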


\begin{remark}
\label{rem:conj:basic}
Conjecture~\ref{conj:basic} is especially interesting because of the following observation.
The standard algorithms for triangulating 
semi-algebraic sets using cylindrical algebraic decomposition (see for example \cite[Chapter 5]{BPRbook2}) can be modified so that their complexities are bounded doubly exponentially only in the 
dimension of the given semi-algebraic set rather than that of the ambient space. Hence if  Conjecture~\ref{conj:basic} is true, then it will provide an alternative approach
(compared to \cite{Bas05-first, Basu-Karisani})
towards the problem of computing the the first $\ell$ Betti numbers of any given semi-algebraic 
set with singly exponential complexity for each fixed $\ell$. In this paper, we take 
an initial step towards verifying the conjecture (see Theorem~\ref{thm:basic}). We believe that the same inductive approach used in the proof of Proposition~\ref{prop:alg:surjection:correctness}
can be generalized to handle the full conjecture.
\end{remark}

We prove the following.
\begin{theorem}
\label{thm:basic}
With the same notation as in Conjecture~\ref{conj:basic}, there exists
closed semi-algebraic subsets of $X$,
\[
Z^{(1)} \supset Z^{(0)} 
\]
with $\dim Z^{(i)} \leq  i $, and $\HH_q(X,Z^{(i)}) = 0$ for $0 \leq q \leq i, i =0,1$,
such that
the complexities of $Z^{(0)}, Z^{(1)}$ are bounded by $C^{O(k^2)}$. 

Moreover, there  exists an algorithm for computing closed formulas describing $Z^{(0)}, Z^{(1)}$, whose complexity is bounded by $C^{k^{O(1)}}$.
\end{theorem}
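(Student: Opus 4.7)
The proof plan is to construct $Z^{(0)}$ and $Z^{(1)}$ separately. For $Z^{(0)}$, the condition $\HH_0(X,Z^{(0)}) = 0$ is equivalent to requiring $Z^{(0)}$ to contain at least one point of each semi-algebraically connected component of $X$. Such a finite set can be produced by the standard sample-points algorithm (\cite[Algorithm 13.3]{BPRbook2}) with algorithmic complexity $C^{k^{O(1)}}$; the resulting set $Z^{(0)}$ has complexity at most $C^{O(k)}$ as a semi-algebraic subset of $\R^k$, well within the claimed bound $C^{O(k^2)}$.

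For $Z^{(1)}$, my plan is to take an augmented roadmap. By the long exact sequence of the pair $(X,Z^{(1)})$, the vanishing $\HH_q(X,Z^{(1)}) = 0$ for $q = 0,1$ is equivalent to the inclusion $Z^{(1)} \hookrightarrow X$ inducing an isomorphism on $\HH_0$ and a surjection on $\HH_1$. Start with a roadmap $RM(X)$ of $X$ through the input set $Z^{(0)}$, computable with singly exponential complexity \cite{BPR99}; this is a $1$-dimensional semi-algebraic subset containing $Z^{(0)}$ and already realizing the $\HH_0$-isomorphism. The hard part is to augment $RM(X)$ by additional $1$-dimensional curves so as to surject onto $\HH_1(X)$. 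I would proceed by induction on $k$: after a generic linear change of coordinates, let $\pi:X \to \R$ be projection onto the first coordinate with critical values $c_1 < \cdots < c_m$; for each critical fiber $F_j = X \cap \pi^{-1}(c_j)$, recursively construct a $1$-dimensional $\Gamma_j \subset F_j$ with $\HH_1(\Gamma_j) \to \HH_1(F_j)$ surjective. Taking $Z^{(1)} = RM(X) \cup \bigcup_j \Gamma_j$, one then invokes a Mayer-Vietoris argument on the cover of $X$ by (suitably thickened) slabs $X \cap \pi^{-1}([c_j, c_{j+1}])$: in each slab the projection $\pi$ is semi-algebraically trivial, so any $1$-cycle in $X$ can be pushed onto the critical fibers and there captured by the $\Gamma_j$'s, while the connecting paths lie in $RM(X)$.

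The main obstacle is making the Mayer-Vietoris and deformation-retraction arguments rigorous over an arbitrary real closed field, where open neighborhoods and continuous retractions must be replaced by infinitesimal thickenings in an extension $\R\la\eps\ra$, and one must invoke a semi-algebraic (Hardt-style) triviality of $\pi$ away from critical values to produce the required retractions onto critical fibers. A secondary technical issue is ensuring that the $\Gamma_j$'s are genuinely contained in $X$ and that the resulting $Z^{(1)}$ is closed. For the complexity bookkeeping, each recursive call drops the ambient dimension by one, while the polynomial degrees grow by a factor of at most $d^{O(k)}$ at each level (coming from the projection and critical-value computation); chaining the recursion over at most $k$ levels gives $Z^{(1)}$ of complexity $C^{O(k^2)}$ as claimed, and the algorithmic complexity remains $C^{k^{O(1)}}$ because the dominant cost is the roadmap computation at each level.
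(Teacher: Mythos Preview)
Your overall strategy---building $Z^{(1)}$ as a roadmap-like one-dimensional set augmented by recursive calls in distinguished fibers, with a Mayer--Vietoris argument to prove surjectivity on $\HH_1$---matches the paper's approach in outline. However, there is a genuine gap in the step where you propose to use ``critical values'' of a generic projection and to invoke Hardt-style semi-algebraic triviality over each slab.

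For an arbitrary closed semi-algebraic set $X$ (not a smooth variety) there is no intrinsic notion of critical value of a projection; and while Hardt's theorem does give semi-algebraic triviality over a generic interval, effectively computing a partition of $\R$ into intervals over which triviality holds is not known with singly exponential complexity---the standard proofs go through cylindrical algebraic decomposition and are doubly exponential. So your inductive step, which relies on deforming $1$-cycles in a slab onto a critical fiber via a trivialization, cannot be carried out within the claimed complexity budget. You flag this yourself as ``the main obstacle,'' but it is not a technicality: it is the whole point.

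The paper circumvents this obstacle in two ways. First, instead of critical values it uses the \emph{Morse partition} algorithm (Algorithm~\ref{alg:filtration}, taken from \cite{Basu-Karisani}), which produces, with singly exponential complexity, values $t_1<\cdots<t_N$ such that for every $t\in[t_j,t_{j+1})$ the inclusions $S_{(-\infty,t_j]}\hookrightarrow S_{(-\infty,t]}$ induce isomorphisms on \emph{all} homology groups (and hence, by a short Mayer--Vietoris argument in Proposition~\ref{prop:alg:filtration}, so do the inclusions $S_t\hookrightarrow S_{[t_j,t_{j+1}]}$). This is strictly weaker than triviality over each slab, but it is exactly what is needed. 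Second, the surjectivity proof (Proposition~\ref{prop:alg:surjection:correctness}) is purely homological: it applies one half of the Five Lemma to the map of Mayer--Vietoris sequences induced by $\Gamma\hookrightarrow S$, so no deformation retraction of $S$ onto any fiber is ever invoked---only the homological equivalences supplied by the Morse partition, together with retractions of $\Gamma$ itself along the $X_{i+1}$-coordinate (which are elementary because the curve segments are arranged to be pairwise disjoint). This replacement of a geometric triviality statement by a homological one, available in singly exponential time, is the key idea you are missing.
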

\subsection{Comparison with roadmaps}
\label{subsec:roadmaps}
One of our intermediate constructions, namely the semi-algebraic subset $Z^{(1)}$ in
Theorem~\ref{thm:basic} (also the set $\Gamma$ constructed in Algorithm~\ref{alg:surjection}) is reminiscent of roadmaps of semi-algebraic sets
(mentioned earlier), and their construction is somewhat similar to the (classical) construction of roadmaps. \footnote{More modern algorithms such as those described in  \cite{BRSS14, BR14} use different techniques.} We describe here the key difference between our construction and the classical construction of roadmaps
in Section~\ref{subsec:outline-H1} where we give an outline of Algorithm~\ref{alg:surjection}. In this section, we recall the defining property of roadmaps of semi-algebraic sets and indicate why a roadmap is not sufficient 
for the purposes of the current paper.

The following definition is taken from \cite[Chapter 15]{BPRbook2}.
Let $S \subset \R^k$ be a semi-algebraic set. We denote
by $\pi_1:\R^k \rightarrow \R$ the projection to the $X_{1}$-coordinate 
and denote for $x \in \R$, 
$
S_{x} = S \cap \pi_1^{-1}(x)
$ 
(cf. Notation~\ref{not:proj} below).
\begin{definition}
\label{def:roadmap}
A semi-algebraic subset $\Gamma \subset S$ is called a roadmap of $S$ if it satisfies
the following properties:
\begin{enumerate}
    \item [$\mathrm{RM}_0$.] $\dim \Gamma \leq 1$;
    \item [$\mathrm{RM}_1$.] for every semi-algebraically connected component $C$ of $S$,
  $C \cap \Gamma$ is semi-algebraically connected;
    \item [$\mathrm{RM}_2$.] for every $x \in \R$ and for every semi-algebraically
  connected component $D$ of $S_{x}$, $D \cap \Gamma  \neq \emptyset$.
\end{enumerate}
\end{definition}

\begin{remark}
\label{rem:roadmap}
We note that the design of efficient algorithms for construction of roadmaps of semi-algebraic sets
has a long history \cite{Canny93a, GR92,  GV92, HRS93, BPR99}. The algorithm with the
best complexity can be found in \cite{BPR99}. The complexity of this algorithm  is bounded by
$s^{k'+1}d^{O(k^2)}$, where $s$ is the number of polynomials used to define the given set, $d$ a bound on their degrees, $k$ the dimension of the ambient space, and $k'$
is the dimension of a real variety containing the given set \cite{BPR99}. 
More recently, the dependence on $k$ in the exponent has been further improved using new methods
\cite{BRSS14, BR14}, but for the moment these new algorithms work only for 
algebraic (rather than semi-algebraic) sets.
\end{remark}

Notice that if $S$ is a closed semi-algebraic set and $\Gamma \subset S$ is a roadmap
of $S$, then the homomorphism $i_{*,1}:\HH_1(\Gamma) \rightarrow \HH_1(S)$ need not be surjective. A simple example of this phenomenon is provided by a torus
$T \subset \R^3$ as depicted in Figure~\ref{fig:torus}.
\begin{figure}
\centering
\begin{minipage}{.3\textwidth}
  \centering
  \includegraphics[width=.4\linewidth]{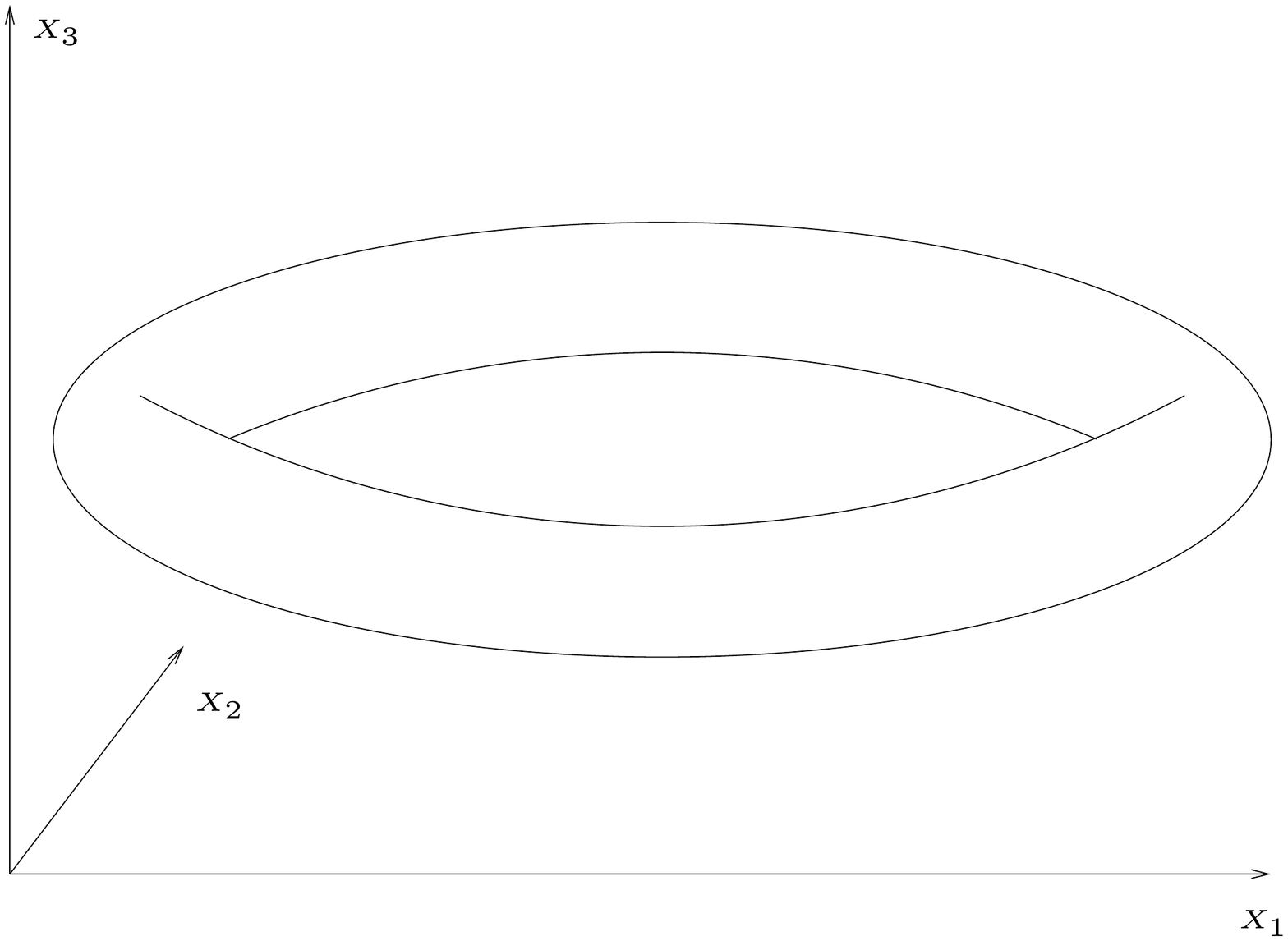}
  \caption{figure}{$T \subset \R^3$}
  \label{fig:torus}
\end{minipage}%
\begin{minipage}{.3\textwidth}
  \centering
  \includegraphics[width=.4\linewidth]{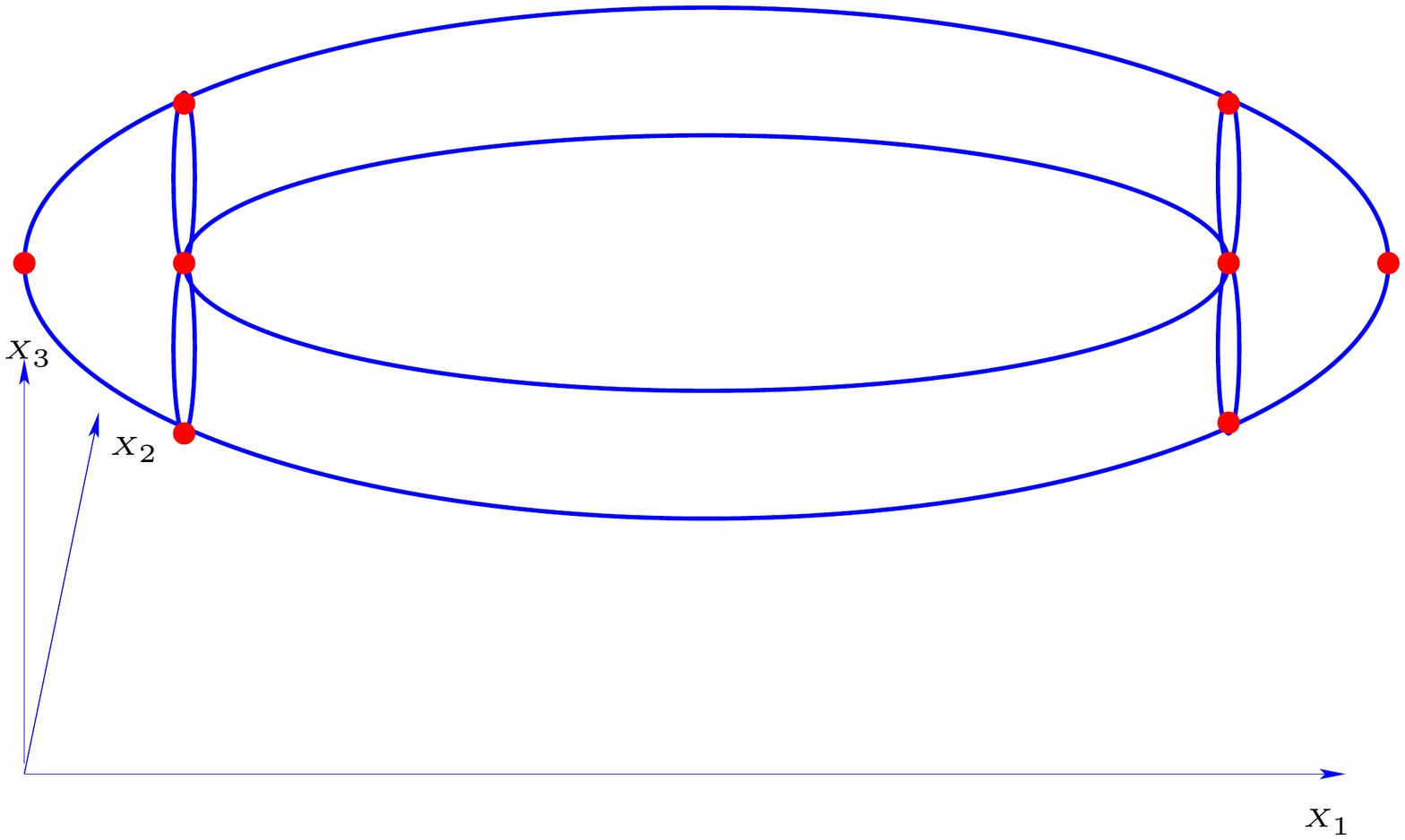}
  \caption{figure}{$\Gamma \subset T$}
  \label{fig:torus-roadmap}
\end{minipage}
\begin{minipage}{.3\textwidth}
  \centering
  \includegraphics[width=.4\linewidth]{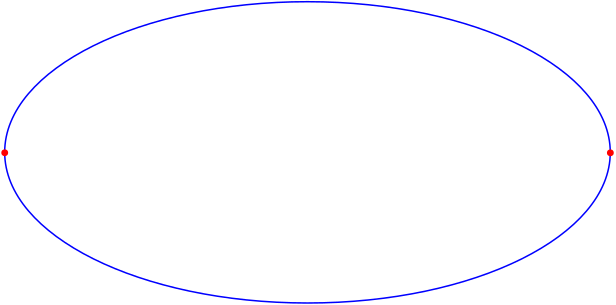}
  \caption{figure}{$\Gamma' \subset T$}
  \label{fig:torus-roadmap'}
\end{minipage}
\end{figure}
The classical construction of roadmap (see for example \cite[Chapter 15]{BPRbook2}) 
produces the one dimensional semi-algebraic subset $\Gamma$ depicted in
Figure~\ref{fig:torus-roadmap}.
In this example, the set $\Gamma$ does satisfy the property that $\HH_q(T,\Gamma) = 0, q=0,1$, though
this is not ensured by the classical roadmap algorithm.
However, notice that if we take the subset $\Gamma' \subset \Gamma$ consisting only of the larger horizontal circle as depicted in the Figure~\ref{fig:torus-roadmap'}, then
$\Gamma'$ does satisfy the property of being a roadmap of $T$ (cf. Definition~\ref{def:roadmap}), but
$\HH_1(T,\Gamma') \neq 0$ (in fact, $\dim \HH_1(T,\Gamma') = 1$). \\

The rest of the paper is devoted to proving Theorems~\ref{thm:main} and \ref{thm:basic}.
In Section~\ref{sec:prelim}, we discuss some preliminaries including definitions 
of representations of real algebraic numbers, points, and curve segments that
we use in our algorithms. In Section~\ref{sec:proof}, we prove Theorems~\ref{thm:main} and \ref{thm:basic}.

\section{Preliminaries}
\label{sec:prelim}
\subsection{Some notation}
\begin{notation}[Sign conditions]
\label{not:sign-condition}
For a finite set $\mathcal{P} \subset \R[X_1,\ldots,X_k]$ we will call any element
of $\{0,1,-1\}^{\mathcal{P}}$ a \emph{sign condition on $\mathcal{P}$}. For a semi-algebraic subset $Z \subset \R^k$,
we will denote by 
\[
\RR(\sigma) = \{\x \in Z \;\vert\; \sign(P(\x)) = \sigma(P), P \in \mathcal{P} \}.
\]

Given a sign condition $\sigma \in \{0,1,-1\}^{\mathcal{P}}$, we will denote by
$\overline{\sigma}$ the formula defined by 
\[
\overline{\sigma} = \bigwedge_{\sigma(P) = 0} (P = 0) \wedge \bigwedge_{\sigma(P) = 1} (P \geq 0) \wedge \bigwedge_{\sigma(P) = -1} (P \leq 0),
\]
and call $\overline{\sigma}$ the \emph{weak sign condition associated to $\sigma$}.
\end{notation}

\begin{notation}[Closure]
\label{not:closure}
For any semi-algebraic subset $S \subset \R^k$, we will denote by $\clos(S)$ the closure 
of $S$ (in the euclidean topology). It is a consequence of
the fact that theory of real closed fields admits 
quantifier-elimination that $\clos(S)$ is again a semi-algebraic subset of 
$\R^k$.
\end{notation}

\begin{notation}[Projections to coordinate subspaces]
\label{not:proj}
We will denote by $\pi_i:\R^k \rightarrow \R$, the projection on to the $i$-th coordinate.
More generally, for a subset $J \subset \{1,\ldots, k\}$, we denote by
$\pi_J:\R^k \rightarrow \R^{J}$, the projection on the coordinates indexed by $J$.
In particular, $\pi_{[1,i]}:\R^k \rightarrow \R^{[1,i]}$ will denote the projection onto the first $i$ coordinates.

For $1 \leq i \leq k$,  $S \subset  \R^k$, 
$J \subset \{1,\ldots,k\}$,
and $Z \subset \R^{J}$, we will denote by
$S_Z = \pi^{-1}_{J}(Z) \cap S$. If $Z = \{ \z \} \subset \R^J$, then we will write
$S_\z$ instead of $S_{\{\z\}}$.
\end{notation}

\subsection{Representations of points and curves}
While the algorithms that we describe have certain geometric underpinnings, it is important to remember that the points and
curve segments that we compute need to be represented algebraically, and hence
we need to specify the precise representations that we use. 

Moreover, we often 
fix a set of coordinates (say $(X_1,\ldots, X_i)$), to 
$\mathbf{t} = (t_1,\ldots,t_i)$ and call an algorithm recursively in the fiber
$\{\mathbf{t}\} \times \R^{k-i}$. This necessitates the introduction of triangular Thom encodings (which fixes points), and all our representations of points and curve segments
are introduced relative to such triangular Thom encodings.

The following definitions are adapted from \cite{BPRbook2}.

We begin with the representations of elements of $\R$ (which are algebraic over $\D$)
as roots of polynomials in $\D[X]$ with a given Thom encoding (cf. Definition~\ref{def:Thom-encoding} below).

\begin{definition}[Thom encoding, associated element of $\R$]
\label{def:Thom-encoding}
For $P \in \R[X]$ we will denote by 
\[
\Der(P) = \left(P,P',\ldots,P^{(\deg(P))}\right)
\]
the list of derivatives of $P$.

We will call a pair $\tau = (P, \sigma)$ with $ \sigma \in \{0,1,-1\}^{\Der (P)}$, 
the \emph{Thom encoding of $x \in \R$} 
\footnote{It is a consequence of the well-known Thom's lemma, that the Thom encoding uniquely
characterizes a root in $\R$ of a polynomial in $\D[X]$ (see for example, 
\cite[Proposition 2.27]{BPRbook2}).
},
if $\sigma (P) = 0$ and 
$\sigma(P^{(i)}) = \sign(P^{(i)}(x))$ for $0 \leq i \leq \deg(P)$.

We will sometimes abuse notation and sometime call $\sigma$ 
\emph{the Thom encoding of the root $x$ of $P$}.
We will denote $x$ by $\ass(\tau)$,  and call $\ass(\tau)$ 
\emph{the element of $\R$ associated to $\tau$}.

We will call $\deg(P)$ to be \emph{the degree of the Thom encoding $\tau$}, 
and denote it by 
$\deg(\tau)$.
\end{definition}

As remarked before we will often need to fix a block of variables
$(X_1,\ldots,X_i)$ to $\mathbf{t} = (t_1,\ldots,t_i) \in \R^i$, and perform
arithmetic operations in the ring $\D[\mathbf{t}]$. For this purpose we introduce the notion of a triangular Thom encoding whose associated point is an element of $\R^i$.

\begin{definition}[Triangular Thom encoding and associated point]
\label{def:triangular-systems}
A  \emph{triangular Thom encoding} 
$\mathcal{T} = (\mathbf{F},\boldsymbol{\sigma})$ of size $i$  is a tuple
(triangular system) of polynomials,
\[
\mathbf{F} = (f_1, \ldots, f_i)
\]
where $f_j \in \R[X_1,\ldots,X_j], 1 \leq j \leq i$,
and 
a tuple of Thom encodings $\boldsymbol{\sigma} = (\sigma_1,\ldots,\sigma_i)$,
with $\sigma_j \in \{0,1,-1\}^{\Der_{X_j}(f_j)}$,
such that for each $j, 1 \leq j \leq i$,
there exists $t_j \in \R$, such that
$t_j$ is a root of the polynomial $f_j(t_1,\ldots,t_{j-1},T_j)$
with Thom encoding $\sigma_j$. 
We call $(t_1,\ldots,t_i) \in \R^i$ the \emph{point associated to $\mathcal{T}$} 
and denote $\ass(\mathcal{T}) = (t_1,\ldots,t_i)$. 

Given a triangular Thom encoding 
\[
\mathcal{T}^+ = ((f_1,\ldots,f_{i+1}), (\sigma_1,\ldots,\sigma_{i+1})),
\]
with $\ass(\mathcal{T}^+) = (t_1,\ldots,t_{i+1})$,
we will sometimes call the pair $\tau = (f_{i+1},\sigma_{i+1})$ a 
\emph{Thom encoding over the triangular Thom encoding
$\mathcal{T} = ((f_1,\ldots,f_i), (\sigma_1,\ldots,\sigma_i))$}. 
In this case we will denote $t_{i+1}$ by $\ass(\tau)$ (generalizing
Definition~\ref{def:Thom-encoding}).

We will call $\max_{1 \leq j \leq i} \deg(F_j)$ the degree of the triangular
Thom encoding $\mathcal{T}$, and denote it by $\deg(\mathcal{T})$.

If $\tau = (f_{i+1},\sigma_{i+1})$ a 
is Thom encoding over a  triangular Thom encoding
\[
\mathcal{T} = ((f_1,\ldots,f_i), (\sigma_1,\ldots,\sigma_i)),
\]
we will call
$\deg_{T_{i+1}}(f_{i+1})$, the degree of $\tau$, and denote it by $\deg(\tau)$.

Finally, given a triangular Thom encoding $ \mathcal{T} = (\mathbf{F},\boldsymbol{\sigma})$
of size $i$, we denote by $\theta(\mathcal{T})$, the formula
\[
\bigwedge_{1 \leq j \leq i} \;\; \bigwedge_{0 \leq h \leq \deg_{X_j}(f_j)} \left(\sign(f_j^{(h)}) = \sigma_j(f_j^{(h)})\right).
\]
\end{definition}

\begin{notation}
\label{not:weak-Thom encoding}
Given a  triangular Thom encoding 
$\mathcal{T} = (\mathbf{F},\boldsymbol{\sigma})$ of size $i$ (following the same notation as in Definition~\ref{def:triangular-systems} above), we will denote
by $\overline{\boldsymbol{\sigma}}$, the closed formula obtained from $\boldsymbol{\sigma}$ by replacing each sign condition on the derivatives by the
corresponding weak inequality (i.e. replacing $f_j^{(h)} > 0$ by $f_j^{(h)} \geq 0$ and $f_j^{(h)}< 0$ by $f_j^{(h)} \leq 0$).
\end{notation}

It is a consequence of Thom's lemma \cite{BPRbook2} that:
\begin{lemma}
\label{lem:Thom}
Given a  triangular Thom encoding 
$\mathcal{T} = (\mathbf{F},\boldsymbol{\sigma})$ of size $i$, 
\[
\ass(\mathcal{T}) = \RR(\overline{\boldsymbol{\sigma}}, \R^i).
\]
\end{lemma}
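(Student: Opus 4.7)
The plan is to prove the lemma by induction on the size $i$ of the triangular Thom encoding, with the base case being the classical univariate Thom's lemma as recorded in \cite[Proposition 2.27]{BPRbook2}. Throughout, I would identify the point $\ass(\mathcal{T}) \in \R^i$ with the singleton it generates in $\R^i$.

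For the base case $i = 1$, one has a single polynomial $f_1 \in \R[X_1]$ together with a sign tuple $\sigma_1$ on the list $\Der_{X_1}(f_1) = (f_1, f_1', \ldots, f_1^{(\deg f_1)})$, and by definition $\sigma_1(f_1) = 0$ and $t_1$ is the unique root of $f_1$ whose derivative signs match $\sigma_1$. The formula $\overline{\sigma_1}$ imposes $f_1 = 0$ together with weak inequalities on each $f_1^{(h)}$ for $h \geq 1$. By the standard Thom's lemma, the realization of any weak sign condition on the list of successive derivatives of a univariate polynomial is a closed connected semi-algebraic subset of $\R$, and since this set contains $t_1$ and its points are constrained by $f_1 = 0$ to lie in a finite set, it must equal $\{t_1\} = \ass(\mathcal{T})$.

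For the inductive step, suppose the lemma holds for size $i-1$ and let $\mathcal{T} = (\mathbf{F}, \boldsymbol{\sigma})$ have size $i$. Writing $\mathcal{T}' = ((f_1,\ldots,f_{i-1}),(\sigma_1,\ldots,\sigma_{i-1}))$ and $\boldsymbol{\sigma}' = (\sigma_1,\ldots,\sigma_{i-1})$, the induction hypothesis gives $\RR(\overline{\boldsymbol{\sigma}'}, \R^{i-1}) = \{(t_1,\ldots,t_{i-1})\}$. Any $(x_1,\ldots,x_i) \in \RR(\overline{\boldsymbol{\sigma}}, \R^i)$ satisfies $\overline{\boldsymbol{\sigma}'}$ on its first $i-1$ coordinates, because those conjuncts of $\overline{\boldsymbol{\sigma}}$ involve only $X_1,\ldots,X_{i-1}$; hence $(x_1,\ldots,x_{i-1}) = (t_1,\ldots,t_{i-1})$. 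The remaining conjuncts then become the weak Thom encoding $\overline{\sigma_i}$ applied to the univariate polynomial $g(X_i) := f_i(t_1,\ldots,t_{i-1},X_i)$, using that partial differentiation with respect to $X_i$ commutes with evaluation in the earlier coordinates, so that $f_i^{(h)}(t_1,\ldots,t_{i-1},X_i) = g^{(h)}(X_i)$. Applying the base case to $g$ and $\sigma_i$ yields $x_i = t_i$, completing the induction.

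The only subtle point I anticipate is the bookkeeping around which variable the derivatives are taken with respect to, and checking that the sign conditions on $f_i^{(h)}$ viewed as polynomials in $\R[X_1,\ldots,X_i]$ translate, after freezing $X_1,\ldots,X_{i-1}$ to $(t_1,\ldots,t_{i-1})$, to the Thom encoding conditions on $g$ in $\R[X_i]$. This is immediate from the conventions in Definition~\ref{def:triangular-systems}, so no genuinely new ideas beyond univariate Thom are required.
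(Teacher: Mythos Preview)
Your proof is correct. The paper's proof is a one-line citation of \cite[Proposition 5.39]{BPRbook2} (Generalized Thom's Lemma), whereas you give a self-contained inductive argument that reduces to the univariate Thom's lemma \cite[Proposition 2.27]{BPRbook2}. These are not really different in substance: the Generalized Thom's Lemma is itself proved by exactly this kind of induction (freeze the base coordinates, apply the one-variable statement in the fiber), so your argument is essentially an unwinding of the citation rather than a genuinely different route. The advantage of your write-up is that it is self-contained and makes explicit the one bookkeeping point (that $\partial_{X_i}^h f_i$ specializes to $g^{(h)}$ after fixing the first $i-1$ coordinates, and that any degree drop in $g$ only introduces vacuous conditions); the advantage of the paper's version is brevity.
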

\begin{proof}
Follows directly from \cite[Proposition 5.39]{BPRbook2} (Generalized Thom's Lemma).
\end{proof}

We will represent points in $\R^k$ using  real univariate representations \cite[pp. 465]{BPRbook2} defined below. As explained previously, we need to define this notion with a block of variables fixed by a triangular Thom encoding. In this case the first $i$ coordinates of the point are fixed by a triangular Thom encoding, and the real univariate
representation specifies the remaining $k-i$ coordinates.

\begin{definition}[Real univariate representations over a triangular Thom encoding and associated point]
\label{def:real-univariate-rep}
Let $\mathcal{T} = (\mathbf{F},\boldsymbol{\sigma})$ be a triangular Thom encoding of size 
$i, 0 \leq i< k$.

A \emph{real univariate representation $u$ in $\R^k$ over $\mathcal{T}$} 
is a pair $(F,\sigma)$
where $F = (f,g_0, g_{i+1},\dots,g_k) \in \R[X_1,\ldots,X_i,T]$, 
with $f(\ass(\mathcal{T}),T),g_0(\ass(\mathcal{T}),T)$ co-prime, 
and
$\sigma$ a Thom encoding of a real root of $f(\ass(\mathcal{T}),T) \in \R[T]$. 

We denote by $\ass(u) \in \R^k$ the point
\begin{equation}
   \left(\ass(\mathcal{T}), \frac{g_{i+1} (\ass(\mathcal{T}),\ass(\tau) )}{g_{0} (\ass(\mathcal{T}),\ass(\tau) )}
  , \ldots , \frac{g_{k} (\ass(\mathcal{T}),\ass(\tau ))}{g_{0} (\ass(\mathcal{T}),\ass(\tau))} \right) \in \R^{k},
\end{equation}
where $\tau = (f,\sigma)$ (notice that $\tau$ is a Thom encoding over $\mathcal{T}$), 
and call $\ass(u)$
\emph{the point associated to $u$}.

We will call the pair $(D_1,D_2)$, denoted $\deg(u)$, the degree of $u$, 
where $D_1 = \deg(\mathcal{T})$ and $D_2$ is the maximum of 
$\deg_T(f), \deg_T(g_0), \ldots, \deg_T(g_k)$.
\end{definition}

We describe semi-algebraic curve segments by real univariate representations parametrized by one of the coordinates. As before the following definition assumes a triangular Thom encoding fixing the first $i$ coordinates. 

\begin{definition}[Curve segment representation over a triangular Thom encoding]
\label{def:curve-segment}
Let $\mathcal{T} = (\mathbf{F},\boldsymbol{\sigma})$ be a triangular Thom encoding of size 
$i, 0 \leq i \leq  k-1$.
A \emph{curve segment representation $\gamma$ above $\mathcal{T}$} consists of:
\begin{enumerate}[(a)]
\item
Thom encodings,  $\tau_1 = \tau_1(\gamma), \tau_2 = \tau_2(\gamma)$, over $\mathcal{T}$,
with $\ass(\tau_1) < \ass(\tau_2)$;
\item
a pair $(u,\rho) = (u(\gamma),\rho(\gamma))$, 
where 
\[
u = (f,g_0,g_{i+2},\ldots,g_k) \in \R[X_1,\ldots,X_{i+1},T]^{k-i+1},
\]
and 
\[
\rho \in \{0,1,-1\}^{\Der_T(f)},
\]
such that for every $x_{i+1} \in (\ass(\tau_1), \ass(\tau_2))$ 
there exists a real root $t(x_{i+1})$ of $f(\ass(\mathcal{T}),x_{i+1},T)$ with Thom encoding
$\rho$ and \[
f (\ass(\mathcal{T}),x_{i+1}, t(x_{i+1})), g_{0} (\ass(\mathcal{T}),x_{i+1}, t(x_{i+1}))
\]
co-prime.
\end{enumerate}
We will call $(\tau_1,\tau_2)$ the \emph{Thom encoding of the 
interval of definition of $\gamma$}.

The  semi-algebraic function $h$ which maps 
$x_{i+1} \in (\ass(\tau_1),\ass(\tau_2))$ to the point
of $\R^{k}$ defined by
\[ h (x_{i+1}) = \left(\ass(\mathcal{T}), x_{i+1}, \frac{g_{i+2} (\ass(\mathcal{T}),x_{i+1},t(x_{i+1}))}{g_{0} (\ass(\mathcal{T}),x_{i+1},t(x_{i+1}))} , \ldots ,
   \frac{g_{k} (\ass(\mathcal{T}),x_{i+1},t(x_{i+1}))}{g_{0} (\ass(\mathcal{T}),x_{i+1},t(x_{i+1}))} \right), \]
is a continuous injective semi-algebraic function, and we will denote its image
by $\ass(\gamma)$.

We will also call $\lim_{x_{i+1} \rightarrow \ass(\tau_1)+} h(x_{i+1})$ 
(resp. $\lim_{x_{i+1} \rightarrow \ass(\tau_2)-} h(x_{i+1})$) (if it is defined)  the 
\emph{left end-point} (resp. \emph{right end-point}) of $\gamma$.

We will call the pair $(D_1,D_2)$, denoted $\deg(\gamma)$, the degree of $\gamma$,
where 
\[
D_1 = \deg(\mathcal{T}),
\]
and $D_2$ is the maximum of
\[
\deg(\tau_1), \deg(\tau_2), \deg_{X_{i+1,T}}(f), 
\deg_{X_{i+1,T}}(g_{0}), \deg_{X_{i+1,T}}(g_{i+2}), \ldots,  \deg_{X_{i+1,T}}(g_k).
\]

\end{definition}

\begin{remark}
\label{rem:def:curve-segment}
Note that in Definition~\ref{def:curve-segment}, $\ass(\gamma)$ is a 
semi-algebraic set of dimension one,
and if it is bounded over $\R$, then its left and right endpoints are
well-defined. 

Also note that if $i = k-1$, and $\tau_1,\tau_2$ are two Thom encodings over 
$\mathcal{T}$ with $\ass(\tau_1) < \ass(\tau_2)$, then if $\gamma$ is the
curve segment representation over $\mathcal{T}$ defined by,
\begin{eqnarray*}
\tau_1(\gamma) &=& \tau_1, \\
\tau_2(\gamma) &=& \tau_2, \\
u(\gamma) &=& ((T,1), (0,1)),
\end{eqnarray*}
then 
$\ass(\gamma) = \{\ass(\mathcal{T})\}  \times (\ass(\tau_1(\gamma)),\ass(\tau_2(\gamma)))$.
Thus, an open interval whose end points are given by Thom encodings can be represented
by the curve segment representation given above and we will later use this fact
without mention.
\end{remark}

\section{Proofs of Theorems~\ref{thm:main} and \ref{thm:basic}}
\label{sec:proof}
\subsection{Outline of the proofs of Theorems~\ref{thm:main} and \ref{thm:basic}}
\label{subsec:outline-H1}
As in the case of the zero-th homology in Section~\ref{subsec:outline-H0}
we solve the problem in two steps (cf. Steps~\ref{itemlabel:prob-H0:1} and
\ref{itemlabel:prob-H0:2} in the solution of Problem~\ref{prob:H0}).

\begin{enumerate}[Step 1.]
    \item 
    \label{itemlabel:prob-H1:1}
    In the first step, we develop an algorithm (see Algorithm~\ref{alg:surjection} below) that
takes as input a $\mathcal{P}$-closed formula $\Phi$, and produces as output
a description of a semi-algebraic subset $\Gamma \subset S = \RR(\Phi)$, having dimension $\leq 1$, and such that the homomorphism $i_{*,1}:\HH_1(\Gamma) \rightarrow \HH_1(S)$ induced by the inclusion $i:\Gamma \hookrightarrow S$ is surjective, and the homomorphism
$i_{*,0}:\HH_0(\Gamma) \rightarrow \HH_0(S)$
is an isomorphism. This is equivalent to $\HH_q(S,\Gamma) = 0$ for $q=0,1$.

This is the analog of Step~\ref{itemlabel:prob-H0:1} in the outline for the 
solution of Problem~\ref{prob:H0} above.
The main idea behind the construction of $\Gamma$ comes from the classical
construction of a roadmap of a semi-algebraic set $S$ that goes as follows
(see also the description in \cite[Chapter 15]{BPRbook2}. One first computes curves parametrized by the $X_1$-coordinate, such that the set of curves 
so constructed meets, for every $x \in \R$, each semi-algebraically connected component of $S_x$. 
The algorithm is then called recursively on a certain finite set of ``slices'',
namely on sets of the form $S_x$, where $x \in \R$ varies over a certain finite set of 
\emph{distinguished values}. The set of distinguished values
includes values at which the connectivity of the set $S_{\leq x}$ changes.
Algorithm~\ref{alg:surjection} follows a similar paradigm. The main \emph{new feature} is that the set of distinguished values at which the 
algorithm makes recursive calls satisfies a stronger property than in the case of 
roadmap algorithms (cf. Proposition~\ref{prop:modified-curve-segment:correctness} below). This stronger property of the distinguished values allows us to prove inductively  
(using a little homological algebra via the ``Five-lemma'' \cite{Bourbaki}), 
the \emph{surjectivity}
of the map $i_{*,1}:\HH_1(\Gamma) \rightarrow \HH_1(S)$. The proof of the \emph{isomorphism}
$i_{*,0}:\HH_0(\Gamma) \rightarrow \HH_0(S)$ is the same as in the classical
construction of a roadmap of a semi-algebraic set.
It follows from the exact homology sequence of the pair $(S,\Gamma)$, that
$\HH_q(S,\Gamma) = 0$, for $0 \leq q \leq 1$. The semi-algebraic set 
$\Gamma$ is initially described as a union of points and curve segments 
(cf. Definitions~\ref{def:real-univariate-rep} and \ref{def:curve-segment})).
Converting this description into an equivalent \emph{closed} formula of complexity bounded singly exponentially 
(cf. Algorithm~\ref{alg:convert-to-closed} (Conversion of curve segment representations to closed formulas)) yields a proof of Theorem~\ref{thm:basic}.

    \item
    \label{itemlabel:prob-H1:2}
    In this step we use the fact that the semi-algebraic set
$\Gamma$ is semi-algebraically homeomorphic to the geometric realization $|G|$ of a finite graph $G$ having singly exponential size, and it is a relatively easy combinatorial task to choose a
basis of simple cycles, $\Gamma_1,\ldots, \Gamma_N$, for the cycle space of $G$. The images $[\Gamma_1],\ldots, [\Gamma_N]$ (here $[\Gamma_i]$ denotes the image of $\HH_1(|\Gamma_i|)$ in $\HH_1(S)$ under the homomorphism induced by the inclusion
$|\Gamma_i| \hookrightarrow S$), span $\HH_1(S)$ but are not necessarily linearly independent. We need to select a minimal spanning subset from amongst the 
$[\Gamma_1],\ldots,[\Gamma_N]$. For this purpose we use an algorithm for replacing
a given semi-algebraic set and a tuple of subsets by a simplicial complex and a tuple of corresponding subcomplexes, which are homologically $\ell$-equivalent
(cf. Definition~\ref{def:ell-equivalent})
for any fixed $\ell$, and which has singly exponentially bounded complexity \cite{Basu-Karisani} (cf. Algorithm~\ref{alg:simplicial-replacement} below). 
We use this algorithm in the case $\ell = 1 $. This is analogous to the  usage 
of a roadmap algorithm for overcoming the corresponding obstacle in the case of 
the zero-th homology (cf. Step~\ref{itemlabel:prob-H0:2} in the outline for the 
solution of Problem~\ref{prob:H0}).
\end{enumerate}

We now describe in detail the two steps outlined  above.

\subsection{Implementing Step~\ref{itemlabel:prob-H1:1}: computing surjection}
\label{subsec:surjection}
In this section we describe an algorithm which will accomplish Step~\ref{itemlabel:prob-H1:1} of the two-step algorithm sketched out in
Section~\ref{subsec:outline-H1}. 
Recall that the goal of this step is to obtain an algorithm with singly exponential 
complexity that
computes a description of a semi-algebraic subset
$\Gamma$ having dimension at most one,  
such that $\HH_q(S,\Gamma) = 0, q =0,1$.

As mentioned previously,
we will follow the same approach of constructing a roadmap of $S$ 
(see for instance \cite[Chapters 15 and 16]{BPRbook2}), 
however with one key additional property. 

\subsubsection{Morse-type partition}
In the (classical) construction of the roadmap, one makes recursive calls to the 
roadmap constructing algorithm at certain special fibers where the $X_1$ coordinate is fixed to certain values (called ``distinguished values'' in \cite{BPRbook2}). Thee distinguished values include the values $c$ of the $X_1$ coordinate where the connectivity
of the fiber $S_c$ can change. This is sufficient for construction of roadmaps, since
the main property of the roadmap is related to connectivity (the intersection of the roadmap with any semi-algebraically connected component of $S$ should be semi-algebraically connected). 
Since in this paper we are mainly concerned with the first homology group, we need our set
of distinguished values to satisfy a more stringent property. Fortunately, there exists a
singly exponential complexity algorithm for this purpose which we are going to utilize.

The following algorithm (without a triangular Thom encoding in the input)  appears in \cite[Algorithm 6]{Basu-Karisani}. However, as remarked before in our applications, it will be necessary to fix a block of $i$ variables by a triangular Thom encoding $\mathcal{T}$, and perform the
computations over the ring $\D[\ass(\mathcal{T})]$. Each arithmetic operation in 
the ring $\D[\ass(\mathcal{T})]$ costs $D^{O(i)}$ arithmetic operations in $\D$, where 
$D = \deg(\mathcal{T})$.

\begin{algorithm}[H]
\caption{(Morse partition)}
\label{alg:filtration}
\begin{algorithmic}[1]

\INPUT
\Statex{
\begin{enumerate}[(a)]
\item
$r = \frac{a}{b}, a,b \in \D, a,b >0$;
\item A triangular Thom encoding $\mathcal{T}$ of size $i, 0 \leq i \leq k$;
\item
A finite set $\mathcal{P} = \{P_1,\ldots,P_s \} \subset \D[X_1,\ldots,X_k]$;
\item
A $\mathcal{P}$-closed formula $\Phi$.
\end{enumerate}
}

\OUTPUT
 \Statex{
 An ordered tuple $\mathcal{F} = (\tau_1,\ldots,\tau_N)$
 of Thom encodings over $\mathcal{T}$,
 with associated points $t_1 < \cdots < t_N$, with $-r \leq t_1, t_N \leq r$, and 
 such that 
 for each $j, 1 \leq j \leq N-1$, and all $t \in [t_j, t_{j+1})$ the inclusion maps 
 \[
 S_{\{\ass(\mathcal{T})\} \times (-\infty, t_j]}\hookrightarrow S_{\{\ass(\mathcal{T})\} \times (\infty, t]}
 \]
 (resp.
 $ S_{\{\ass(\mathcal{T})\} \times [t_{j+1}, \infty)}\hookrightarrow S_{\{\ass(\mathcal{T})\} \times [t, \infty)}$) 
 induce isomorphisms 
 \[
 \HH_*(S_{\{\ass(\mathcal{T})\} \times (\infty, t_j]}) \rightarrow
 \HH_*(S_{\{\ass(\mathcal{T})\} \times (\infty, t]})
 \]
 (resp.
 $
 \HH_*(S_{\{\ass(\mathcal{T})\} \times [t_{j+1}, \infty)}) \rightarrow \HH_*(S_{\{\ass(\mathcal{T})\} \times [t, \infty)})
 $
 ),
 where 
 \[
 S = \RR(\Phi,\R^k) \cap \clos(B_k(\mathbf{0},r)).
 \]
  }

\PROCEDURE
\State{Call \cite[Algorithm 6]{Basu-Karisani}  (up to Step 13
and doing all computations in the ring
$\D[\ass(\mathcal{T}$)  twice with inputs,
$(-,r,\mathcal{P}, \Phi, X_{i+1})$ and $(-,r,\mathcal{P}, \Phi, -X_{i+1})$,
and let $\mathcal{F}_1, \mathcal{F}_2$ be the set of Thom encodings over 
$\mathcal{T}$ 
output (Part (a) of the output of \cite[Algorithm 6]{Basu-Karisani}).
}

\State{Using Algorithm  12.21 (Triangular Comparison of Roots) in \cite{BPRbook2} 
order the Thom encodings in 
$\mathcal{F}_1 \cup \mathcal{F}_2$,  and merge the two sets into one ordered tuple 
$
\mathcal{F} = (\tau_1,\ldots,\tau_N)
$
such that
\[
\ass(\tau_1) < \cdots < \ass(\tau_N).
\]
}
\State{Output $\mathcal{F}$.}

\COMPLEXITY
 The complexity of the algorithm is bounded by 
 \[
 D^{O(i)} (s d)^{O(k)},
 \]
 where $s = \card(\mathcal{P})$,
 $d = \max_{P \in \mathcal{P}} \deg(P)$,
 and $D = \deg(\mathcal{T})$.
 
 Moreover,  $\deg(\tau_i) \leq d^{O(k)}$ for $1 \leq i \leq N$, and 
 the size of $\mathcal{F}$ is bounded by $(s d)^{O(k)}$.
\end{algorithmic}
\end{algorithm}

We will need the following extra property of the output of Algorithm~\ref{alg:filtration}.
\begin{proposition}
\label{prop:alg:filtration}
For each $j, 1 \leq j \leq N-1$, and all $t \in [t_j, t_{j+1}]$ the inclusion maps 
 \[
 S_{\{\ass(\mathcal{T})\} \times \{t\}}\hookrightarrow S_{\{\ass(\mathcal{T})\} \times [t_j t_{j+1}]}
 \]
 induce isomorphisms 
 \[
 \HH_*(S_{\{\ass(\mathcal{T})\} \times \{t\}}) \rightarrow
 \HH_*(S_{\{\ass(\mathcal{T})\} \times [t_j t_{j+1}]}),
 \]
 where 
 \[
 S = \RR(\Phi,\R^k) \cap \clos(B_k(\mathbf{0},r)).
 \]
\end{proposition}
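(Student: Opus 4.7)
The plan is to deduce the proposition from the two output properties of Algorithm~\ref{alg:filtration} via a sequence of excision and Mayer-Vietoris arguments in the semi-algebraic category. The central observation is elementary: if a closed and bounded semi-algebraic set $X$ is written as $X = A \cup B$ with $A, B$ closed semi-algebraic, and $A \hookrightarrow X$ is a homology isomorphism, then the long exact sequence of the pair $(X, A)$ together with the excision isomorphism $\HH_*(X, A) \cong \HH_*(B, A \cap B)$ forces $A \cap B \hookrightarrow B$ to also be a homology isomorphism.

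First, I would use the hypothesis together with a 2-out-of-3 argument to extend the given sublevel iso statement to arbitrary pairs: for $t_j \leq t \leq t' < t_{j+1}$ the inclusion $S_{\{\ass(\mathcal{T})\} \times (-\infty,t]} \hookrightarrow S_{\{\ass(\mathcal{T})\} \times (-\infty, t']}$ is a homology isomorphism, and analogously for $t_j < t \leq t' \leq t_{j+1}$ the superlevel inclusion $S_{\{\ass(\mathcal{T})\} \times [t', \infty)} \hookrightarrow S_{\{\ass(\mathcal{T})\} \times [t, \infty)}$ is a homology isomorphism. Applying the central observation with $A = S_{\{\ass(\mathcal{T})\} \times (-\infty, t]}$ and $B = S_{\{\ass(\mathcal{T})\} \times [t, t']}$ then gives that $S_{\{\ass(\mathcal{T})\} \times \{t\}} \hookrightarrow S_{\{\ass(\mathcal{T})\} \times [t, t']}$ is a homology isomorphism, and the mirror choice gives the analogous statement for the right endpoint.

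Next, I would fix an interior point $t^* \in (t_j, t_{j+1})$ and apply Mayer-Vietoris to the decomposition
\[
S_{\{\ass(\mathcal{T})\} \times [t_j, t_{j+1}]} = S_{\{\ass(\mathcal{T})\} \times [t_j, t^*]} \cup S_{\{\ass(\mathcal{T})\} \times [t^*, t_{j+1}]},
\]
whose intersection is the fiber $S_{\{\ass(\mathcal{T})\} \times \{t^*\}}$. By the previous step this fiber includes as a homology isomorphism into each of the two pieces; the Mayer-Vietoris sequence collapses to a short exact sequence and a direct identification shows that $S_{\{\ass(\mathcal{T})\} \times \{t^*\}} \hookrightarrow S_{\{\ass(\mathcal{T})\} \times [t_j, t_{j+1}]}$ is itself a homology isomorphism. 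A further 2-out-of-3 argument applied to the factorization $S_{\{\ass(\mathcal{T})\} \times \{t^*\}} \hookrightarrow S_{\{\ass(\mathcal{T})\} \times [t_j, t^*]} \hookrightarrow S_{\{\ass(\mathcal{T})\} \times [t_j, t_{j+1}]}$ shows that the second inclusion is also an isomorphism, and composing with $S_{\{\ass(\mathcal{T})\} \times \{t_j\}} \hookrightarrow S_{\{\ass(\mathcal{T})\} \times [t_j, t^*]}$ handles $t = t_j$; the endpoint $t = t_{j+1}$ is symmetric, while values $t \in (t_j, t_{j+1})$ are covered by taking $t^* = t$ in the Mayer-Vietoris step.

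The main bookkeeping obstacle is the closed endpoint at $t_{j+1}$, which is not directly in the half-open range $[t_j, t_{j+1})$ on which the sublevel hypothesis is stated; reaching it requires invoking the superlevel hypothesis, and the mirror situation covers $t_j$. All applications of excision and Mayer-Vietoris are legitimate because the pairs involved consist of closed and bounded semi-algebraic sets, which admit semi-algebraic triangulations and hence satisfy the standard good-pair hypotheses.
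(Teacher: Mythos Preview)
Your proposal is correct but takes a more circuitous route than the paper. You proceed in stages: first derive fiber-into-half-slab isomorphisms from the sublevel/superlevel hypotheses via your excision ``central observation'', then glue two half-slabs by Mayer--Vietoris to obtain the fiber-into-full-slab isomorphism at an interior point $t^*$, and finally chain 2-out-of-3 arguments to reach the endpoints $t_j,t_{j+1}$.

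The paper does all of this in a single stroke. It compares two Mayer--Vietoris sequences: one for the decomposition
\[
S_{\ass(\mathcal{T})} \;=\; S_{\{\ass(\mathcal{T})\}\times(-\infty,t]} \cup S_{\{\ass(\mathcal{T})\}\times[t,\infty)},
\]
whose intersection is the fiber at $t$, and one for
\[
S_{\ass(\mathcal{T})} \;=\; S_{\{\ass(\mathcal{T})\}\times(-\infty,t_{j+1}]} \cup S_{\{\ass(\mathcal{T})\}\times[t_j,\infty)},
\]
whose intersection is the slab over $[t_j,t_{j+1}]$. The two unions coincide (both equal $S_{\ass(\mathcal{T})}$), and the inclusions of the pieces are homology isomorphisms by the algorithm's output together with a 2-out-of-3. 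A single application of the Five-lemma then yields the isomorphism on the intersections directly, for every $t\in[t_j,t_{j+1}]$ at once, with no separate treatment of interior versus endpoint values.

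What your approach buys is that it avoids the Five-lemma entirely and stays with the most elementary long-exact-sequence manipulations; what the paper's approach buys is brevity and uniformity in $t$. One small correction to your write-up: in your ``mirror'' step the constraint should read $t_j \le t$ rather than $t_j < t$, since the superlevel hypothesis in the algorithm's output is stated for $t\in[t_j,t_{j+1})$ and in particular includes $t=t_j$; without this your Mayer--Vietoris step at $t^*$ would not have the half-slab isomorphism on the $[t_j,t^*]$ side.
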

\begin{proof}
Let 
\begin{eqnarray*}
A_1 &=& S_{\{\ass(\mathcal{T})\} \times (-\infty,t]},\\
A_2 &=& S_{\{\ass(\mathcal{T})\} \times [t, \infty)},\\
B_1 &=& S_{\{\ass(\mathcal{T})\} \times (-\infty,t_{j+1}]},\\
B_2 &=& S_{\{\ass(\mathcal{T})\} \times [t_j, \infty)}.
\end{eqnarray*}
Then, $A_h \subset B_h, h=1,2$ and
\begin{eqnarray*}
A_1 \cap A_2 &=& S_{\{\ass(\mathcal{T})\} \times \{t\}}, \\
B_1 \cap B_2 &=& S_{\{\ass(\mathcal{T})\} \times [t_j t_{j+1}]}, \\
\end{eqnarray*}
and 
\[
A_1 \cup A_2 = B_1 \cup B_2 = S_{\ass(\mathcal{T})}.
\]
Moreover the properties of the output of Algorithm~\ref{alg:filtration} imply that
that the homomorphisms $\HH_*(A_h) \rightarrow \HH_*(B_h), h = 1,2 $ induced by inclusions are isomorphisms. The Mayer-Vietoris exact sequence in homology (see for example, \cite[Theorem 6.35]{BPRbook2}) then yields the
following commutative diagram with exact rows and vertical homomorphisms induced by inclusion (where 
$A_{12}$ (resp. $B_{12}$) denotes $A_1\cap A_2 $ (resp. $B_1 \cap B_2$), and
$A^{12}$ (resp. $B^{12}$) denotes $A_1\cup A_2 $ (resp. $B_1 \cup B_2$)):
\[
\xymatrix{
\HH_{m+1}(A_1) \oplus \HH_{m+1}(A_2) \ar[r]\ar[d] & \HH_{m+1}(A^{12}) \ar[r]\ar[d] & \HH_m(A_{12}) \ar[d]\ar[r]&  \HH_m(A_1) \oplus \HH_m(A_2) \ar[r]\ar[d]& 
\HH_m(A^{12})\ar[d] \\
\HH_{m+1}(B_1) \oplus \HH_{m+1}(B_2) \ar[r] & \HH_{m+1}(B^{12}) \ar[r] & \HH_m(B_{12})\ar[r]&  \HH_m(B_1) \oplus \HH_m(B_2) \ar[r]& 
\HH_m(B^{12})
}.
\]
It follows from the fact that homomorphisms $\HH_*(A_h) \rightarrow \HH_*(B_h), h = 1,2 $ induced by inclusions are isomorphisms, 
and the fact that $A^{12} = B^{12}$,
that all the vertical arrows other than the 
middle one are isomorphisms. Hence, by the ``Five-lemma''
(see for example \cite{Bourbaki})
the middle arrow is also an isomorphism, thus proving the proposition.
\end{proof}

We will also need the 
following algorithm  that  takes as input a quantifier-free formula $\Phi$ and outputs 
$r \in \D, r >0$, such that $\RR(\Phi,\R^k)$ is homologically equivalent to
$\RR(\Phi,\R^k) \cap \clos(\B(\mathbf{0},r))$.

We need a definition.

\begin{notation}
\label{not:puiseux}
  For $\R$ a real closed field we denote by $\R \left\langle \eps
  \right\rangle$ the real closed field of algebraic Puiseux series in $\eps$
  with coefficients in $\R$. As a real closed field $\R\la\eps\ra$ is uniquely ordered, and this order extends the order on $\R$. It is the unique order in which $\eps > 0$
  and $\eps < x$ for every $x \in \R, x >0$. In particular, the subring $\D[\eps] \subset \R\la\eps\ra$ is ordered by: 
  $\sum_{i \geq 0 } a_i \eps^i > 0$ if and only if $a_p > 0$ where $p = \min \{i \;\vert \; a_i \neq 0 \}$.
\end{notation}

\begin{notation}
\label{not:extension}
  If $\R'$ is a real closed extension of a real closed field $\R$, and $S
  \subset \R^{k}$ is a semi-algebraic set defined by a first-order formula
  with coefficients in $\R$, then we will denote by $\E(S, \R') \subset \R'^{k}$ the semi-algebraic subset of $\R'^{k}$ defined by
  the same formula.
 \footnote{Not to be confused with the homological functor $\mathrm{Ext}(\cdot,\cdot)$.}
 It is well-known that $\E(S, \R')$ does
  not depend on the choice of the formula defining $S$ {\cite{BPRbook2}}.
\end{notation}

\begin{notation} 
\label{not:cauchy}
For $P =  a_p T^p + \cdots + a_q T^q, p \geq q \in \D[T]$,  $a_p a_q \neq 0$, we denote
\[
c'(P) = \left((p+1) \cdot \sum_i \frac{a_i^2}{a_q^2}\right)^{-1}.
\]
\end{notation}

We will use the following lemma.
\begin{lemma}
\label{lem:cauchy}
With the same notation as in Notation~\ref{not:cauchy}, if $x \neq 0, x\in \R$ is a root of $P$, the $|x| > c'(P)$. 
\end{lemma}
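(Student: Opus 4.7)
The plan is a standard Cauchy-type lower bound for the modulus of a nonzero root. Write $P(T) = T^q R(T)$ with $R(T) = a_q + a_{q+1} T + \cdots + a_p T^{p-q}$; since $x \neq 0$ is a root of $P$, it is also a root of $R$, and we necessarily have $p > q$ (otherwise $R$ is the nonzero constant $a_q$). The relation $R(x) = 0$ rewrites as $a_q = -\sum_{i=q+1}^{p} a_i x^{i-q}$, and applying the Cauchy--Schwarz inequality to this sum of $p - q$ terms yields
\[
a_q^2 \;\leq\; (p - q) \sum_{i=q+1}^{p} a_i^2 \, x^{2(i-q)}.
\]

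I would then split into two cases. The case $|x| \geq 1$ is immediate once one observes that $c'(P) < 1$, which holds because $\sum_{i=q}^{p}(a_i/a_q)^2 \geq 1 + (a_p/a_q)^2 > 1$ and $p + 1 \geq 2$. In the case $|x| < 1$, the bound $x^{2(i-q)} \leq x^2$ for each $i > q$, together with the trivial enlargements $p - q \leq p + 1$ and $\sum_{i > q} a_i^2 \leq \sum_{i \geq q} a_i^2$, converts the Cauchy--Schwarz inequality into
\[
a_q^2 \;\leq\; (p+1)\, x^2 \sum_{i=q}^{p} a_i^2,
\]
which rearranges to $x^2 \geq c'(P)$. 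Since $0 < c'(P) < 1$, one has $c'(P) > c'(P)^2$, hence $x^2 > c'(P)^2$, and therefore $|x| > c'(P)$.

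The only mildly subtle point is producing the \emph{strict} inequality over an arbitrary real closed field without appealing to square roots; this is handled by the elementary observation $c'(P) > c'(P)^2$ whenever $0 < c'(P) < 1$. The particular shape of $c'(P)$ (the factor $p+1$ rather than $p-q$, and the sum taken over the full range $q \leq i \leq p$ including the trivial $i = q$ term) is tailored precisely to absorb the case split at $|x| = 1$ and the extra term needed to pass from $\sum_{i > q}$ back to $\sum_{i \geq q}$.
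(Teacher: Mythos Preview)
Your argument is correct. The factorization $P(T)=T^qR(T)$, the Cauchy--Schwarz step, the case split at $|x|=1$, and the passage from $x^2\geq c'(P)$ to $|x|>c'(P)$ via $c'(P)>c'(P)^2$ (valid since $0<c'(P)<1$, which you verified using $a_p\neq 0$ and $p+1\geq 2$) are all sound over an arbitrary real closed field.

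As for comparison with the paper: the paper does not give a proof at all, but simply cites Lemma~10.7 of the Basu--Pollack--Roy textbook. Your proof is essentially the standard argument behind that lemma, so you have supplied a self-contained version of what the paper defers to a reference.
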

\begin{proof}
See Lemma 10.7 in \cite{BPRbook2}.
\end{proof}

\begin{algorithm}[H]
\caption{(Big enough radius)}
\label{alg:big-enough-radius}
\begin{algorithmic}[1]
\INPUT
\Statex{
\begin{enumerate}[(a)]
\item
A triangular Thom encoding 
$\mathcal{T} = (\mathbf{F},\boldsymbol{\sigma})$ of size $i, 0 \leq i \leq k$;
\item
a finite set $\mathcal{P} \subset \D[X_1,\ldots,X_k]$;
\item
a $\mathcal{P}$-closed formula $\Phi$ such that $\RR(\Phi,\R^k)$ is bounded.

\end{enumerate}
}
\OUTPUT
\Statex{
Elements $a,b \in \D[\ass(\mathcal{T})], a,b > 0$, such that the inclusion
map 
\[\RR(\Phi,\R^k)_{\ass(\mathcal{T})} \cap \clos(\{\ass(\mathcal{T})\} \times B_{k-i}(\mathbf{0},r)) \hookrightarrow \RR(\Phi,\R^k)_{\ass(\mathcal{T})},
\]
where $r = \frac{a}{b}$,
induces an isomorphism 
\[
\HH_*\left((\RR(\Phi,\R^k) \cap \clos(\{\ass(\mathcal{T})\} \times B_{k-i}(\mathbf{0},r)))_{\ass(\mathcal{T})}\right) \rightarrow \HH_*\left(\RR(\Phi,\R^k)_{\ass(\mathcal{T})}\right).
\]
}

\PROCEDURE
\State{$P_1 \gets Y - (X_{i+1}^2 + \cdots + X_k^2)$.}
\State{$P_2 \gets  (\eps^2(X_{i+1}^2 + \cdots + X_k^2) - 1)$.}
\State{$\widetilde{\Phi} \gets \Phi \wedge (P_1 = 0) \wedge (P_2 \leq 0) $.}
\State{$\widetilde{\D} \gets \D[\eps]$ where $D[\eps] \subset \R\la\eps\ra$ (cf. Notation~\ref{not:puiseux}).}
\State{Call Algorithm~\ref{alg:filtration} (Morse partition)  treating $Y$ as the $(i+1)$-st coordinate,
with computations occurring in the domain $\widetilde{\D}$,
and with input $\mathcal{T},\mathcal{P} \cup \{P_1,P_2\}, \widetilde{\Phi}$.}
\State{$\mathcal{Q} \in \D[\ass(\mathcal{T})][\eps] \gets$ the set of polynomials whose signs are determined during the call to  Algorithm~\ref{alg:filtration} (Morse partition) in the previous step.}
\label{line:alg:big-enough-radius:0}
\State{$c = \frac{b}{a} \gets \min_{Q \in \mathcal{Q}} c'(Q), a, b \in \D[\ass(\mathcal{T})]$ (cf. Notation~\ref{not:cauchy}).}
\label{line:alg:big-enough-radius:1}
\State{$r \gets \frac{a}{b}$.}
\State{Output $a,b$.}

\COMPLEXITY
The complexity of the algorithm is bounded by 
 \[
 D^{O(i)} (s d)^{O(k)},
 \]
 where $s = \card(\mathcal{P})$,
 $d = \max_{P \in \mathcal{P}} \deg(P)$,
 and $D = \deg(\mathcal{T})$.
\end{algorithmic}
\end{algorithm} 

\begin{proof}[Proof of Correctness of Algorithm~\ref{alg:big-enough-radius}]
Note that the formula $\widetilde{\Phi}$ defines a semi-algebraic subset 
\[
\widetilde{S} \subset \R\la\eps\ra^{i}  \times \clos(B_{k-i}(\mathbf{0},\frac{1}{\eps})).
\]
It follows from the conic structure theorem at infinity of semi-algebraic sets
(see for example \cite[Proposition 5.49]{BPRbook2})
and the Tarski-Seidenberg transfer principle (see for example \cite[Theorem 2.80]{BPRbook2}) that $\E(S,\R\la\eps\ra)_{\ass(\mathcal{T})}$ is semi-algebraically homeomorphic to
$\widetilde{S}_{\ass(\mathcal{T})}$, and hence 
\begin{equation}
\label{eqn:proof:alg:big-enough-radius:1}
    \HH_*\left(\E(S,\R\la\eps\ra)_{\ass(\mathcal{T})}\right) \cong \HH_*\left(\widetilde{S}_{\ass(\mathcal{T})}\right).
\end{equation}

It follows from the way $r$ is computed in the algorithm in 
Line~\ref{line:alg:big-enough-radius:1} and Lemma~\ref{lem:cauchy},  that $c = \frac{1}{r}$ is strictly positive, and
smaller than all strictly positive roots in $\R$ of the polynomials in $\mathcal{Q}$ (defined in 
Line~\ref{line:alg:big-enough-radius:0} of the algorithm).
It now follows from the ordering of the ring
$\D[\ass(\mathcal{T})][\eps]$ (cf.  Notation~\ref{not:puiseux})
that 
all the branchings in the algorithm, each of which depend on the determination of
the sign of an element in $\D[\ass(\mathcal{T})][\eps]$, remain the same
if $c$ is substituted for $\eps$.

It now follows from the correctness of Algorithm~\ref{alg:filtration} (Morse partition),
that the inclusion \[
\widetilde{S}_{\{\ass(\mathcal{T})\} \times (-\infty,r]} \hookrightarrow \widetilde{S}_{\{\ass(\mathcal{T})\} \times (-\infty,\frac{1}{\eps}]} = 
\widetilde{S}_{\ass(\mathcal{T})}
\]
(with $r$ as computed in the algorithm)
induces an isomorphism
\begin{equation}
\label{eqn:proof:alg:big-enough-radius:2}
 \HH_*\left(\widetilde{S}_{\{\ass(\mathcal{T})\} \times (-\infty,r]}\right)   \rightarrow
 \HH_*\left(\widetilde{S}_{\ass(\mathcal{T})}\right).
\end{equation}

Moreover, for any $r' > 0$,
$\widetilde{S}_{\{\ass(\mathcal{T})\} \times (-\infty,r']}$ is semi-algebraically homeomorphic to $\E(S,\R\la\eps\ra) \cap \{\ass(\mathcal{T})\} \times \clos(B_{k-i}(\mathbf{0},r'))$, and hence
\begin{equation}
\label{eqn:proof:alg:big-enough-radius:3}
    \HH_*\left(\widetilde{S}_{\{\ass(\mathcal{T})\} \times (-\infty,r]}\right) \cong \HH_*\left(\E(S,\R\la\eps\ra) \cap \{\ass(\mathcal{T})\} \times \clos(B_{k-i}(\mathbf{0},r))\right).
\end{equation}
Finally, for any closed semi-algebraic set $X \subset \R^k$, 
\begin{equation}
\label{eqn:proof:alg:big-enough-radius:4}
\HH_*(X) \cong \HH_*(\E(X,\R\la\eps\ra)).
\end{equation}
The isomorphisms \eqref{eqn:proof:alg:big-enough-radius:1}, \eqref{eqn:proof:alg:big-enough-radius:2}, \eqref{eqn:proof:alg:big-enough-radius:3}, and
\eqref{eqn:proof:alg:big-enough-radius:4}
imply that
\[
\HH_*\left(S_{\ass(\mathcal{T})}\right) \cong 
\HH_*(S\cap \{\ass(\mathcal{T})\} \times \clos(B_{k-i}(\mathbf{0},r))).
\]
This proves the correctness of Algorithm~\ref{alg:big-enough-radius}.
\end{proof}
\begin{proof}[Complexity of Algorithm~\ref{alg:big-enough-radius}]
The stated complexity follows from the complexity of Algorithm~\ref{alg:filtration} (Morse partition).
\end{proof}

\subsubsection{Constructing curve segments}
\label{subsec:curve-segments}
Another basic building block of our algorithm is an algorithm that takes as input
a closed formula defining a semi-algebraic set, and produces as output
a set of Thom encodings with associated elements 
$t_1 < \cdots < t_N$, and over each open interval 
$(t_j,t_{j+1})$, a set of semi-algebraic curves parametrized by the $X_1$ coordinate, 
with $t_{j} < X_1 < t_{j+1}$, described by  curve segment representations, and
for each $j$, a set of points (described by real univariate presentations) whose
first coordinate equals $t_j$, such that the set of left and right end points of the 
curves are contained in the set of points. 
The main properties that these points and curve segments need to satisfy are listed in Proposition~\ref{prop:modified-curve-segment:correctness}, stated after we 
describe the algorithm.
Two remarks are in order.

First, note that this construction of curve segments parametrized by one of the 
coordinates (in our case $X_1$), satisfying properties very similar to those listed
in Proposition~\ref{prop:modified-curve-segment:correctness}, is a first step in the
classical construction of roadmaps of semi-algebraic sets 
\cite{Canny93a, GR92,  GV92, HRS93, BPR99}.
One important extra property that
we require is that the partition of the $X_1$-axis satisfies the  property of the output of Algorithm~\ref{alg:filtration} (Morse partition),
which is a little stronger than what is needed for such a partition
in the classical roadmap constructions.  
We also ensure that distinct curves that we construct do not
intersect (cf. Proposition~ \ref{prop:modified-curve-segment:correctness}, Part~\eqref{itemlabel:prop:modified-curve-segment:correctness:4}). In the classical roadmap construction, this extra care is unnecessary.

A second remark  is that
for reasons explained previously, in the description of the algorithm
we will assume that the first $i$ coordinates
are fixed (by a triangular Thom encoding), and the ``first coordinate'' in the description given in the beginning of this subsection should be 
replaced by the ``$(i+1)$-st coordinate''. This is in fact no different
from the case of classical algorithms for computing roadmaps, which also relies on
recursive calls in which a first block of $i$ coordinates are fixed in each nested recursive with nesting depth $i$.

Finally, we note that since the main steps of the following algorithm
are quite similar to the corresponding steps
in several classical algorithms for constructing roadmaps, we omit a few details, 
giving pointers to algorithms in \cite{BPRbook2} to be used for implementing
them in an efficient way.

\begin{algorithm}[H]
\caption{(Curve segments)}
\label{alg:modified-curve-segment}
\begin{algorithmic}[1]
\INPUT
\Statex{
\begin{enumerate}[1.]
\item
A triangular Thom encoding 
$\mathcal{T} = (\mathbf{F},\boldsymbol{\sigma})$ of size $i$ with $0 \leq i \leq k-1$;
\item
a finite set $\mathcal{P} \subset \D[X_1,\ldots,X_k]$;
\item
a $\mathcal{P}$-closed formula $\Phi$ such that $\RR(\Phi,\R^k)$ is bounded.

\end{enumerate}
}

\OUTPUT
 \Statex{
\begin{enumerate}[1.]
\item A finite tuple  $\mathcal{F} =(\tau_1,\ldots,\tau_N)$ of Thom encodings over $\mathcal{T}$, with
    \[
    t_1 = \ass(\tau_1) < \cdots < t_N = \ass(\tau_N);
    \]
    \item for each $j, 1 \leq j \leq N-1$, an indexing set $I_j$, 
    and a finite tuple $\mathcal{C}_j = (\gamma_h)_{h \in I_j}$ of curve segment representations over $\mathcal{T}$,  such that for each $h \in I_j$ 
    \[
    \tau_1(\gamma_h) = \tau_j, \tau_2(\gamma_h) = \tau_{j+1}
    \]
    (we will let $C_{0} = \mathcal{C}_{N+1} = \emptyset$);
    \item for each $j, 1 \leq j \leq N$
a finite set $\mathcal{U}_j$ of real univariate representations over $\mathcal{T}$, such that for each $u \in U_j$, 
 the set of points $\{\ass(u) \;\vert \; u \in \mathcal{U}_j \}$ includes  the set of end-points of the curve segment representations  in $C_{j-1} \cup C_{j}$;
    \item mappings $L_j,R_{j-1}: I_j \rightarrow \mathcal{U}_j$, such that
    $\ass(L_j(h))$ is the left end-point of $\gamma_h$, and $\ass(R_j(\gamma_h))$ is the right end-point of $\gamma_h$.
\end{enumerate}
}

\PROCEDURE
\State{Call Algorithm~\ref{alg:big-enough-radius} (Big enough radius) with input $(\mathcal{T},\mathcal{P},\Phi)$ and compute $a,b \in \D[\ass(\mathcal{T})]$.}

\State{$\mathcal{P} \gets \mathcal{P} \cup \{b^2(X_{i+1}^{2} + \cdots +X_{k}^{2}) - a^2\}$.}
\State{$\Phi \gets \Phi \wedge b^2(X_{i+1}^{2} + \cdots +X_{k}^{2}) - a^2 \leq 0)$.}

\State{Call Algorithm~\ref{alg:filtration} (Morse partition) with $(\mathcal{T},r,\mathcal{P},\Phi)$ as input and obtain a finite set $\mathcal{F}$ of Thom encodings over $\mathcal{T}$ as output.} 
\label{line:alg:surjection:filtration}
\State{Call Algorithm 16.12 (Bounded Roadmap) in \cite{BPRbook2} with input $\mathcal{P}, \Phi$ and the radius $r$,  performing all computations over the ring $\D[\ass(\mathcal{T})]$.}
\label{line:alg:modified-curve-segment:roadmap}
\State{Retain from the output of the previous step, 
the set $\mathcal{C}$ of curve segment representations
over $\mathcal{T}$ parametrized by $X_{i+1}$, and 
the set $\mathcal{U}$ of  real univariate representations over $\mathcal{T}$.
}

\State{For each pair $\gamma, \gamma' \in \mathcal{C}$, compute a description of 
$\ass(\gamma) \cap \ass(\gamma')$ using Algorithm 14.6 (Parametrized Sign Determination) in \cite{BPRbook2}, 
and refine the set $\mathcal{C}$ to have the property that $\ass(\gamma) \cap \ass(\gamma') = \emptyset$, for all $\gamma,\gamma' \in \mathcal{C}, \; \gamma \neq \gamma'$.}
\label{line:alg:surjection:intersection}

\State{Augment the set $\mathcal{U}$ to also contain the set of real univariate representations over $\mathcal{T}$ whose associated points are the end points
of the curve segments in $\mathcal{C}$.
}

\State{Compute Thom encodings over $\mathcal{T}$ whose associated values are the $X_{i+1}$-coordinates of the asssociated points of $\mathcal{U}$, and add these to $\mathcal{F}$.
}

\State{Using Algorithm  12.21 (Triangular Comparison of Roots) in \cite{BPRbook2} 
order the Thom encodings in 
$\mathcal{F}$, and let the associated values be $t_1 = \ass(\tau_1),\ldots,t_N = \ass(\tau_N)$. 
\[
\mathcal{F} \gets  (\tau_1,\ldots,\tau_N). 
\]
}

\algstore{myalg}
\end{algorithmic}
\end{algorithm}
 
\begin{algorithm}[H]
\begin{algorithmic}[1]
\algrestore{myalg}

\State{Further refine $\mathcal{C}$, such that for each $\gamma \in \mathcal{C}$,
there exists $j, 1 \leq j < N$, such that 
\[
\tau_1(\gamma) = \tau_j, \tau_2(\gamma) = \tau_2.
\]
}

\State{Augment the set $\mathcal{U}$ to include the left and the right end points of each
$\gamma \in \mathcal{C}$.}

\For{ each $j, 1 \leq j \leq N-1$} 
    \State{Let $I_j$ denote a set indexing the set of curve segment representations $\gamma \in \mathcal{C}$, such that $\tau_1(\gamma) = \tau_j, \tau_2(\gamma) = \tau_{j+1}$. For $h \in I_j$, we denote the corresponding curve segment representation in $\mathcal{C}$ by $\gamma_h$.} 
    

    \State{$\mathcal{C}_j \gets (\gamma_h)_{h \in I_j}$.}

    \State{$\mathcal{U}_j \gets \{u \in \mathcal{U} \; \vert \; \pi_{i+1}(\ass(u)) = \ass(\tau_j)\}$.}
    \State{Compute the maps $L_j: I_j \rightarrow \mathcal{U}_j, R_j: I_j \rightarrow \mathcal{U}_{j+1}$, such that $\ass(L_j(h))$ is the left end-point of $\gamma_h$, and 
    $\ass(R_j(h))$ is the right endpoint of $\gamma_h$.}
\EndFor

\State{Output $\left(\mathcal{F}, (I_j,\mathcal{C}_j, \mathcal{U}_j, L_j,R_j)_{j \in [1,N]}\right)$.}

 \COMPLEXITY
The complexity of the algorithm is bounded by $ D^{O(i)}( s d)^{O((k-i)^2)}$, where $s = \card(\mathcal{P}), d = \max_{P \in \mathcal{P}} \deg(P)$, and $D = \deg(\mathcal{T})$.

The degrees of the curve segment representations in the various $\mathcal{C}_j$,
and the degrees of the real univariate representations in $\mathcal{U}_j$  are both bounded by $(D,d^{O(k-i)})$. Finally the sum of the cardinalities 
\[
\sum_{j =1}^{N} (\card(\mathcal{C}_j) + \card(\mathcal{U}_j))
\]
is bounded by  $( s d)^{O(k-i)}$.
\end{algorithmic}
\end{algorithm} 
 
\begin{proposition}
\label{prop:modified-curve-segment:correctness}
The output of Algorithm~\ref{alg:modified-curve-segment} (Curve segments) satisfies the following:
\begin{enumerate}[(a)]
    \item
    \label{itemlabel:prop:modified-curve-segment:correctness:1}
For each $j,\; 1 \leq j \leq N-1$, and all $t \in [\ass(\tau_j), \ass(\tau_{j+1}))$ the inclusion maps 
\[
S_{\{\ass(\mathcal{T})\} \times (-\infty, \ass(\tau_j)]}\hookrightarrow S_{\{\ass(\mathcal{T})\} \times (-\infty,t]},
\]
\[
S_{\{\ass(\mathcal{T})\} \times \{t\}}\hookrightarrow S_{\{\ass(\mathcal{T})\} \times [\ass(\tau_j), \ass(\tau_{j+1})},
\]
are homological equivalences;
    \item 
    \label{itemlabel:prop:modified-curve-segment:correctness:2}
    for each $h \in I_j$, $\ass(\gamma_h) \subset S = \RR(\Phi,\R^k)$;
    \item 
    \label{itemlabel:prop:modified-curve-segment:correctness:3}
    for each $t  \in (\ass(\tau_j),\ass(\tau_{j+1}))$ and each semi-algebraically connected component
    $C$ of $S_{\y}$,
    where $\y = (\ass(\mathcal{T}),t) \in \R^{i+1}$,
    there exists $h \in I_j$ such that $\ass(\gamma_h)_{\y} \in C$;
    \item 
    \label{itemlabel:prop:modified-curve-segment:correctness:4}
    if $h_1,h_2 \in I_j$ with $h_1 \neq h_2$, then 
    $\ass(\gamma_{h_1}) \cap \ass(\gamma_{h_2}) = \emptyset$.
\end{enumerate}
\end{proposition}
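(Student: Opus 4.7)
The plan is to verify the four properties one at a time, each traceable to a specific step of Algorithm~\ref{alg:modified-curve-segment}. Throughout, I write $S = \RR(\Phi,\R^k)$ (after the bounding intersection with $\clos(B_k(\mathbf{0},r))$ has been folded into $\Phi$).

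First I would establish (a). The set $\mathcal{F}$ returned by Algorithm~\ref{alg:filtration} in Line~\ref{line:alg:surjection:filtration} satisfies, by Proposition~\ref{prop:alg:filtration}, that between two consecutive of its associated values the inclusions of any single fiber and of the left half-space into the intermediate strip induce isomorphisms on $\HH_*$. In the subsequent steps the tuple $\mathcal{F}$ is only \emph{refined} by inserting more Thom encodings corresponding to the $X_{i+1}$-coordinates of points in $\mathcal{U}$. The key observation is that refining such a Morse-type partition preserves the homological equivalence property: if $[t_j, t_{j+1}]$ gets subdivided by a new value $t'$, then Proposition~\ref{prop:alg:filtration} applied to the original partition gives that the inclusion of $S_{\{\ass(\mathcal{T})\}\times\{t\}}$ into $S_{\{\ass(\mathcal{T})\} \times [t_j,t_{j+1}]}$ is an isomorphism for every $t \in [t_j,t_{j+1}]$, whence by the two-out-of-three principle, the same holds for the subintervals $[t_j,t']$ and $[t',t_{j+1}]$. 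A parallel argument handles the half-space inclusions, using the Mayer--Vietoris diagram chase from the proof of Proposition~\ref{prop:alg:filtration}.

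Next I would handle (b) and (c) together. Both come from the correctness of Algorithm 16.12 (Bounded Roadmap) of \cite{BPRbook2} invoked in Line~\ref{line:alg:modified-curve-segment:roadmap}, performed over the ring $\D[\ass(\mathcal{T})]$. That algorithm outputs curve segment representations whose associated sets lie in $S$, giving (b) directly; and it guarantees that each semi-algebraically connected component of each fiber $S_\y$ is met by some curve in the roadmap, which is the defining property $\mathrm{RM}_2$ of Definition~\ref{def:roadmap}. To pass this property through the later subdivisions of $\mathcal{C}$ (at intersection points in Line~\ref{line:alg:surjection:intersection} and at the new Thom encodings introduced in $\mathcal{F}$), I would note that subdividing a curve segment at a finite set of parameter values does not change its image, only the indexing; hence if some curve of the original roadmap passed through the component $C$ of $S_\y$, one of its sub-segments $\gamma_h$ with $h\in I_j$ (where $(\ass(\tau_j),\ass(\tau_{j+1}))$ is the unique subinterval containing $t$) still does, giving (c).

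Finally, (d) is immediate from the explicit refinement in Line~\ref{line:alg:surjection:intersection}, together with the additional subdivision done after the Thom encodings are ordered: pairwise intersections of curve images are identified via Algorithm 14.6 of \cite{BPRbook2}, the curves are split at these intersection points, and the resulting sub-segments have pairwise disjoint open images. The main obstacle I anticipate is in part (a), specifically in checking carefully that the \emph{new} Thom encodings added to $\mathcal{F}$ (coming from endpoints of curves and from projections of points in $\mathcal{U}$) do not violate Proposition~\ref{prop:alg:filtration}'s conclusion; this is where the refinement-preserves-isomorphism argument is essential and where the bookkeeping between the two-sided (half-space) and one-point (fiber) versions of the isomorphism statement needs to be carried out explicitly via the Five-lemma as in the proof of Proposition~\ref{prop:alg:filtration}.
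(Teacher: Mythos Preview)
Your proposal is correct and follows essentially the same approach as the paper: part~(a) via the output of Algorithm~\ref{alg:filtration} together with Proposition~\ref{prop:alg:filtration}, parts~(b) and~(c) via the correctness of the roadmap algorithm invoked in Line~\ref{line:alg:modified-curve-segment:roadmap}, and part~(d) via the refinement in Line~\ref{line:alg:surjection:intersection}. Your treatment of~(a) is in fact more careful than the paper's, which does not explicitly address why the Morse-partition property survives the later refinement of $\mathcal{F}$; the refinement-preserves-isomorphism argument you sketch is the right way to close that gap.
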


\begin{proof}
Part~\eqref{itemlabel:prop:modified-curve-segment:correctness:1} follows from 
the property of the output of Algorithm~\ref{alg:filtration} (Morse partition) called in Line~\ref{line:alg:surjection:filtration} (cf. Proposition~\ref{prop:alg:filtration}).

Part~\eqref{itemlabel:prop:modified-curve-segment:correctness:2} and
follows from 
\eqref{itemlabel:prop:modified-curve-segment:correctness:3}
fact 
that the output of Algorithm 16.26 (General Roadmap) in \cite{BPRbook2}
which is called in Line \ref{line:alg:modified-curve-segment:roadmap}
describes a road map of the semi-algebraic set $\RR(\Phi,\R^k)$.

Finally, 
Part~\eqref{itemlabel:prop:modified-curve-segment:correctness:4} is ensured
in Line~\ref{line:alg:surjection:intersection}.
\end{proof}

\begin{proof}[Complexity analysis of Algorithm~\ref{alg:modified-curve-segment}]
The stated complexity follows from the complexity bounds of the various algorithms used
in the algorithm, keeping in mind that each arithmetic operation in the ring
$\D[\ass(\mathcal{T})]$ costs $D^{O(i)}$ arithmetic operations in the ring $\D$.
\end{proof}

\begin{remark}
\label{rem:complexity:alg:modified-curve-segment}
Note that in the step described in Line~\ref{line:alg:modified-curve-segment:roadmap} 
it is sufficient to compute curve segment representations $\gamma$ over $\mathcal{T}$,  such that for each
$\y = (\ass(\mathcal{T},t) \in \R^{i+1}$, and each semi-algebraically connected
component $C$ of $S_\y$, there is a curve segment representation $\gamma$, such that
$\ass(\gamma)_\y \cap C \neq \emptyset$. Calling Algorithm 16.12 (Bounded Roadmap) in \cite{BPRbook2} for this purpose 
is convenient but an overkill. 
The recursive calls in constructing a full roadmap leads to the quadratic dependence 
in the exponent - however, we do not need the parts of the roadmap in the various
slices, but retain only the curve segments parametrized by $X_{i+1}$.

One could alternatively achieve the same goal by other algorithms having smaller
complexity (namely, $D^{O(i)} (sd)^{O(k-i)}$ instead of $D^{O(i)} (sd)^{O((k-i)^2)}$). 
For example, it is possible
to modify Algorithm 14.1 (Block Elimination) in \cite{BPRbook2} using as parameter
the coordinate $X_{i+1}$ for this purpose. Doing so
would reduce the total complexity of the algorithm  to $D^{O(i)} (sd)^{O(k-i)}$
instead of $D^{O(i)} (sd)^{O((k-i)^2)}$ as stated in the complexity bound. However, 
describing the modifications needed to Algorithm 14.1 (Block Elimination) in \cite{BPRbook2} would complicate the exposition and moreover will not change
the asymptotic complexity of Algorithm~\ref{alg:surjection} (Computing one-dimensional subset), which is the
only place where Algorithm~\ref{alg:modified-curve-segment} is used. So we chose 
not to make this modification.
\end{remark}

We are now in a position to describe our algorithm which will complete 
Step~\ref{itemlabel:prob-H1:1} of our proof of Theorem~\ref{thm:main}.

\begin{algorithm}[H]
\caption{(Computing one-dimensional subset)}
\label{alg:surjection}
\begin{algorithmic}[1]
\INPUT
\Statex{
\begin{enumerate}[1.]
\item
A triangular Thom encoding $\mathcal{T} = (\mathbf{F},\boldsymbol{\sigma})$ of size $i$;
\item
a finite set $\mathcal{P} \subset \D[X_1,\ldots,X_k]$;
\item
a $\mathcal{P}$-closed formula $\Phi$ such that $\RR(\Phi,\R^k)$ is bounded;
\item
a finite set $\mathcal{M}$ of real univariate representations over $\mathcal{T}$, whose set of 
associated points, $M$,  is contained in $\RR(\Phi,\R^k)$. 
\end{enumerate}
}

\OUTPUT
 \Statex{
 \begin{enumerate}[1.]
    \item a finite set $\mathcal{U}$ of real univariate representations over $\mathcal{T}$;
    \item a finite indexing set $I$ and
     a finite tuple $(\gamma_j)_{j \in I}$ where each $\gamma_j$  a curve segment representation over $\mathcal{T}$;
    \item mappings $L,R:I \rightarrow \mathcal{U}$, defined by $\ass(L(j))$ is the left endpoint of $\gamma_j$, and $\ass(R(j))$ is the right endpoint of $\gamma_j$.
 \end{enumerate}
}

\PROCEDURE
\If {$k-i=1$}
    \For {each $u = \{(f, g_0,g_k), \sigma\}  \in \mathcal{M}$} 
     \label{itemlabel:alg:surjection:line:includepoints1}
        \State{Let $R_u \in \R[X_k]$ be the Sylvester resultant (see for example \cite[pp. 106]{BPRbook2}) with respect to the variable $T$ of the polynomials
        $f, X_k g_0 - g_k$.}
        \State{Use Algorithm 10.11 (Sign Determination) \cite[pp. 390]{BPRbook2} 
        to compute a Thom encoding $\tau_u = (R_u,\sigma_u)$ over $\mathcal{T}$, such that $\ass(\tau_u) = \pi_k(\ass(u))$.}
        \label{line:alg:surjection:1}
        \State{$\mathcal{P} \gets \mathcal{P} \cup \{R_u\}$.}
    \EndFor
    
    \State{
    Use Algorithm 12.21 (Triangular Comparison of Roots) in \cite[pp. 496]{BPRbook2}
    repeatedly with inputs $\mathcal{T}$ and pairs of polynomials in $\mathcal{P}$,
    and order the real roots of the polynomials $P(\ass(\mathcal{T}), X_k), P \in \mathcal{P}$, and hence  obtain a partition of 
    $\R$ into points and open intervals, and identify those points and open intervals which are contained in $\RR(\Phi,\R^k)_{\ass(\mathcal{T})}$.}
    
    \State{$\mathcal{U} \gets \emptyset$.}
    \State{$I \gets \emptyset$.}
    \State{$j \gets 0$.}

    \For {each 
    Thom encoding $(P,\sigma)$ over $\mathcal{T}$ obtained in Line~\ref{line:alg:surjection:1}  whose associated point is in
    $S = \RR(\Phi,\R^k)$} 
    \State{$\mathcal{U} \gets \mathcal{U} \cup \{ ((P, X_k, 1), \sigma)\}$.}
    \EndFor

\algstore{myalg}
\end{algorithmic}
\end{algorithm}
 
\begin{algorithm}[H]
\begin{algorithmic}[1]
\algrestore{myalg}

    \For{each open interval with end-points described by the Thom encodings 
    $\tau_1 = ((P_1, X_k, 1), \sigma_1), \tau_2 = ((P_2, X_k, 1), \sigma_2) \in \mathcal{U}$
    with $\ass(\tau_1) < \ass(\tau_2)$ such that 
    $(\ass(\tau_1),\ass(\tau_2)) \subset \pi_k(\RR(\Phi,\R^k))$
    }

    \State{$I \gets I \cup \{j\}$.}
    \State{$j \gets j+1$.}

    \State{$\gamma_j \gets \gamma$, where $\gamma$ is the curve segment representation over $\mathcal{T}$ defined by:
    \begin{eqnarray*}
    \tau_1(\gamma) &=& \tau_1, \\ 
    \tau_2(\gamma) &=& \tau_2, \\ 
    u(\gamma) &=& ((T,1),(0,1)).
    \end{eqnarray*}
    }

    \State{$L(j) \gets  \tau_1(\gamma_j)$.} 
    \State{$R(j) \gets  \tau_2(\gamma_j)$.} 
    \EndFor

    \State{Output $\mathcal{U}, (\gamma_j)_{j \in I}$,  and the mappings $L,R:I\rightarrow \mathcal{U}$.}
\EndIf

\State{
\label{itemlabel:alg:surjection:line:curve-segment}
Use Algorithm~\ref{alg:modified-curve-segment} (Curve segments)
with input $(\mathcal{T}, \mathcal{P},\Phi)$ to compute: 

\begin{enumerate}[(a)]
    \item A finite tuple  $\mathcal{F} =(\tau_1,\ldots,\tau_N)$ of Thom encodings over $\mathcal{T}$, with
    $\ass(\tau_1) < \cdots < \ass(\tau_N)$;
    \item for each $j, 1 \leq j \leq N-1$,
    a finite tuple $\mathcal{C}_j$ of curve segment representations over $\mathcal{T}$ 
    such that for each $\gamma \in \mathcal{C}_j$,  $\tau_1(\gamma) = \tau_j, \tau_2(\gamma) = \tau_{j+1})$;
    \item for each $j, 1 \leq j \leq N$
a finite set $\mathcal{U}_j$ of real univariate representations over $\mathcal{T}$, such that for each
$u \in \mathcal{U}_j$, 
 the set of points $\{\ass(u) \;\vert\; u \in \mathcal{U}_j \}$ is precisely the set of end-points of the curve segments in $\mathcal{C}_{j-1} \cup \mathcal{C}_{j}$ (with the convention that $\mathcal{C}_{0} = \emptyset$);
    \item mappings $L_j,R_{j-1}: \mathcal{C}_j \rightarrow \mathcal{U}_j$, such that
    $\ass(L_j(\gamma))$ is the left end-point of $\gamma$, and $\ass(R_j(\gamma))$ is the right end-point of $\gamma$.
\end{enumerate}
}

\For {$\tau = (f,\sigma)  \in \mathcal{F}$}
         \State{$\mathcal{M}_\tau \gets \{u \in \mathcal{M} \cup \mathcal{U} \;\vert\; 
         \pi_{i+1}(\ass(u)) = \ass(\tau) \}$.} 
          \label{itemlabel:alg:surjection:line:includepoints2}
         \State{$\mathcal{T}_\tau \gets ((F,f), (\boldsymbol{\sigma},\sigma)))$.}
         \State{Call Algorithm~\ref{alg:surjection} (Computing one-dimensional subset) recursively with input  ($ \mathcal{T}_\tau,\mathcal{P}, \Phi, \mathcal{M}_\tau$) and obtain 
         a set of $\mathcal{U}_\tau$ of real univariate representations over $\mathcal{T}_\tau$, an indexing set $I_\tau$, a tuple $(\gamma_i)_{i \in I_\tau}$ of curve segment representations, and mappings
         $L_\tau,R_\tau: I_\tau \rightarrow \mathcal{U}_\tau$.
         (Note that for each $i \in I_\tau, \ass(\gamma_i) \subset S_{\ass(\tau)}$.)}
         \label{itemlabel:alg:surjection:line:recurse}
\EndFor
\State{$I \gets \bigcup_{\tau \in \mathcal{F}} I_\tau$.}
\State{$\mathcal{U}  \gets \bigcup_{\tau \in \mathcal{F}} \mathcal{U}_\tau$.}
\State{$L \gets \bigcup_{1 \leq j \leq N } L_j \cup \bigcup_{\tau \in \mathcal{F}} L_\tau$.
(Union of disjoint mappings means the disjoint union of their graphs.)}
\State{$R \gets \bigcup_{1 \leq j \leq N } R_j \cup \bigcup_{\tau \in \mathcal{F}} R_\tau$.}
\State{Output $\mathcal{U},I, (\gamma_j)_{j \in I}, L,R$.}
 \COMPLEXITY
Suppose that $\deg(\mathcal{T}) = d^{O(k)}$.
The complexity of the algorithm is bounded by 
$(s d)^{O(k^2)}$, where $s = \card(\mathcal{P}), d = \max_{P \in \mathcal{P}}$.
Moreover,
$\card(I) = ( s d)^{O((k-i)^2)}$, and 
the degrees of the elements of $\mathcal{U}$ and $\gamma_j, j \in I$
are bounded by $(d^{O(k)}, d^{O(k)})$.
\end{algorithmic}
\end{algorithm}
 
 \begin{proposition}
 \label{prop:alg:surjection:correctness}
 The output of Algorithm~\ref{alg:surjection} (Computing one-dimensional subset) has the following properties.
 Let $\Gamma = \bigcup_{j \in I} \clos(\ass(\gamma_j))$, and
 $ S = \RR(\Phi,\R^k)_{\ass(\mathcal{T})}$.
\begin{enumerate}[(a)]
    \item 
    \label{itemlabel:prop:alg:surjection:correctness:1}
        $M \subset \Gamma$;
    \item 
    \label{itemlabel:prop:alg:surjection:correctness:2}
        $\Gamma \subset S$;
    \item 
    \label{itemlabel:prop:alg:surjection:correctness:3}
        $\dim(\Gamma) \leq 1$;
    \item 
    \label{itemlabel:prop:alg:surjection:correctness:4}
        the homomorphism  $i_{*,0}:\HH_0(\Gamma) \rightarrow \HH_0(S)$ induced by the inclusion map $i:\Gamma \hookrightarrow S$ is an isomorphism;
    \item 
    \label{itemlabel:prop:alg:surjection:correctness:5}
        the homomorphism $i_{*,1}:\HH_1(\Gamma) \rightarrow \HH_1(S)$ 
        induced by the inclusion map $i:\Gamma \hookrightarrow S$ is an epimorphism.
\end{enumerate}
\end{proposition}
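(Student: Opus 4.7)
The plan is to prove the five claims simultaneously by induction on the codimension $k - i$ (the dimension of the fibre over $\ass(\mathcal{T})$), mirroring the recursive structure of Algorithm~\ref{alg:surjection}. The base case $k - i = 1$ is essentially a tautology: $S$ is a finite disjoint union of points and open intervals on the line $\{\ass(\mathcal{T})\} \times \R$, and the algorithm explicitly outputs the real univariate representations for all points of $S$ (including those associated to elements of $\mathcal{M}$, via the Sylvester resultant step) and the curve segment representations for all maximal open intervals of $S$. Thus $\Gamma = S$, from which (a)--(e) are immediate.

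For the inductive step, let $\tau_1, \ldots, \tau_N \in \mathcal{F}$ be the Thom encodings output by the call to Algorithm~\ref{alg:modified-curve-segment}, with $t_j = \ass(\tau_j)$, and set
\[
 S_j = S \cap \pi_{i+1}^{-1}(t_j), \quad S^{(j)} = S \cap \pi_{i+1}^{-1}([t_j, t_{j+1}]), \quad \Gamma_j = \Gamma \cap S_j, \quad \Gamma^{(j)} = \Gamma \cap S^{(j)}.
\]
Properties (a), (b), (c) follow directly: (b) and (c) because the top-level curves satisfy $\ass(\gamma_h) \subset S$ and have dimension one (Proposition~\ref{prop:modified-curve-segment:correctness}(b)), together with the inductive hypothesis applied to each $\Gamma_j$; and (a) because each point $u \in \mathcal{M}$ is (by construction of $\mathcal{M}_\tau$ in Line~\ref{itemlabel:alg:surjection:line:includepoints2}) passed into the recursive call at the appropriate fibre, where the inductive hypothesis guarantees $\ass(u) \in \Gamma_j \subset \Gamma$. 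The substantive content is (d) and (e).

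To prove (d) and (e), I would proceed by a secondary induction on $m$, showing that the inclusion $\Gamma^{[\le m]} \hookrightarrow S^{[\le m]}$ induces an isomorphism on $\HH_0$ and an epimorphism on $\HH_1$, where $S^{[\le m]} = S \cap \pi_{i+1}^{-1}((-\infty, t_m])$ and $\Gamma^{[\le m]}$ analogously. The case $m = 1$ reduces to $\Gamma_1 \hookrightarrow S_1$, handled by the outer inductive hypothesis. For the step from $m$ to $m+1$, decompose
\[
S^{[\le m+1]} = S^{[\le m]} \cup S^{(m)}, \qquad S^{[\le m]} \cap S^{(m)} = S_m,
\]
and similarly for $\Gamma$. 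The Mayer--Vietoris sequences for the two decompositions sit in a commutative ladder, and the Five-lemma propagates the desired properties once they are known for the pieces. The outer inductive hypothesis supplies the result for $S_m$, and the inner one for $S^{[\le m]}$; hence the remaining obstacle is to analyse the slab pair $(S^{(m)}, \Gamma^{(m)})$.

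The main technical obstacle, therefore, is the slab statement: $\HH_q(S^{(m)}, \Gamma^{(m)}) = 0$ for $q = 0, 1$. The strategy here is to use Proposition~\ref{prop:modified-curve-segment:correctness}(a), which says that the inclusions $S_{t} \hookrightarrow S^{(m)}$ are homological equivalences for every $t \in [t_m, t_{m+1}]$, so that $S^{(m)}$ has the homology type of its fibre $S_m$. Combined with Proposition~\ref{prop:modified-curve-segment:correctness}(c), which ensures that each semi-algebraically connected component of each fibre meets some curve $\ass(\gamma_h)$, and with the fact (ensured by Line~\ref{itemlabel:alg:surjection:line:includepoints2}, where the endpoint representations $\mathcal{U}$ are inserted into the recursive input $\mathcal{M}_\tau$) that the left and right endpoints of every $\gamma_h$ lie in $\Gamma_m$ and $\Gamma_{m+1}$, we obtain that $\Gamma^{(m)}$ is the mapping cylinder-like union of $\Gamma_m$, $\Gamma_{m+1}$ and the connecting arcs. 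Surjectivity on $\HH_0$ of $\Gamma^{(m)} \hookrightarrow S^{(m)}$ then follows from the outer inductive hypothesis applied to $\Gamma_m \hookrightarrow S_m$ combined with the homological equivalence; injectivity on $\HH_0$ follows because any two points of $\Gamma^{(m)}$ in the same connected component of $S^{(m)}$ project to points in the same connected component of $S_m$, which are already connected inside $\Gamma_m$ by inductive hypothesis, and the curves furnish the needed paths in $\Gamma^{(m)}$. Surjectivity on $\HH_1$ is obtained by deforming a given 1-cycle in $S^{(m)}$ into the fibre $S_m$ via the homological equivalence, then applying the outer inductive hypothesis. The hardest part will be keeping track of this last deformation carefully enough to check that the resulting cycle is indeed in the image of $\HH_1(\Gamma^{(m)})$ rather than just $\HH_1(\Gamma_m)$; here the inclusion $\Gamma_m \hookrightarrow \Gamma^{(m)}$ does the work.
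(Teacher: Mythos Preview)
Your proposal is correct and follows essentially the same strategy as the paper: induction on $k-i$, a secondary sweep along the $X_{i+1}$-axis, and Mayer--Vietoris combined with the Five-lemma. The one genuine difference is the choice of cut points. You decompose along the distinguished values $t_j$, so your slabs are $S^{(m)} = S_{[t_m,t_{m+1}]}$ and the overlaps are the full fibres $S_m$. The paper instead introduces midpoints $a_j = \tfrac{1}{2}(t_j+t_{j+1})$ and decomposes along those, so its slabs are $S_{[a_{j-1},a_j]}$ (containing exactly one distinguished fibre $S_{t_j}$) and its overlaps $S_{a_{j-1}}$ are generic fibres.

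This choice buys the paper a cleaner slab analysis: because the curve segments over $(t_{j-1},t_j)$ and $(t_j,t_{j+1})$ are disjoint, $\Gamma_{[a_{j-1},a_j]}$ deformation retracts along $X_{i+1}$ onto $\Gamma_{t_j}$, so the slab map on $\HH_1$ reduces immediately to the outer inductive hypothesis at $t_j$. The overlap $\Gamma_{a_{j-1}}$ is just a finite set of curve-points, and surjectivity of $d$ on $\HH_0$ is exactly Proposition~\ref{prop:modified-curve-segment:correctness}(c). By contrast, your slab $\Gamma^{(m)}$ contains \emph{both} recursive roadmaps $\Gamma_m,\Gamma_{m+1}$ joined by arcs and does not retract to a single fibre, so you have to argue separately that $\HH_0(\Gamma^{(m)})\to\HH_0(S^{(m)})$ is injective. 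Your sketch of this (``project to $S_m$, then use the curves'') is not quite literal---points of $\Gamma_{m+1}$ do not project to $S_m$, and a component of $S_{m+1}$ need not contain a right endpoint of any curve over $(t_m,t_{m+1})$---so here you would ultimately fall back on the roadmap connectivity argument (Proposition~15.7 of \cite{BPRbook2}), just as the paper does for its Claim~2. Once you accept that invocation, your decomposition works equally well; the paper's midpoint trick merely localises the appeal to the roadmap property and keeps the slab analysis to a one-line retraction.
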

 
 \begin{proof}
 The property in Part \eqref{itemlabel:prop:alg:surjection:correctness:1} is ensured in
 Lines~\ref{itemlabel:alg:surjection:line:includepoints1}
 and \ref{itemlabel:alg:surjection:line:includepoints2}.
 Part
 \eqref{itemlabel:prop:alg:surjection:correctness:2}
 follows from 
 the property of the output of Algorithm~\ref{alg:modified-curve-segment} (Curve segments) (called in 
Line~\ref{itemlabel:alg:surjection:line:curve-segment})
given in 
Part \eqref{itemlabel:prop:modified-curve-segment:correctness:2} of
Proposition~\ref{prop:modified-curve-segment:correctness}.

Part \eqref{itemlabel:prop:alg:surjection:correctness:3} 
is clear since $\Gamma$ is by definition the finite union
\[
\bigcup_{j \in I} \clos(\ass(\gamma_j))
\]
and $\dim \ass(\gamma_j) = 1$ for each $j \in I$, and
taking the closure does not increase the dimension of a semi-algebraic set.
 
 It is a standard exercise (see for example proof of Proposition 15.7 in \cite{BPRbook2}) to prove  that the semi-algebraic set $\Gamma$  
 satisfies the properties
 of being roadmap for $S = \RR(\Phi,\R^k)$ (see \cite[Chapter 15]{BPRbook2}), which
 implies Part~\eqref{itemlabel:prop:alg:surjection:correctness:4}.

 We now prove Part~\eqref{itemlabel:prop:alg:surjection:correctness:5}.
 
 The proof is by induction on $k - i$.
 
 Base case: $k-i = 1$. In this case the claim is clear since $\HH_1(\Gamma) = \HH_1(S) = 0$.
 
 Suppose the claim is true for all smaller values of $k-i$. Notice that Algorithm~\ref{alg:surjection} (Computing one-dimensional subset) is called recursively in Line~\ref{itemlabel:alg:surjection:line:recurse}. In these calls
the triangular Thom encoding $\mathcal{T}_\tau, \tau \in \mathcal{F}$ in the input
 is of size $i+1$, while the number of variables is still $k$.
 We have also have,
 \[
 \Gamma_\tau := \clos\left(\bigcup_{j \in I_\tau} \ass(\gamma_j)\right) =  \Gamma_{\ass(\tau)}.
 \]
 Thus, using the induction hypothesis for this recursive call (since $k- (i+1) < k- i$),
 we obtain that the restriction of the inclusion $\Gamma \rightarrow S$ to
 $\Gamma_\tau$ induces a surjection
 \begin{equation}
 \label{eqn:prop:alg:surjection:correctness:proof:0}
 \HH_1(\Gamma_\tau) \rightarrow \HH_1(S_{\ass(\tau)}).
 \end{equation}
 
 Denote for $1 \leq j \leq N$, 
 $t_j = \pi_{i+1}(\ass(\tau_j))$.
 
 \begin{eqnarray*}
 a_0 &=& t_1, \\
 a_j &=& \frac{t_j + t_{j+1}}{2}, 1 \leq j < N,\\
 a_N &=& t_N.
 \end{eqnarray*}
We prove the following claims.

\begin{claim}
For each $j, 0 \leq j \leq N$, the inclusion maps 
induce the following isomorphisms.
\begin{align}
\label{eqn:prop:alg:surjection:correctness:proof:1}
 \HH_*(S_{(-\infty,t_j]}) \rightarrow \HH_*(S_{(-\infty,a_j]}), \\
 \label{eqn:prop:alg:surjection:correctness:proof:2}
 \HH_*(S_{t_j}) \rightarrow \HH_*(S_{[a_{j-1},a_j]}), \\
 \label{eqn:prop:alg:surjection:correctness:proof:3}
 \HH_*(\Gamma_{(-\infty,t_j]}) \rightarrow \HH_*(\Gamma_{(-\infty,a_j]}), \\
 \label{eqn:prop:alg:surjection:correctness:proof:4}
 \HH_*(\Gamma_{t_j}) \rightarrow \HH_*(\Gamma_{[a_{j-1},a_j]}).
\end{align}
\end{claim}

\begin{proof}
Parts \eqref{eqn:prop:alg:surjection:correctness:proof:1} and
\eqref{eqn:prop:alg:surjection:correctness:proof:2} 
are consequences of the 
property of the output of Algorithm~\ref{alg:modified-curve-segment} (Curve segments)
(which is called in 
Line~\ref{itemlabel:alg:surjection:line:curve-segment})
given in 
Part \eqref{itemlabel:prop:modified-curve-segment:correctness:1} of
Proposition~\ref{prop:modified-curve-segment:correctness}.

Parts \eqref{eqn:prop:alg:surjection:correctness:proof:3} and
\eqref{eqn:prop:alg:surjection:correctness:proof:4} follow from the fact that
there is an easy to define retraction (along the $X_{i+1}$-coordinate) of 
$\Gamma_{(-\infty,a_j]}$ to $\Gamma_{(-\infty,t_j]}$ (resp. 
$\Gamma_{[a_{j-1},a_j]}$ to $\Gamma_{t_j}$) making use of the fact that
distinct curve segments over the open intervals $(t_{j-1},t_j)$ do not intersect
which is ensured by
Part \eqref{itemlabel:prop:modified-curve-segment:correctness:4} of
Proposition~\ref{prop:modified-curve-segment:correctness}.
\end{proof}

\begin{claim}
\label{prop:alg:surjection:correctness:proof:claim:2}
Let $a,b \in \{a_0,\ldots, a_N\}$, with $a \leq b$.
The inclusion map $\Gamma \hookrightarrow S$ induces 
isomorphisms 
\[
\HH_0(\Gamma_{(-\infty,a]}) \rightarrow \HH_0(S_{(\infty,a]}),
\]
\[
\HH_0(\Gamma_{[a,b]}) \rightarrow \HH_0(S_{[a,b]}),
\]
\end{claim}
\begin{proof}
This follows from the fact that $\Gamma_{(-\infty,a]}$ (resp. $\Gamma_{[a,b]}$)
satisfy the roadmap property with respect to the set $S_{(\infty,a]}$ 
(resp. $S_{[a,b]}$). The proof of this fact is standard and omitted.
\end{proof}

Using the claims proved above, we are now going to prove using induction on $j$, 
that 
the inclusion map $\Gamma \rightarrow S$ induces an isomorphism,
\[
\HH_1(\Gamma_{(\infty,a_j]}) \rightarrow \HH_1(S_{(\infty,a_j]}).
\]

The claim is true for $j=0$ by the global induction hypothesis on $i$, and 
hence is also true for $j=1$ using using \eqref{eqn:prop:alg:surjection:correctness:proof:1} and \eqref{eqn:prop:alg:surjection:correctness:proof:3}.

We prove it for $j>1$ by induction. Suppose the claim holds until $j-1$. 

Hence we have isomorphism
\[
\HH_1(\Gamma_{(\infty,a_{j-1}]}) \rightarrow \HH_1(S_{(\infty,a_{j-1}]})
\]
induced by inclusion.

Observe that for any set $X \subset \R^k$,
\begin{eqnarray*}
X_{(-\infty,a_j]} &=& X_{(-\infty,a_{j-1}]} \cup X_{[a_{j-1},a_j]}, \\
X_{a_{j-1}}&=& X_{(-\infty,a_{j-1}]} \cap X_{[a_{j-1},a_j]}  X_{a_{j-1}}.
\end{eqnarray*}

Let
\begin{eqnarray*}
A_1 &=& \Gamma_{(-\infty,a_{j-1}]},\\
A_2 &=& \Gamma_{[a_{j-1},a_j]}, \\
B_1 &=& S_{(-\infty,a_{j-1}]},\\
B_2 &=& S_{[a_{j-1},a_j]}.\\
\end{eqnarray*}
Also, 
let $A_{12}$ (resp. $B_{12}$) denote $A_1\cap A_2 $ (resp. $B_1 \cap B_2$), and
$A^{12}$ (resp. $B^{12}$) denote $A_1\cup A_2 $ (resp. $B_1 \cup B_2$).

The Mayer-Vietoris exact sequence (see for example \cite[Theorem 6.35]{BPRbook2}) yields the following commutative diagrams with exact rows and vertical arrows induced by various restrictions of the  inclusion $\Gamma \hookrightarrow S$.

\[
\xymatrix{
\HH_1(A_{12}) \ar[d]^a\ar[r]& \HH_1(A_1)\oplus\HH_1(A_2)\ar[d]^{b_1\oplus b_2}\ar[r]& \HH_1(A^{12})\ar[d]^c\ar[r]
& \HH_0(A_{12})\ar[d]^d\ar[r] &\HH_0(A_1) \oplus \HH_0(A_2)\ar[d]^{e_1\oplus e_2} \\
\HH_1(B_{12})\ar[r]& \HH_1(B_1)\oplus\HH_1(B_2)\ar[r]& \HH_1(B^{12})\ar[r]
& \HH_0(B_{12})\ar[r] &\HH_0(B_1) \oplus \HH_0(B_2) 
}
\]

By induction hypothesis on $j$, 
the map $b_1:\HH_1(A_1) \rightarrow \HH_1(B_1)$ is surjective.
Using 
\eqref{eqn:prop:alg:surjection:correctness:proof:0},
\eqref{eqn:prop:alg:surjection:correctness:proof:2},
 and \eqref{eqn:prop:alg:surjection:correctness:proof:4} 
we have that the map $b_2:\HH_1(A_2) \rightarrow \HH_1(B_2)$ is surjective.
Hence the map $b = b_1\oplus b_2$ is surjective.

Using 
Proposition~\ref{prop:modified-curve-segment:correctness} ~\eqref{itemlabel:prop:modified-curve-segment:correctness:3}
we have that map $d$ is surjective.

Finally, using Claim~\ref{prop:alg:surjection:correctness:proof:claim:2}, we have that
the maps $e_1$ and $e_2$ are both isomorphisms, and hence so is $e$. In particular,
$e$ is injective.

It follows from the above and 
(one-half of)  the ``Five-lemma" (see for example \cite{Bourbaki}),
that $c$ is a surjection.
 \end{proof}

\begin{proof}[Complexity analysis of Algorithm~\ref{alg:surjection}]
It follows from the complexity bounds on the algorithms used that all the steps
before the recursive call in Line~\ref{itemlabel:alg:surjection:line:recurse} has complexity bounded by
$(s d)^{O((k-i)^2)}$ in terms of the number of arithmetic operations in $\D[\ass(\mathcal{T}]$. Hence,the number of arithmetic operations in $\D$ for these
steps is bounded by $ d^{O(k i)} (sd)^{O((k-i)^2)}$.

There are $(sd)^{O(k-i)}$ recursive calls. For each of the recursive calls,
the new triangular Thom encoding is of size $i+1$, and its degree is bounded again
by $d^{O(k)}$. An easy inductive argument now implies the stated complexity bounds.
\end{proof} 

\subsection{Implementing Step~\ref{itemlabel:prob-H1:2}: proofs of Theorems~\ref{thm:main} and \ref{thm:basic}}
\label{subsec:basis}

We will now 
prove Theorem~\ref{thm:main} by describing 
an algorithm
(cf. Algorithm~\ref{alg:basis} below) for computing 
a semi-algebraic basis of $\HH_1(\RR(\Phi,\R^k))$, for any given closed
formula $\Phi$. Theorem~\ref{thm:main} will then follow 
from the proof of  correctness this algorithm  and the analysis of the complexity
of the algorithm.

\subsubsection{Outline of Algorithm~\ref{alg:basis}}
Our main tool will be Algorithm~\ref{alg:surjection} (Computing one-dimensional subset) that produces 
a one-dimensional subset $\Gamma$ of $S = \RR(\Phi,\R^k)$, such that the image 
of $\HH_1(\Gamma)$ in $\HH_1(S)$ under the linear map induced by inclusion is 
surjective. The semi-algebraic set $\Gamma$ has an underlying structure of 
a finite graph $G$, and $\Gamma$ is semi-algebraically  homeomorphic to the geometric realization $|G|$
of $G$. Using a combinatorial graph-theoretic algorithm it is easy to 
compute a basis of the cycle space, $Z(G) \cong \HH_1(\Gamma)$, consisting of 
simple cycles (say) $C_1,\ldots,C_N$. Each such simple cycle $C_i$ is a subgraph of $G$,
and its geometric realization $|C_i| \subset |G| \cong \Gamma$ is semi-algebraically homeomorphic to $\Sphere^1$. 

We will denote by $[C_i]$ a non-zero element of the image of $\HH_1(|C_i|)$ in $\HH_1(S)$
if this image is non-zero (and let $[C_i] = 0$ otherwise).
However, the set $\{[C_1],\ldots,[C_N]\}$ need not be linearly independent but thanks to 
the surjectivity of the map $\HH_1(\Gamma) \rightarrow \HH_1(S)$, they span $\HH_1(S)$.
We identify a minimal subset of $\{[C_1],\ldots,[C_N]\}$ which spans $\HH_1(S)$ (i.e. form a basis of $\HH_1(S)$). The corresponding subsets of 
$\Gamma$ then constitutes a semi-algebraic basis of $\HH_1(S)$.
In order to achieve this last step effectively and with singly exponential complexity, we
use a recent result proved in \cite{Basu-Karisani}, giving an algorithm
(see Algorithm~\ref{alg:simplicial-replacement} below for input, output and complexity)
with singly exponential complexity for replacing a tuple of given closed and bounded semi-algebraic subsets of $\R^k$, by a simplicial complex which is homologically $1$-equivalent to the
given tuple of sets (cf. Definition~\ref{def:equivalence-diagrams}). This simplicial complex is of singly exponential bounded size, and the various $|C_i|$'s can be identified with subcomplexes of this complex. So to find a basis from amongst the  elements
$[C_1],\ldots,[C_N]$ becomes a problem of ordinary linear algebra which can be solved 
with polynomial complexity.

\subsubsection{Conversion of curve segment representations to closed formulas}
We will need the following algorithm.
It is needed in order to address the following technical issue.

The output of Algorithm~\ref{alg:surjection} (Computing one-dimensional subset)
contains amongst other objects, a set of curve segment representations. 
We will need to convert these descriptions into \emph{closed formulas} describing the 
closure of the associated curves in order to use 
Algorithm~\ref{alg:simplicial-replacement}
which only accepts such descriptions in the input.

Note that the algorithmic problem of computing a \emph{closed}
formula describing the closure of a given  
semi-algebraic set described by a quantifier-free (but not necessarily closed-) formula
is far from being easy, and no algorithm with singly exponential complexity is known for solving this problem in general.
\footnote{Note that without the requirement that the output formula be closed it is straight-forward to obtain a 
singly exponential complexity algorithm via quantifier elimination.}
(A doubly exponential algorithm is known, using the notion
of a stratifying family \cite[Chapter 5]{BPRbook2}).
However, fortunately for us the curve segment representations describing the associated curve have a special structure. In particular, it is clear that given a curve segment representation $\gamma$,  it is algorithmically quite simple to obtain a description of the the image,
$\pi_{\{1,j\}}(\ass(\gamma))$,
of the projection of $\ass(\gamma)$, to each of the coordinate subspaces spanned by $(X_1,X_j), 2 \leq j \leq k$. 
More precisely,
suppose that $\gamma$ is the curve segment representation with
\[
u(\gamma) = ((f, g_0, \ldots, g_k), \sigma),
\]
where 
$f,g_i \in \D[X_1,T]$, and $\sigma \in \{0,1,-1\}^{\Der_T(f)}$. Then, $\ass(\gamma)$
is defined by
\[
\ass(\gamma) = \left\{ \left(x_1, \frac{g_2(x_1,t(x_1))}{g_0(x_1,t(x_1))},\ldots, \frac{g_k(x_1,t(x_1))}{g_0(x_1,t(x_1))}\right) \;\middle\vert\;
\ass(\tau_1(\gamma)) < x_1 < \ass(\tau_2(\gamma)) \right\},
\]
where for each $x_1, \ass(\tau_1(\gamma)) < x_1 < \ass(\tau_2(\gamma))$, and $t(x_1)$ is
a root of $f(x_1,T)$ with Thom encoding $\sigma$.

Now for each $j, 2 \leq j \leq k$, the projection of $\ass(\gamma)$ to the $(X_1,X_j)$-plane
is  described by
\[
\pi_{\{1,j\}}(\ass(\gamma)) = \left\{ \left(x_1, \frac{g_j(x_1,t(x_1))}{g_0(x_1,t(x_1))}\right)  \;\middle\vert\;
\ass(\tau_1(\gamma)) < x_1 < \ass(\tau_2(\gamma))\right\}.
\]

Using an effective quantifier elimination (eliminating $T$), one can obtain from the above description a quantifier-free formula with free variables $X_1,X_j$ whose realization is
equal to $\pi_{\{1,j\}}(\ass(\gamma))$.

We will use the following claim.
\begin{claim}
\label{claim:alg:convert-to-closed}
Suppose that $\gamma$ is a curve segment representation and $\ass(\gamma)$ is bounded, then
\[
\clos(\ass(\gamma)) = \bigcap_{2 \leq j \leq k} \pi^{-1}_{\{1,j\}}(\clos(\pi_{\{1,j\}}(\ass(\gamma)))).
\]
\end{claim}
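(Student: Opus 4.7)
The plan is to prove the two inclusions separately, with the $\supseteq$ direction being the substantive one. Throughout, I will exploit the defining property of a curve segment representation: by Definition~\ref{def:curve-segment} (with $i=0$), $\ass(\gamma)$ is the image of a continuous injective semi-algebraic map $h : I \to \R^k$, where $I = (\ass(\tau_1(\gamma)), \ass(\tau_2(\gamma)))$ and $h(x_1) = (x_1, h_2(x_1), \ldots, h_k(x_1))$ with each $h_j : I \to \R$ a continuous semi-algebraic function. In particular, $\ass(\gamma)$ is the graph of $x_1 \mapsto (h_2(x_1), \ldots, h_k(x_1))$, and $\pi_{\{1,j\}}(\ass(\gamma))$ is the graph of $h_j$ over $I$.

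The inclusion $\subseteq$ is immediate from the continuity of each projection $\pi_{\{1,j\}}$: one has $\pi_{\{1,j\}}(\clos(\ass(\gamma))) \subseteq \clos(\pi_{\{1,j\}}(\ass(\gamma)))$, so $\clos(\ass(\gamma)) \subseteq \pi^{-1}_{\{1,j\}}(\clos(\pi_{\{1,j\}}(\ass(\gamma))))$ for every $j$, and intersecting yields the inclusion.

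For the reverse inclusion, let $\mathbf{x} = (x_1, \ldots, x_k)$ lie in the right-hand side, so $(x_1, x_j) \in \clos(\pi_{\{1,j\}}(\ass(\gamma)))$ for each $j \in \{2, \ldots, k\}$. The closure of $\pi_{\{1,j\}}(\ass(\gamma))$ is contained in $[\ass(\tau_1(\gamma)), \ass(\tau_2(\gamma))] \times \R$, so $x_1$ lies in this closed interval, and I split into two cases. If $x_1 \in I$, then since $h_j$ is continuous on $I$, the graph of $h_j$ is closed in the strip $I \times \R$; the point $(x_1, x_j)$ belongs to the closure of this graph and has first coordinate inside $I$, which forces $(x_1, x_j)$ to lie on the graph itself, i.e. $x_j = h_j(x_1)$. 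Since this holds for all $j$, $\mathbf{x} = h(x_1) \in \ass(\gamma) \subseteq \clos(\ass(\gamma))$. If instead $x_1 \in \{\ass(\tau_1(\gamma)), \ass(\tau_2(\gamma))\}$, I invoke the monotonicity/one-variable structure theorem for semi-algebraic functions (cf.\ \cite[Chapter~2]{BPRbook2}), which guarantees that each $h_j$ has a well-defined one-sided limit at each endpoint of $I$; by the boundedness hypothesis on $\ass(\gamma)$, these limits are finite real numbers. The closure of the graph of $h_j$ intersects the line $\{x_1\} \times \R$ in exactly the single point given by this one-sided limit, so $x_j$ must equal $\lim_{t \to x_1,\, t \in I} h_j(t)$ for every $j$; consequently $\mathbf{x} = \lim_{t \to x_1,\, t \in I} h(t)$, which lies in $\clos(\ass(\gamma))$.

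The main conceptual point, and the only mild obstacle, is Case 2: one must be sure that although the sequences witnessing $(x_1, x_j) \in \clos(\pi_{\{1,j\}}(\ass(\gamma)))$ can a priori be chosen independently for each $j$, the one-sided limit of each $h_j$ at an endpoint of $I$ is \emph{uniquely determined}, so the value $x_j$ is forced to agree with the $j$-th component of the common limit $\lim_{t \to x_1} h(t)$. This is exactly what the semi-algebraic monotonicity theorem supplies. All the remaining steps are routine manipulations with closures of graphs of continuous functions.
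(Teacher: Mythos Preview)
Your proof is correct, and the $\subseteq$ direction matches the paper's. For $\supseteq$, however, you take a genuinely different route. The paper invokes the semi-algebraic curve selection lemma: for each $j$ it produces a curve $\gamma_j:[0,t_{j,0}]\to\R^2$ from $(x_1,x_j)$ into $\pi_{\{1,j\}}(\ass(\gamma))$, argues that the first coordinate $\gamma_{1,j}$ is non-constant and (after shrinking) strictly monotone, inverts it to reparametrize each $\gamma_j$ by $X_1$, and then glues the $k-1$ reparametrized curves into a single curve $\widetilde\gamma$ in $\ass(\gamma)$ approaching $\mathbf{x}$.

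Your argument is more elementary: you exploit directly that $\ass(\gamma)$ is a graph over $I$, split on whether $x_1$ is interior or an endpoint, and in the endpoint case use the existence of one-sided limits for one-variable semi-algebraic functions (the monotonicity theorem) together with boundedness to pin down each $x_j$ uniquely. This avoids curve selection entirely and makes the role of the boundedness hypothesis more transparent. The paper's approach, on the other hand, is closer in spirit to how one would argue for more general semi-algebraic curves not presented as graphs, so it generalizes more readily; but for the statement at hand your case split is cleaner and shorter.
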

\begin{proof}[Proof of Claim~\ref{claim:alg:convert-to-closed}]
First, suppose that $\x \in \clos(\ass(\gamma))$. Then we have that 
\[
\x \in \pi^{-1}_{\{1,j\}}(\pi_{\{1,j\}}(\ass(\gamma)))
\]
for all $j$. 
It is a general fact from topology that for any continuous function $f$ and set $A$, $f(\clos(A)) \subset \clos(f(A))$. Hence, \[
\pi^{-1}_{\{1,j\}}(\pi_{\{1,j\}}(\ass(\gamma))) \subset \pi^{-1}_{\{1,j\}}(\clos(\pi_{\{1,j\}}(\ass(\gamma))))
\]
for all $j$. Therefore $\x \in \bigcap_{2 \leq j \leq k} \pi^{-1}_{\{1,j\}}(\clos(\pi_{\{1,j\}}(\ass(\gamma))))$.

Now, suppose that 
\[
\x  = (x_1,\ldots,x_k) \in \bigcap_{2 \leq j \leq k} \pi^{-1}_{\{1,j\}}(\clos(\pi_{\{1,j\}}(\ass(\gamma))))
\]
and let 
$\x_{1,j} = \pi_{\{1,j\}}(\x)$
for $j = 2,\ldots, k$.

Since, $\x_{1,j} \in \clos(\pi_{\{1,j\}}(\ass(\gamma)))$, 
using the semi-algebraic curve selection lemma (see for example \cite[Theorem 3.19]{BPRbook2}) there exists $t_{j,0} > 0$, 
such that there exists a semi-algebraic curve, 
$\gamma_j = (\gamma_{1,j},\gamma_{2,j}) :[0, t_{j,0}]: \R^2$, such that
$\gamma_{1,j},\gamma_{2,s}$ are continuous semi-algebraic functions,
$\gamma_j(0) = \x_{1,j}$,  and $\gamma((0,t_{j,0}]) \subset \pi_{\{1,j\}}(\ass(\gamma))$.
Moreover, clearly since $\pi_{\{1,j\}}(\ass(\gamma))$ is a curve parametrized by the 
$X_1$ coordinate, $\gamma_{1,j}$ is not a constant function, and without loss of
generality (choosing $t_{j,0}$ smaller if necessary) we can assume that
$\gamma_{1,j}$ is a strictly increasing function.
For $j=2,\ldots,k$,
let $f_j:[x_1, \gamma_{1,j}(t_{j,0})] \rightarrow \R$ be defined by
\[
f_j(X_1) =\gamma_{2,j}(\gamma_{1,j}^{-1}(X_1)). 
\]
Taking $x_1' = \min_{2 \leq j \leq k} \gamma_{1,j}(t_{j,0})$, we obtain a semi-algebraic curve
$\widetilde{\gamma}: [x_1,x_1'] \rightarrow \ass(\gamma)$, defined by
\[
\widetilde{\gamma}(X_1) = (X_1, f_2(X_1),\ldots, f_k(X_1)).
\]
It is easy to check that
\[
\widetilde{\gamma}(x_1) = \x,
\]
and 
\[
\widetilde{\gamma}: (x_1,x_1'] \subset \ass(\gamma),
\]
which proves that $\x \in \clos(\ass(\gamma))$.
\end{proof}

Using Claim~\ref{claim:alg:convert-to-closed}  we reduce the problem of computing a closed description of $\clos(\ass(\gamma))$ to the problem of computing the closures 
of $\pi_{\{1,j\}}(\ass(\gamma))$, $2 \leq j \leq k$, and each of the latter is a
$2$-dimensional problems which can be solved within our allowed complexity bound using the doubly exponential algorithm referred to previously.

Finally, as in the case of the other algorithms in this paper we include in
the input a triangular Thom encoding $\mathcal{T}$ 
that fixes the first $i$-coordinates, and 
the curve segment representations in the input is over 
$\mathcal{T}$. The computations in the algorithm takes place in the ring
$\D[\ass(\mathcal{T})]$, and in the description given above, the first coordinate
is replaced by the $(i+1)$-st coordinate.
 
\begin{algorithm}[H]
\caption{(Conversion of curve segment representations to closed formulas)}
\label{alg:convert-to-closed}
\begin{algorithmic}[1]
\INPUT
\Statex{
\begin{enumerate}[1.]
\item
A triangular Thom encoding 
$\mathcal{T} = (\mathbf{F},\boldsymbol{\sigma})$ of size $i, 0 \leq i \leq k$;
\item
a curve segment representation $\gamma$ over $\mathcal{T}$.
\end{enumerate}
}
\OUTPUT
\Statex{
\begin{enumerate}[1.]
    \item A finite set of polynomials $\mathcal{Q} \subset \D[\ass(\mathcal{T}][X_{i+1},\ldots,X_k]$;
    \item A $\mathcal{Q}$-closed formula $\Psi$ such that 
    \[
    \RR(\Psi,\R^k)_{\ass(\mathcal{T})}  = \clos(\ass(\gamma)).
    \]
\end{enumerate}
}

\PROCEDURE
\State{$u \gets u(\gamma) = ((f,g_0,g_{i+2},\ldots,g_k), \sigma)$.}
\For{$j=i+2,\ldots, k$}
\label{alg:convert-to-closed:line:1}
\State{Using Algorithm 14.5 (Quantifier Elimination) in \cite[pp. 549]{BPRbook2} with the formula
\[
(\exists T) (f = 0) \wedge \bigwedge_{1 \leq h \leq \deg_T(f)} 
(\sign(f^{(h)}) = \sigma(f^{(h)})) \wedge (X_{j} g_0  - g_j = 0),
\]
as input,
and obtain a 
$\widetilde{\mathcal{Q}}_j$-formula 
quantifier-free formula $\widetilde{\phi}_j$, 
for some $\widetilde{\mathcal{Q}}_j \subset \D[\ass(\mathcal{T})][X_{i+1}, X_j]$ 
describing
such that
$ \RR(\widetilde{\phi}_j) = \pi_{[1,i+1] \cup \{j\}}(\ass(\gamma))$.
\label{alg:convert-to-closed:line:2}
}

\State{Compute a stratifying family of polynomials (see \cite[Proposition 5.40]{BPRbook2} for definition),
$\mathcal{Q}_j \subset \D[\ass(\mathcal{T})][X_{i+1}, X_j]$
containing $\widetilde{\mathcal{Q}}_j$.
}
\label{alg:convert-to-closed:line:3}

\State{Using Algorithm 13.1 (Computing realizable sign conditions) in \cite[pp. 511]{BPRbook2} 
determine the set $\Sigma_j \subset \{0,1,-1\}^{\mathcal{Q}_j}$
of realizable sign conditions of $\mathcal{Q}_j$.}
\label{alg:convert-to-closed:line:4}
\State{Determine $\Theta_j \subset \Sigma_j$ such that
$\pi_{[1,i+1] \cup \{j\}}(\ass(\gamma)) = \bigcup_{\theta \in \Theta_j} \RR(\theta)$.}

\State{$\Psi_j \gets \bigvee_{\theta \in \Theta_j} \overline{\theta}$.}
\label{alg:convert-to-closed:line:5}
\EndFor

\State{$\mathcal{Q} \gets \mathbf{F} \cup \bigcup_{i+2 \leq j \leq k} \mathcal{Q}_j$.} 
\State{$\Psi \gets  \overline{\boldsymbol{\sigma}} \wedge \bigwedge_{i+2 \leq j \leq k} \Psi_j$ (see Notation~\ref{not:weak-Thom encoding} for definition of $\overline{\boldsymbol{\sigma}}$).}
\label{alg:convert-to-closed:line:6}
\State{Output $\mathcal{Q}, \Psi$.}

\COMPLEXITY
The complexity of the algorithm is bounded by $(k - i) D_1^{O(i)} D_2^{O(1)}$, where
$D_1 = \deg(\mathcal{T})$, and $D_2 = \deg(\gamma)$. Moreover, $\card(\mathcal{Q})$
is bounded by $i+ (k - i)D_2^{O(1)}$, and 
the degrees of the polynomials in $\mathcal{Q}$ are bounded by $\max(D_1,D_2^{O(1)})$.
\end{algorithmic}
\end{algorithm} 

\begin{proof}[Proof of Correctness of Algorithm~\ref{alg:convert-to-closed}]
The algorithm reduces the problem to obtaining closed formulas describing the closures of the various $\pi_{i+1,j}(\ass(\gamma)), i+2 \leq j \leq k$ (cf. Line~\ref{alg:convert-to-closed:line:1}). After obtaining the descriptions of
the various $\pi_{i+1,j}(\ass(\gamma))$ using effective quantifier elimination algorithm
(Algorithm 14.5 (Quantifier Elimination) in \cite[pp. 549]{BPRbook2} called in 
Line~\ref{alg:convert-to-closed:line:2}), closed formulas are obtained
describing the closure by computing a stratifying family (cf. \ref{alg:convert-to-closed:line:3}) in each case. The important property of the
stratifying families $\mathcal{Q}_j$ is that the closures, $\clos(\pi_{i+1,j}(\ass(\gamma)))$ are unions of realizations of a set of weak sign conditions on $\mathcal{Q}_j$. This is a consequence of the generalized Thom's Lemma
(see \cite[Proposition 5.39]{BPRbook2}). Finally, the set of weak sign conditions
on $\mathcal{Q}_j$ whose realizations are contained in $\clos(\pi_{i+1,j}(\ass(\gamma)))$ computed using Algorithm 13.1 (Computing realizable sign conditions) in \cite[pp. 511]{BPRbook2} in Line~\ref{alg:convert-to-closed:line:4}.
The disjunction of these weak formulas gives a closed formula, $\Psi_j$ describing
$\clos(\pi_{i+1,j}(\ass(\gamma)))$ (cf. Line~\ref{alg:convert-to-closed:line:5}).
Now Claim~\ref{claim:alg:convert-to-closed} together with Lemma~\ref{lem:Thom}
(and noting that the conjunction of a finite set of closed formulas is also closed),
imply that
the conjunction, $\Psi$, of the formulas $\Psi_j, j=i+2, \ldots,k$ along with the closed formula $\overline{\boldsymbol{\sigma}}$ (Notation~\ref{not:weak-Thom encoding}), describes $\clos(\ass(\gamma))$ (cf. Line~\ref{alg:convert-to-closed:line:6}).
This proves the correctness of the algorithm.
\end{proof}

\begin{proof}[Complexity analysis of Algorithm~\ref{alg:convert-to-closed}]
As before, each arithmetic operation in $\D[\ass(\mathcal{T})]$ costs $D_1^{O(i)}$ arithmetic operations in $\D$ (where $D_1 = \deg(\mathcal{T})$).
There are $(k-i)$ two dimensional projections (cf. Line~\ref{alg:convert-to-closed:line:1}). The complexity of each of these two-dimensional 
sub-problems (measured in terms of number of operations in $\D[\ass(\mathcal{T})]$)
is bounded by $D_2^{O(1)}$, where $D_2 = \deg(\gamma)$, and this follows from the complexity bounds on the various algorithms used in the different steps
(namely, Algorithm 14.5 (Quantifier Elimination) in \cite[pp. 549]{BPRbook2}
in Line~\ref{alg:convert-to-closed:line:2}, 
algorithm for computing stratifying families in \ref{alg:convert-to-closed:line:3}, and
Algorithm 13.1 (Computing realizable sign conditions) in \cite[pp. 511]{BPRbook2} in Line~\ref{alg:convert-to-closed:line:4}). Note that these algorithms are used 
with the number of variables equal to $2$,  and hence the complexity of each call
(measured in terms of arithmetic operations in $\D[\ass(\mathcal{T})]$)
are polynomially bounded in $D_1$. This completes the complexity analysis of 
Algorithm~\ref{claim:alg:convert-to-closed}.
\end{proof}

\subsubsection{Efficient algorithm for computing a simplicial replacement}
As explained in the outline above, in order to obtain a semi-algebraic basis 
we will use an algorithm described in \cite{Basu-Karisani}.
We reproduce below  for the reader's benefit the input, output and complexity
of this algorithm. But before stating these we need some preliminary definitions.

\begin{definition}[Homological $\ell$-equivalence]
\label{def:equivalence-spaces}
We say that a map $f:X \rightarrow Y$ between two topological spaces is a \emph{homological $\ell$-equivalence}
if the induced homomorphisms between the homology groups $f_*: \HH_i(X) \rightarrow \HH_i(Y)$ are isomorphisms for $0 \leq i \leq \ell$.
\end{definition}

The relation of  homological $\ell$-equivalence as defined above is not an equivalence relation since it is not 
symmetric. In order to make it symmetric one needs to ``formally invert'' $\ell$-equivalences.

\begin{definition}[Homologically $\ell$-equivalent]
\label{def:ell-equivalent}
We will say that  \emph{$X$ is homologically $\ell$-equivalent to $Y$}  (denoted $X \sim_\ell Y$), if and only if there exists 
spaces, $X=X_0,X_1,\ldots,X_n=Y$ and homological $\ell$-equivalences  $f_1,\ldots,f_{n}$ as shown below:
\[
\xymatrix{
&X_1 \ar[ld]_{f_1}\ar[rd]^{f_2} &&X_3\ar[ld]_{f_3} \ar[rd]^{f_4}& \cdots&\cdots&X_{n-1}\ar[ld]_{f_{n-1}}\ar[rd]^{f_{n}} & \\
X_0 &&X_2  && \cdots&\cdots &&  X_n&
}.
\]
It is clear that $\sim_\ell$ is an equivalence relation.
\end{definition}

\begin{definition}[Diagrams of topological spaces]
\label{def:diagram-of-spaces}
A \emph{diagram of topological spaces} is a functor, $X:J \rightarrow \Top$, from a small category $J$ to $\Top$. 
\end{definition}

We extend Definition~\ref{def:equivalence-spaces} to diagrams of topological spaces.
We denote by $\Top$ the category of topological spaces.

\begin{definition}[Homological $\ell$-equivalence between diagrams of topological spaces]
\label{def:equivalence-diagrams}
Let $J$ be a small category, and $X,Y: J \rightarrow \Top$ be two functors. 

We will say that  \emph{a diagram $X:J \rightarrow \Top$ is homologically $\ell$-equivalent to the diagram $Y:J \rightarrow \Top$}  (denoted as before by $X \sim_\ell Y$), if and only if there exists diagrams
 $X=X_0,X_1,\ldots,X_n=Y:J \rightarrow \Top$ and homological $\ell$-equivalences  $f_1,\ldots,f_{n}$ as shown below:
\[
\xymatrix{
&X_1 \ar[ld]_{f_1}\ar[rd]^{f_2} &&X_3\ar[ld]_{f_3} \ar[rd]^{f_4}& \cdots&\cdots&X_{n-1}\ar[ld]_{f_{n-1}}\ar[rd]^{f_{n}} & \\
X_0 &&X_2  && \cdots&\cdots &&  X_n&
}.
\]
It is clear that $\sim_\ell$ is an equivalence relation.
\end{definition}

\begin{notation} [Diagram of various unions of a finite number of subspaces]
\label{not:diagram-Delta}
Let $J$ be a finite set, $A$ a topological space, 
and $\mathcal{A} = (A_j)_{j \in J}$ a tuple of subspaces of $A$  indexed by $J$.

For any subset 
$J' \subset J$, we denote 
\begin{eqnarray*}
\mathcal{A}^{J'} &=& \bigcup_{j' \in J'} A_{j'}.
\end{eqnarray*}

We consider $2^J$ as a category whose objects are elements of $2^J$, and whose only morphisms 
are given by: 
\begin{eqnarray*}
2^J(J',J'') &=& \emptyset  \mbox{ if  } J' \not\subset J'', \\
2^J(J',J'') &=& \{\iota_{J',J''}\} \mbox{  if } J' \subset J''.
\end{eqnarray*} 
We denote by $\Simp^J(\mathcal{A}):2^J \rightarrow \Top$ the functor (or the diagram) defined by
\[
\Simp^J(\mathcal{A})(J') = \mathcal{A}^{J'}, J' \in 2^J,
\]
and
$\Simp^J(\mathcal{A})(\iota_{J',J''})$ is the inclusion map $\mathcal{A}^{J'} \hookrightarrow \mathcal{A}^{J''}$.
\end{notation}

\begin{notation}
For $N \in \Z$ we denote by $[n] = \{0,\ldots,N\}$. In particular, $[-1] = \emptyset$. 
\end{notation}

Armed with the definition of homological equivalence of diagrams as defined above
we are finally in a position to state the specifications of the simplicial replacement
algorithm described in \cite{Basu-Karisani}.

\begin{algorithm}[H]
\caption{(Simplicial replacement)}
\label{alg:simplicial-replacement}
\begin{algorithmic}[1]
\INPUT
\Statex{
\begin{enumerate}[1.]
\item
A finite set of polynomials $\mathcal{P} \subset \D[X_1,\ldots,X_k]$;
\item
an integer $N \geq 0$, and for each $i \in [N]$, a $\mathcal{P}$-closed formula $\phi_i$;
\item 
$\ell, 0 \leq \ell \leq k$.
\end{enumerate}
 }
 \OUTPUT
 \Statex{
A simplicial complex $\Delta$ and for each $I \subset [N]$ a subcomplex $\Delta_I \subset \Delta$ such that there is a diagrammatic homological $\ell$-equivalence
\[
(I \mapsto \Delta_I)_{I \subset [N]} \overset{h}{\sim}_\ell \Simp^{[N]}(\RR(\Phi)),
\]
where $\Phi(i) = \phi_i, i \in [N]$.
}
\COMPLEXITY
The complexity of the algorithm is 
bounded by $(sd)^{k^{O(\ell)}}$, where $s =\card(\mathcal{P})$ and $d = \max_{P \in \mathcal{P}} \deg(P)$.
\end{algorithmic}
\end{algorithm}

\begin{notation}
\label{not:graph}
A finite  (directed)  graph $G$ is a tuple $(V(G), E(G), \mathrm{head}, \mathrm{tail})$,
where $V(G), E(G)$ are finite sets and $\mathrm{head}, \mathrm{tail}: E(G) \rightarrow V(G)$ are maps. To every finite graph $G$ there is an one-dimensional regular cell complex 
associated naturally to it. We will denote this cell complex by $|G|$.
\end{notation}

\subsubsection{Algorithm for computing basis of $\HH_1(S)$}
We are now in a position to describe the algorithm that will accomplish
Step~\ref{itemlabel:prob-H1:2} of our algorithm for computing a semi-algebraic basis
of the first homology of a given closed semi-algebraic set.

The outline of the algorithm is as follows. We first replace the given closed semi-algebraic set by one that is closed and bounded and which is homologically
equivalent to the given set. This is accomplished in Lines~
\ref{alg:basis:line:1}, \ref{alg:basis:line:2} and \ref{alg:basis:line:3} using Algorithm~\ref{alg:big-enough-radius} (Big enough radius).
We then use Algorithm~\ref{alg:surjection} (Computing one-dimensional subset) 
to compute a one-dimensional subset
$\Gamma$ (described in terms of curve-segment representations and real univariate representations) having the property that $\HH_q(S,\Gamma) = 0$ for $q=0,1$, where $S$ is the given semi-algebraic set. This is accomplished in Line~\ref{alg:basis:line:4}.
We then use Algorithm~\ref{alg:convert-to-closed} (Conversion of curve segment representations to closed formulas)  in Line~\ref{alg:basis:line:5}
to convert the description of $\Gamma$ obtained in the last step into a formula.
In Line~\ref{alg:basis:line:6},
we extract the underlying structure of a combinatorial graph $G$ from $\Gamma$ and 
compute a set of simple cycles which spans the cycle space of $G$.
Finally in Line~\ref{alg:basis:line:7}, we use Algorithm~\ref{alg:simplicial-replacement} (Simplicial replacement)  and Gauss-Jordan elimination to compute 
a minimal subset of the simple cycles computed in the previous step which span $\HH_1(S)$, and these form a semi-algebraic basis of $\HH_1(S)$.

\begin{algorithm}[H]
\caption{(Computing homology basis)}
\label{alg:basis}
\begin{algorithmic}[1]
\INPUT
\Statex{
\begin{enumerate}[1.]
\item
a finite set $\mathcal{P} \subset \D[X_1,\ldots,X_k]$;
\item
a $\mathcal{P}$-closed formula $\Phi$.
\end{enumerate}
}

\OUTPUT
 \Statex{
 \begin{enumerate}[1.]
    \item
    a finite set $\mathcal{Q} \subset \D[X_1,\ldots,X_k]$;
    \item a finite tuple $(\Psi_j)_{j \in J}$, in which each $\Psi_j$ is a $\mathcal{Q}$-formula, such that
the realizations $\Gamma_j = \RR(\Psi_j,\R^k)$ have the following properties:
\begin{enumerate}[(a)]
\item For each $j \in J$, 
$\Gamma_j \subset S$ and  $\Gamma_j$ is semi-algebraically homeomorphic to 
$\Sphere^1$;
\item the inclusion map $\Gamma_j \hookrightarrow S$ induces an injective map
$\FF \cong \HH_1(\Gamma_j) \rightarrow \HH_1(S)$, whose image we denote by $[\Gamma_j]$;
\item
the tuple $([\Gamma_j])_{j \in J}$ forms a basis of $\HH_1(S)$.
 \end{enumerate}
 \end{enumerate}
 }

\PROCEDURE
\State{Use Algorithm~\ref{alg:big-enough-radius} (Big enough radius) with input $(\mathcal{P},\Phi)$ and let
$r = \frac{a}{b} > 0, a,b\in \D$ be the output}.
\label{alg:basis:line:1}
\State{$\mathcal{P} \gets \mathcal{P} \cup \{b^2(X_1^2 + \cdots + X_k^2) - a^2\}$.}
\label{alg:basis:line:2}
\State{$\Phi \gets \Phi \wedge (b^2(X_1^2 + \cdots + X_k^2) - a^2 \leq 0)$.}
\label{alg:basis:line:3}

\State{Call Algorithm~\ref{alg:surjection} (Computing one-dimensional subset)  with input $(\mathcal{P},\Phi)$ to obtain
\begin{enumerate}[(a)]
    \item a finite set $\mathcal{U}$ of real univariate representations over $\mathcal{T}$;
    \item a finite indexing set $I$ and
     a finite tuple $(\gamma_j)_{j \in I}$ where each $\gamma_j$  a curve segment representation over $\mathcal{T}$;
    \item mappings $L,R:I \rightarrow \mathcal{U}$, defined by $\ass(L(j))$ is the left endpoint of $\gamma_j$, and $\ass(R(j))$ is the right endpoint of $\gamma_j$.
 \end{enumerate}
} 
\label{alg:basis:line:4}

\State{Using Algorithm~\ref{alg:convert-to-closed} (Conversion of curve segment representations to closed formulas)  compute for each $j \in I$
a set of polynomials $\mathcal{Q}_j$ and a 
$\mathcal{Q}_j$-closed
formula
$\Theta_j$ such that
$\RR(\Theta_j) = \clos(\ass(\gamma_j))$.}
\label{alg:basis:line:5}
\State{$\mathcal{Q} \gets \bigcup_{j \in I} \mathcal{Q}_j$.}

\State{$G \gets (E=I,V = \mathcal{U},\head= L, \tail = R)$.}
\State{Using a graph traversal algorithm compute a tuple $(C_1,\ldots,C_N)$ where each 
$C_h= (i_{h,0},\ldots,i_{h,q_h-1}) \in I^{q_h}$ and  
\begin{enumerate}[(a)]
    \item $\tail(i_{j,h}) = \head(i_{j,h+1 \mod  q_j}), h = 0,\ldots,q_j -1$.
    \item $C_1, \ldots,C_N$ are simple cycles of $G$.
    \item The cycles $C_1, \ldots, C_N$ form a basis of the cycle space of $G$ (which is isomorphic to $\HH_1(|G|)$).
\end{enumerate}
}
\label{alg:basis:line:6}

\algstore{myalg}
\end{algorithmic}
\end{algorithm}
 
\begin{algorithm}[H]
\begin{algorithmic}[1]
\algrestore{myalg}

\For {$1 \leq h \leq N$} 
    \State{$\Psi_h \gets \Theta_{i_{h,0}} \vee \cdots \vee \Theta_{i_{h,q_h-1}}$.}    
\EndFor
\State{Call Algorithm~\ref{alg:simplicial-replacement} (Simplicial replacement)  with input:
\begin{enumerate}[1.]
\item
$\mathcal{Q}$,
\item the tuple of $\mathcal{Q}$-closed formulas
$\boldsymbol{\Phi} = (\phi_0,\ldots,\phi_N) = (\Psi_1,\ldots,\Psi_N,\Phi)$,
\item
$\ell =1$
\end{enumerate}
to obtain
a simplicial complex $\Delta_{1}(\boldsymbol{\Phi})$ such that 
  \[
  (J \mapsto |\Delta_{1}(\boldsymbol{\Phi}|_{J})|)_{J \subset [N]} 
  \]
  is homologically $1$-equivalent (cf. Definition~\ref{def:ell-equivalent})  to 
  $\Simp^{[N]}(\RR(\boldsymbol{\Phi}))$ (cf. Notation~\ref{not:diagram-Delta}).
}

\State{
Using Gauss-Jordan elimination identify a minimal subset  $J \subset \{1,\ldots,N \}$
such that the $\mathrm{span}(\{ \mathrm{Im}(\HH_1(\Delta_{1}(\boldsymbol{\Phi}|_{\{h\}})) \rightarrow 
\HH_1(\Delta_{1}(\boldsymbol{\Phi}))) \;\vert\; h \in J\}) = \HH_1(\Delta_{1}(\boldsymbol{\Phi}))$.
}
\label{alg:basis:line:7}
\State{Output $(\Psi_h)_{h \in J}$.}

\COMPLEXITY{ The complexity of each $\Psi_h, h \in J$ is bounded by $(s d)^{O(k^2)}$, and the complexity of the algorithm  is bounded by $(s d)^{k^{O(1)}}$, where $s = \card(\mathcal{P})$ and $d = \max_{P \in \mathcal{P}} \deg(P)$.}
\end{algorithmic}
\end{algorithm}

\begin{proof}[Proof of correctness of Algorithm~\ref{alg:basis}]
The correctness of Algorithm~\ref{alg:basis} follows from the correctness
of Algorithms~\ref{alg:big-enough-radius}, \ref{alg:modified-curve-segment}, \ref{alg:convert-to-closed}, and \ref{alg:simplicial-replacement}.
\end{proof}

\begin{proof}[Complexity analysis of Algorithm~\ref{alg:basis}]
The complexity upper bound is a consequence  of the complexity analysis of the  Algorithms~\ref{alg:big-enough-radius}, \ref{alg:modified-curve-segment}, \ref{alg:convert-to-closed}, and \ref{alg:simplicial-replacement}.
\end{proof}

\begin{proof}[Proof of Theorem~\ref{thm:main}]
Theorem~\ref{thm:main} follows from the correctness and complexity analysis of 
Algorithm~\ref{alg:basis}.
\end{proof}

\begin{proof}[Proof of Theorem~\ref{thm:basic}]
Theorem~\ref{thm:basic} follows from the correctness and complexity analysis of 
Algorithms~\ref{alg:surjection} (Computing one-dimensional subset) and \ref{alg:convert-to-closed} (Conversion of curve segment representations to closed formulas).
\end{proof}

\section{Conclusion and open problems}
In this paper we have proved the existence of an algorithm with singly exponential complexity for obtaining a semi-algebraic basis of the first homology group of 
of a given closed semi-algebraic set (generalizing existing algorithm in the case of  the zero-th
dimensional homology). One obvious open problem is to generalize this result to
the higher homology groups. As an intermediate problem we can ask for the solution
to Conjecture~\ref{conj:basic}. The techniques developed in the current paper
may possibly generalize to prove Conjecture~\ref{conj:basic}, but there are 
formidable technical problems to overcome and we leave this to future work.

\bibliographystyle{amsplain}
\bibliography{master}
 
\end{document}